 \newtheorem{thm}{Theorem}[section]
 \newtheorem{cor}[thm]{Corollary}
 \newtheorem{lem}[thm]{Lemma}
 \newtheorem{prop}[thm]{Proposition}
 \theoremstyle{definition}
 \newtheorem{defn}[thm]{Definition}
 \theoremstyle{remark}
 \newtheorem{rem}{Remark}
 \newtheorem{example}{Example}[section]
 \numberwithin{equation}{section}
 \DeclareMathOperator{\IM}{Im}
 \newcommand{\Hom}{\mathrm{Hom}}
 \newcommand{\Z}{\mathbb{Z}}
 \def\H{\text{Hom}}
\begin{document}
\title[Small covers and the bordism classification of 2-torus manifolds]
 {Small covers and the equivariant bordism classification of 2-torus manifolds}

\author{Zhi L\"u and Qiangbo Tan}

\address{School of Mathematical Sciences, Fudan University \\ Shanghai,
200433, People's Republic of China.}
 \email{zlu@fudan.edu.cn}
 \address{School of Mathematical Sciences \\ Fudan University \\ Shanghai,
200433, People's Republic of China.}
 \email{081018011@fudan.edu.cn}
\thanks{This work is supported by grants from NSFC (No. 10931005),  Shanghai NSF (No. 10ZR1403600) and RFDP (No. 20100071110001).}
\subjclass[2000]{Primary  55N22, 57R85, 13A02, 57S17;  Secondary
05C15, 52B11.} \keywords{Small cover, tom
Dieck--Kosniowski--Stong localization theorem, 2-torus manifold,
bordism, colored graph.}


\begin{abstract}
Associated with the Davis--Januszkiewicz theory of small covers,
this paper deals with the theory of 2-torus manifolds from the
viewpoint of equivariant bordism. We define a differential operator
on the ``dual'' algebra of the unoriented $G_n$-representation
algebra introduced by Conner and Floyd,  where $G_n=(\Z_2)^n$. With
the help of $G_n$-colored graphs (or mod 2 GKM graphs), we may use
this differential operator to give a very simple description of tom
Dieck--Kosniowski--Stong localization theorem in the setting of
2-torus manifolds. We then apply this to study  the
$G_n$-equivariant unoriented bordism classification of
$n$-dimensional 2-torus manifolds. We show that the
$G_n$-equivariant unoriented bordism class of each $n$-dimensional
2-torus manifold contains an $n$-dimensional small cover as its
representative, solving the conjecture posed in \cite{l1}. In
addition, we also obtain that the graded noncommutative ring  formed
by the equivariant unoriented bordism classes of  2-torus manifolds
of all possible dimensions is generated by the classes of all
generalized real Bott manifolds (as special small covers over the
products of simplices). This gives a strong connection between the
computation of $G_n$-equivariant bordism groups or ring and the
Davis--Januszkiewicz theory of small covers. As a computational
application, with the help of computer,  we completely determine the
structure of the group formed by equivariant bordism classes of all
4-dimensional 2-torus manifolds. Finally, we give some essential
relationships among 2-torus manifolds, coloring polynomials, colored
simple convex polytopes, colored graphs.
 \end{abstract}

\maketitle

\section{Introduction}\label{introduction}
Throughout this paper, assume that $G_n=({\Bbb Z}_2)^n$ is the mod
2-torus group of rank $n>0$. An $n$-dimensional {\em 2-torus
manifold} is a smooth closed $n$-dimensional (not necessarily
oriented) manifold equipped with an  effective smooth  $G_n$-action,
so its  fixed point set  is empty or consists
 of a set of isolated points (see \cite{l1, lm}).
 The seminal work of Davis and Januszkiewicz in
\cite{dj} discussed a special kind of  2-torus manifolds, so called
{\em small covers} (as the topological versions of real toric
varieties), each of which admits a locally standard $G_n$-action
such that the orbit space of action is homeomorphic to a simple
convex polytope. Small covers provide a strong link between
equivariant topology, polytope theory and combinatorics. This paper
will deal with the theory of 2-torus manifolds from the viewpoint of
equivariant bordism. We still expect such a link like small covers
although 2-torus manifolds form a much wider class than small
covers.

\vskip .1cm

In 1960s, Conner and Floyd began with the study of the equivariant
bordism theory of smooth closed $G_n$-manifolds (\cite{cf, co}).
They introduced and studied a graded commutative algebra  over
${\Bbb Z}_2$  with unit, $\mathcal{Z}_*(G_n)=\sum_{m\geq
0}\mathcal{Z}_m(G_n)$, where $\mathcal{Z}_m(G_n)$ consists of
$G_n$-equivariant unoriented bordism  classes of all smooth closed
$m$-dimensional manifolds $M^m$ with smooth  $G_n$-actions fixing a
finite
 set, and  the addition and the multiplication on $\mathcal{Z}_*(G_n)$ are
 defined by the disjoint union and the cartesian
 products with diagonal $G_n$-actions of $G_n$-manifolds, respectively.
 We note that $\mathcal{Z}_m(G_n)$ has also  a
$\Z_2$-linear space structure. 
 As stated in \cite[p.75]{cf} and \cite[p.107]{co},
 an action of  $G_n$ on $M^m $  is equivalent to a collection of involutions
 $T_i:M^m\longrightarrow M^m, i=1,..., n$, with $T_iT_j=T_jT_i$, which means that $G_n$ is
 generated by $T_1, ..., T_n$. We also require the $G_n$-action to be effective.
 Thus, $\mathcal{Z}_n(G_n)$ consists of  equivariant unoriented bordism classes of
all $n$-dimensional 2-torus manifolds, also denoted by
$\mathfrak{M}_n$ in \cite{l1}.

\begin{rem} Conner and Floyd showed in \cite{cf, co} that
when $n=1$ $\mathcal{Z}_*(\Z_2)\cong\Z_2$, and when $n=2$,
$\mathcal{Z}_*((\Z_2)^2)\cong\Z_2[u]$ where $u$ denotes the class of
${\Bbb R}P^2$ with the standard  $(\Z_2)^2$-action. When $n=3$, the
group structure of $\mathcal{Z}_3((\Z_2)^3)=\mathfrak{M}_3$ was
determined in \cite{l1} (see also \cite{ly}), and it was also shown
therein that $\dim_{\Z_2}\mathcal{Z}_3((\Z_2)^3)=13$. However, as
far as authors know, when $n\geq 3$, the ring structure of
$\mathcal{Z}_*(G_n)$ is still open, and the group structure of
$\mathcal{Z}_n(G_n)$ with $n>3$ is also so.
\end{rem}

With the above understood, now our purpose  in this paper can be
equivalently restated as follows: we shall concentrate on the study
of $\mathcal{Z}_n(G_n)$ as a group generated by equivariant
unoriented bordism classes of all $n$-dimensional 2-torus manifolds.
Given a nonzero class $\{M^n\}$ in $\mathcal{Z}_n(G_n)$, we know  by
classical results of Stong \cite{s} that the fixed point set of
$M^n$ is a nonempty finite set,  and the collection of the
associated tangential $G_n$-representations at the fixed points,
which is considered as a squarefree homogenous polynomial of degree
$n$ in $\mathcal{R}_*(G_n)$, determines the class $\{M^n\}$,  where
$\mathcal{R}_*(G_n)=\sum_{m\geq 0}\mathcal{R}_m(G_n)$ is the graded
polynomial algebra
 over $\Z_2$ generated by all 
irreducible $G_n$-representations. In other words, there is a
monomorphism $\phi_n:\mathcal{Z}_n(G_n)\longrightarrow
\mathcal{R}_n(G_n)$ (see also Subsection~\ref{localization}). We
shall consider the following questions:
\begin{enumerate}
\item[(Q1)] What squarefree homogenous polynomials in $\mathcal{R}_n(G_n)$ arise
as fixed point data of 2-torus manifolds?
\item[(Q2)] Are there preferred representatives in the equivariant unoriented bordism
classes of $\mathcal{Z}_n(G_n)$? With respect to this question, the
following concrete conjecture was posed in \cite{l1}.
\end{enumerate}
\noindent {\bf Conjecture ($\star$):}  Each class of
$\mathcal{Z}_n(G_n)$ contains a small cover as its representative.

\vskip .1cm

 On (Q1), there is a theoretical answer stemming from
the tom Dieck--Kosniowski--Stong localization theorem in terms of an
integrality condition for the tangential fixed point data, which
applies to smooth closed $G_n$-manifolds of an arbitrary dimension
$m$ (not necessarily equal to $n$) with finite fixed point set. In
the special case of $m=n$ (i.e., 2-torus manifolds), we shall
formulate a simple criterion in terms of the vanishing of a
differential on the dual of the given squarefree homogenous
polynomial (see Theorem~\ref{main result}). This description is
based upon the consideration of $G_n$-colored graphs or mod 2 GKM
graphs, which was introduced and studied in \cite{bl, bgh, l2, l3}.
Furthermore, we may use this simple criterion to consider (Q2). We
show  that each $n$-dimensional 2-torus manifold is
$G_n$-equivariantly bordant to an $n$-dimensional small cover,
giving an affirmative answer to Conjecture ($\star$) (see
Theorem~\ref{conj}). As shown in \cite{cms}, a small cover over a
product of simplices is a generalized real Bott manifold. We also
show that as a graded noncommutative  ring,
 $\mathfrak{M}_*=\sum_{n\geq 1}\mathcal{Z}_n(G_n)$ is generated by the classes of all
generalized real Bott manifolds (see Theorem~\ref{ring}), where the
multiplication on $\mathfrak{M}_*$ is defined by the cartesian
product of actions (see Subsection~\ref{state} for details). This
provides a strong link between the computation of $G_n$-equivariant
bordism groups or ring and the Davis--Januszkiewicz theory of small
covers.
 As a
computational application, we use a computer program to get that if
$n=4$, then the $\Z_2$-linear space $\mathcal{Z}_4(G_4)$ produced by
equivariant bordism classes of all 4-dimensional 2-torus manifolds
has dimension 510 (see Proposition~\ref{dim-4}).

\vskip .1cm This paper is organized as follows. We shall review the
tom Dieck--Kosniowski--Stong localization theorem and give the
statements of main results in Section~\ref{sec2}.  In
Section~\ref{dual-diff} we introduce the notions of faithful
polynomials and its dual polynomials, and give the definition of the
differential operator $d$ on the free polynomial algebra
$\Z_2[\widehat{\Hom(\Z_2, G_n)}]$. In Section~\ref{colored
graph-small cover} we review the basic theories of colored graphs
and small covers. In particular,  we also discuss the
decomposability of $G_n$-colored simple convex polytopes. In
Section~\ref{proof of main} we give the proof of Theorem~\ref{main
result}. In Section~\ref{app-1} we introduce the linear spaces
$\mathcal{V}_n$ and $\mathcal{V}_n^*$, and then use them to study
the structure of $\mathfrak{M}_*$ and to finish the proofs of
Theorems~\ref{conj}--\ref{ring}. In Section~\ref{app-2} we give a
summary on some essential relationships among 2-torus manifolds,
coloring polynomials, colored simple convex polytopes, colored
graphs, and also pose some problems. In Section~\ref{re-proof},  for a local completeness we give a simple  proof to show how the  tom Dieck--Kosniowski--Stong localization theorem follows from the existence theorem of tom Dieck.
Finally we give an  algorithm
of determining  a basis  of $\mathcal{V}_4^*$ in
Section~\ref{computer}.

\section{tom Dieck--Kosniowski--Stong localization theorem and
statements of main results}\label{sec2}

\subsection{tom Dieck--Kosniowski--Stong localization theorem}
\label{localization}

Following~\cite{cf, co}, let $\mathcal{R}_m(G_n)$ be the linear
space over ${\Bbb Z}_2$,
 generated by the isomorphism classes of $m$-dimensional real $G_n$-representations.
 Then $\mathcal{R}_*(G_n)=\sum_{m\geq 0}\mathcal{R}_m(G_n)$ becomes a graded
 commutative algebra over ${\Bbb Z}_2$ with unit,
  called the {\em Conner--Floyd unoriented $G_n$-representation algebra} here,
   where the multiplication is given by
  the direct sum of representations, and the unit is given by the representation of degree 0.
  As pointed out in \cite{co},  $\mathcal{R}_*(G_n)$ is not the Grothendieck ring of
  $G_n$-representations, an entirely different concept.
  It is well-known that each irreducible real $G_n$-representation is one-dimensional,
  so $\mathcal{R}_*(G_n)$ is also the
  graded polynomial algebra over $\Z_2$ generated by the  isomorphism classes of
  one-dimensional irreducible
  real $G_n$-representations. There is the following essential relation between
  $\mathcal{Z}_*(G_n)$ and $\mathcal{R}_*(G_n)$, due to Stong \cite{s} (see also \cite{co} and \cite{ms}).
  \begin{thm}[Stong]\label{stong} The map
$$\phi_*: \mathcal{Z}_*(G_n)\longrightarrow \mathcal{R}_*(G_n)$$ defined
 by $\{M\}\longmapsto \sum_{p\in M^{G_n}}[\tau_pM]$ is
a monomorphism as algebras over ${\Bbb Z}_2$ where $\tau_pM$ denotes
the real $G_n$-representation on the tangent space at $p\in
M^{G_n}$.
\end{thm}
Thus, $\mathcal{Z}_*(G_n)$ is identified with a subalgebra $\IM
\phi_*$ of $\mathcal{R}_*(G_n)$, also denoted by
$\mathcal{S}_*(G_n)$ in~\cite{cf, co}.

\begin{rem}
Given a class $\{M^m\}$ in $\mathcal{Z}_m(G_n)$, as mentioned in
Section~\ref{introduction}, the action of $G_n$ on $M^m$ is
effective. If the fixed point set $M^{G_n}$ of the $G_n$-action on
$M^m$ is empty, then $\{M^m\}=0$ in $\mathcal{Z}_m(G_n)$ by a result
of Stong~\cite{s} or \cite[Theorem 31.2]{co}. Thus, if $\{M^m\}$ is
nonzero in $\mathcal{Z}_m(G_n)$, then  $m\geq n$ since the action of
$G_n$ on $M^m$ is effective. This implies that if $m<n$, then
$\mathcal{Z}_m(G_n)$ would be trivial. It should be pointed out that
in~\cite{co}, the class of a single point with a $G_n$-action is
used as  the unit  in $\mathcal{Z}_*(G_n)$, which represents the
generator of $\mathcal{Z}_0(G_n)$ so $\mathcal{Z}_0(G_n)\cong \Z_2$.
This enriches the algebraic structure of $\mathcal{Z}_*(G_n)$, and
does not bring any essential influence on the study of
$\mathcal{Z}_*(G_n)$ although a $G_n$-action on a single point is
trivial. In this paper we use this convention in \cite{co}.
\end{rem}

In \cite{d}, tom Dieck showed that the equivariant unoriented
bordism class of a smooth closed $G_n$-manifold is completely
determined by its equivariant Stiefel--Whitney characteristic
numbers, and in particular, the existence of a $G_n$-manifold $M^m$
fixing a finite set can be characterized by the integral property of
its fixed point data (see \cite[Theorem 6]{d}). Later on, Kosniowski and
Stong \cite{ks} gave a more precise localization formula for the
characteristic numbers of $M^m$ in terms of the fixed point data. Then the
existence theorem of tom Dieck can be  formulated  into  the
following localization theorem in terms of Kosniowski and Stong's
localization formula (see \cite[\S5 of p.740]{ks}). Here for a  completeness, we shall give a simple proof for how the following theorem follows from the existence theorem of tom Dieck in Section~\ref{re-proof} of this paper.
\begin{thm}  [tom Dieck--Kosniowski--Stong localization theorem]\label{dks}
Let $\{\tau_1, ..., \tau_l\}$ be a collection of faithful
$G_n$-representations in $\mathcal{R}_m(G_n)$. Then a necessary and
sufficient condition that $\tau_1+\cdots+\tau_l\in \IM\phi_m$ $($or
$\{\tau_1, ..., \tau_l\}$ is the fixed point data of a
$G_n$-manifold $M^m)$ is that for all symmetric polynomial functions
$f(x_1,...,x_m)$ over ${\Bbb Z}_2$,
\begin{equation} \label{formula-tks1}\sum_{i=1}^l{{f(\tau_i)}\over{\chi^{G_n}(\tau_i)}}\in
H^*(BG_n;{\Bbb Z}_2)\end{equation}  where $\chi^{G_n}(\tau_i)$
denotes the equivariant Euler class of $\tau_i$, which is a product
of $m$ nonzero elements of $H^1(BG_n;{\Bbb Z}_2)$, and $f(\tau_i)$
means that variables $x_1,...,x_m$ in the function $f(x_1,...,x_m)$
are replaced by those $m$ degree-one factors in
$\chi^{G_n}(\tau_i)$.
\end{thm}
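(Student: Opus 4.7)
The plan is to prove necessity by applying equivariant localization on the Borel construction, and sufficiency by invoking tom Dieck's integrality criterion for equivariant unoriented cobordism together with Kosniowski--Stong's fixed-point formula.

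For necessity, I assume $\tau_1+\cdots+\tau_l=\phi_m(\{M\})$ for some smooth closed $G$-manifold $M^m$ with $M^G=\{p_1,\ldots,p_l\}$ and $\tau_{p_i}M\cong\tau_i$. Every equivariant Stiefel--Whitney number of $M$ is the equivariant pushforward to $H^*(BG;\Z_2)$ of a class $f(w_1^G(M),\ldots,w_m^G(M))\in H^*_G(M;\Z_2)$, where $f$ is a symmetric polynomial. Applying the mod 2 Atiyah--Bott--Berline--Vergne (Quillen) localization formula at the finite fixed set $M^G$ evaluates this pushforward as $\sum_{i=1}^{l}f(\chi^G(\tau_i))/\chi^G(\tau_i)$, which therefore lies in $H^*(BG;\Z_2)$. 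This is precisely (\ref{formula-tks1}).

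For sufficiency, I will invoke tom Dieck's theorem \cite[Theorem 6]{d}: a formal sum $\tau_1+\cdots+\tau_l$ of representations without trivial summands is realized as the fixed-point datum of some smooth closed $G$-manifold---equivalently lies in $\IM\phi_m$---if and only if all of its universal equivariant characteristic numbers lie in $H^*(BG;\Z_2)$ rather than only in the localized ring. Combining this with the explicit computation of Kosniowski--Stong \cite{ks}, which identifies these characteristic numbers exactly with the expressions $\sum_{i=1}^l f(\chi^G(\tau_i))/\chi^G(\tau_i)$ as $f$ ranges over symmetric polynomials, the assumed integrality (\ref{formula-tks1}) for all $f$ immediately forces $\tau_1+\cdots+\tau_l\in\IM\phi_m$.

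The nontrivial direction is sufficiency, and its difficulty lies outside the localization formula itself. It hinges on two ingredients: first, tom Dieck's result that equivariant Stiefel--Whitney numbers form a \emph{complete} invariant for equivariant unoriented cobordism, so that integrality of all such numbers actually produces a realizing $G$-manifold; second, the Kosniowski--Stong reduction showing that one need only test against symmetric polynomials in the fixed-point weights rather than arbitrary equivariant characteristic classes. Once these two facts are in hand the equivalence follows at once from the localization identity used above, so no further case analysis should be required.
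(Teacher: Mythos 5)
Your proposal is correct and follows essentially the same route as the paper, which itself derives Theorem~\ref{dks} by combining tom Dieck's integrality criterion \cite[Theorem 6]{d} with the Kosniowski--Stong fixed-point formula \cite{ks}, exactly the two ingredients you cite (the paper's closing remark of Section~1 makes the same identification of \eqref{formula-tks1} with equivariant Stiefel--Whitney numbers via localization). No gap; the paper offers no more detailed argument than yours.
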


\begin{rem}
  Although all elements of $\IM \phi_*$ can be characterized by the formula (\ref{formula-tks1}),
  it is still
quite difficult to determine the algebra structure of $\IM
\phi_*\cong \mathcal{Z}_*(G_n)$. Also, in Theorem~\ref{dks}, if
$\{\tau_1, ..., \tau_l\}$ is the fixed data of a  $G_n$-manifold
$M^m$, then the polynomial (\ref{formula-tks1}) is exactly an
equivariant Stiefel--Whitney number of $M^m$. Actually, if we
formally write the equivariant total Stiefel--Whitney class of the
tangent bundle $\tau M^m$ as $w^{G_n}(\tau
M^m)=\prod_{i=1}^m(1+x_i)$, then the equivariant Stiefel--Whitney
number $f(x_1, ..., x_m)[M^m]$ can be calculated by the  formula
$$f(x_1, ..., x_m)[M^m]=\sum_{i=1}^l{{f(\tau_i)}\over{\chi^{G_n}(\tau_i)}}\in H^*(BG_n;\Z_2)$$
where $[M^m]$
denotes the fundamental homology class of $M^m$.  For more details,
see \cite{d} and \cite{ks}.
\end{rem}

\subsection{Statements of main results}\label{state}
Now let $\Hom(G_n,\Z_2)$ (resp. $\Hom(\Z_2, G_n)$) denote the set of
all homomorphisms $G_n\longrightarrow\Z_2$ (resp.
$\Z_2\longrightarrow G_n$). Then both $\Hom(G_n,\Z_2)$ and
$\Hom(\Z_2, G_n)$ have natural abelian group structures given by
those of $\Z_2$ and $G_n$ in the usual  way (i.e., the addition is
given by $(\rho+\xi)(g)=\rho(g)+\xi(g)$) and they have also linear
space structures over $\Z_2$.
Let $\Z_2[\Hom(G_n, \Z_2)]$ (resp. $\Z_2[\Hom(\Z_2, G_n)]$)  be the
graded polynomial algebra over the linear space $\Hom(G_n,\Z_2)$
(resp. $\Hom(\Z_2, G_n)$), i.e., the infinite symmetric tensor
algebra over $\Hom(G_n,\Z_2)$ (resp. $\Hom(\Z_2, G_n)$).
 Since both $\Hom(G_n,\Z_2)$ and  $\Hom(\Z_2, G_n)$ are isomorphic to $H^1(BG_n;\Z_2)$
 as linear spaces,
we have that  $$\Z_2[\Hom(G_n, \Z_2)] \cong \Z_2[\Hom(\Z_2, G_n)]
\cong H^*(BG_n;\Z_2)$$ as algebras.

\vskip .1cm

 On the other hand, it is well-known that all irreducible
real $G_n$-representations bijectively correspond to all  elements
in $\H(G_n,{\Bbb Z}_2)$, where every irreducible real representation
of $G_n$ has the form $\lambda_\rho: G_n\times{\Bbb
R}\longrightarrow{\Bbb R}$ with $\lambda_\rho(g,x)=(-1)^{\rho(g)}x$
for $\rho\in\H(G_n,{\Bbb Z}_2)$, and $\lambda_\rho$ is trivial if
$\rho(g)=0$ for all $g\in G_n$. Thus, if by $\Z_2[\widehat{\Hom(G_n,
\Z_2)}]$ we denote the free polynomial algebra
 over $\widehat{\Hom(G_n, \Z_2)}$,  then $\Z_2[\widehat{\Hom(G_n, \Z_2)}]$ can be identified with
 $\mathcal{R}_*(G_n)$, where $\widehat{\Hom(G_n, \Z_2)}$  means the set obtained
 by forgetting the algebraic structure on $\Hom(G_n, \Z_2)$.
 Similarly, we may define $\Z_2[\widehat{\Hom(\Z_2, G_n)}]$ in the same way as
 $\Z_2[\widehat{\Hom(G_n, \Z_2)}]$.
 We note that $\Z_2[\widehat{\Hom(G_n, \Z_2)}]$ is generated by $2^n$  elements of
  $\widehat{\Hom(G_n, \Z_2)}$, while $\Z_2[\Hom(G_n, \Z_2)]$ is generated by a basis
  (containing $n$ elements) of
   $\Hom(G_n, \Z_2)$.
\vskip .1cm

 In a certain sense, both $\Hom(G_n,\Z_2)$ and $\Hom(\Z_2, G_n)$ are dual to each other.
Thus, given a faithful $G_n$-polynomial $g=\sum_i t_{i,1}\cdots
t_{i, n}$ in $\Z_2[\widehat{\Hom(G_n, \Z_2)}]$ (which means that for
each monomial $t_{i,1}\cdots t_{i, n}$ of $g$, the set $\{t_{i,1},
..., t_{i, n}\}$ is a basis of $\Hom(G_n,\Z_2)$), we can obtain a
unique dual $G_n$-polynomial $g^*$ in $\Z_2[\widehat{\Hom(\Z_2,
G_n)}]$ (see also Subsection~\ref{dual poly}).  In
Subsection~\ref{operator} we shall define a differential operator
$d$ on $\Z_2[\widehat{\Hom(\Z_2, G_n)}]$. Identifying
$\Z_2[\widehat{\Hom(G_n, \Z_2)}]$  with
 $\mathcal{R}_*(G_n)$, we may regard $\IM \phi_*$  as a subring of $\Z_2[\widehat{\Hom(G_n, \Z_2)}]$.
Then the following result gives
another characterization of $g\in \IM \phi_n$ in terms of $d(g^*)$.

\begin{thm}\label{main result}
Let $g=\sum_i t_{i,1}\cdots t_{i, n}$ be a faithful $G_n$-polynomial
in $\Z_2[\widehat{\Hom(G_n, \Z_2)}]$. Then
 $g\in \IM \phi_n$ if and only if $d(g^*)=0$.
\end{thm}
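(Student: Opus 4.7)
My plan is to derive Theorem~\ref{main result} from the tom Dieck--Kosniowski--Stong localization theorem (Theorem~\ref{dks}) by translating the integrality condition~(\ref{formula-tks1}) into the dual algebra $\Z_2[\widehat{\Hom(\Z_2, G)}]$ and identifying it with the vanishing $d(g^*)=0$. The duality sending a basis monomial $t_{i,1}\cdots t_{i,n}$ of $g$ to its dual basis monomial in $g^*$ is well defined precisely because $g$ is faithful, so the monomials of $g^*$ encode the equivariant Euler classes $\chi^G(\tau_i)=t_{i,1}\cdots t_{i,n}$ up to the standard identification of $\Hom(G,\Z_2)$ with $H^1(BG;\Z_2)$.

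First I would expand $d(g^*)$ explicitly. Since $d$ is a differential operator on $\Z_2[\widehat{\Hom(\Z_2,G)}]$, it is determined by its action on generators together with a Leibniz-type rule, and this should exhibit $d(g^*)$ as a sum of polynomials in the dual variables whose coefficients depend only on the combinatorics of the monomials $t_{i,1}\cdots t_{i,n}$ of $g$. Next I would translate the DKS condition: for each symmetric polynomial $f$, the sum $\sum_i f(\chi^G(\tau_i))/\chi^G(\tau_i)$ is a $\Z_2$-linear combination of Laurent monomials in $H^*(BG;\Z_2)$, and lying in $H^*(BG;\Z_2)$ is equivalent to the vanishing of all of its strictly negative-degree parts. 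Expanding $f$ in a convenient basis of symmetric polynomials (for instance, the monomial symmetric functions), I would read the vanishing conditions off term by term and assemble them into a single element of $\Z_2[\widehat{\Hom(\Z_2,G)}]$.

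The heart of the argument---and the step I expect to be the main obstacle---is to verify that this assembled element equals $d(g^*)$, or at least that it vanishes if and only if $d(g^*)$ does. Concretely, for each monomial $s^*$ in $\Z_2[\widehat{\Hom(\Z_2,G)}]$, one must match the coefficient of $s^*$ in $d(g^*)$ with the DKS-style obstruction indexed by the corresponding dual monomial. Once this bookkeeping identity is checked, both directions of the theorem follow at once: $g\in\IM\phi_n$ is equivalent, by Theorem~\ref{dks}, to the vanishing of all DKS obstructions, which in turn is equivalent to the vanishing of every coefficient of $d(g^*)$, hence to $d(g^*)=0$. The delicate point is that the symmetric polynomial $f$ ranges over an infinite-dimensional space yet must be captured by a single equation; this is feasible because after division by $\chi^G(\tau_i)$ the relevant Laurent monomials have total degree bounded in terms of $n$, so the space of independent obstructions is finite-dimensional and can be enumerated by monomials in the dual basis.
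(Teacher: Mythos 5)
Your proposal outlines a genuinely different route from the paper's, but it contains a genuine gap at precisely the point you flag as ``the heart of the argument'': you never establish, and it is not a routine bookkeeping identity, that the totality of the DKS integrality conditions (\ref{formula-tks1}) is equivalent to the single equation $d(g^*)=0$. The two sides live in different algebras --- the obstructions $\sum_i f(\chi^G(\tau_i))/\chi^G(\tau_i)$ live in a localization of $H^*(BG;\Z_2)\cong\Z_2[\Hom(G,\Z_2)]$, while $d(g^*)$ lives in the dual algebra $\Z_2[\widehat{\Hom(\Z_2,G)}]$ --- and the translation between them (which in the paper is mediated by Lemma~\ref{local}) is exactly the content that needs proof. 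Moreover, your reduction of the infinitely many conditions (one per symmetric $f$) to a finite set is asserted but not argued: for $\deg f\geq n$ each term $f(\chi^G(\tau_i))/\chi^G(\tau_i)$ has nonnegative degree yet is still a genuine rational function, so membership in $H^*(BG;\Z_2)$ is a nontrivial pole-cancellation condition for each such $f$; making this precise requires a residue/localization argument along each hyperplane $t=0$, grouping the $\tau_i$ by their residue classes mod $t$ --- essentially redoing the Kosniowski--Stong analysis. In effect your plan amounts to proving Corollary~\ref{diff-formula} directly and then invoking Theorem~\ref{dks}; the paper obtains that corollary only as a consequence of Theorem~\ref{main result}, not as an ingredient of its proof.

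For contrast, the paper avoids Theorem~\ref{dks} entirely and works with $G$-colored graphs. For the forward direction (Proposition~\ref{p1}), it represents $g\in\IM\phi_n$ by a prime $G$-colored graph via Theorem~\ref{color poly} and Lemma~\ref{prime}; each edge $e=pq$ gives $\alpha(E_p)\setminus\{\alpha(e)\}\equiv\alpha(E_q)\setminus\{\alpha(e)\}\bmod\alpha(e)$, and Lemma~\ref{local} then shows the degree-$(n-1)$ monomials of $d(g^*)$ cancel in pairs. For the converse (Proposition~\ref{p2}), the vanishing of $d(g^*)$ is used to glue bouquets of labeled half-edges in pairs, producing an $n$-valent $G$-colored graph whose coloring polynomial is $g$, whence $g\in\IM\phi_n$ by Theorem~\ref{color poly}. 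If you want to salvage your approach you would need to carry out the residue analysis sketched above in full; as written, the equivalence you need is assumed rather than proved.
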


\begin{rem}
We shall see from Theorem~\ref{color poly} in
Subsection~\ref{graph1} that $g\in \IM \phi_n$ can also be
characterized by a $G_n$-colored graphs (or mod 2 GKM graph), so
that we may use the $G_n$-colored graphs  to give the proof of
Theorem~\ref{main result}. On the other hand, there is also an
essential relation between $G_n$-colored graphs and $G_n$-colored
simple convex $n$-polytopes in the setting of small covers, which
indicates an
algebraic duality (see Proposition~\ref{small}). This is an important reason why  
we consider the dual polynomial $g^*$ of $g\in \IM \phi_n$.
\end{rem}

Since each class $\chi^{G_n}(\tau_i)$ uniquely corresponds to
$\tau_i$ in Theorem~\ref{dks} and $H^*(BG_n;\Z_2)$ is isomorphic to
$\Z_2[\Hom(G_n, \Z_2)]$, as a consequence of
Theorems~\ref{dks}--\ref{main result}, we have the following
interesting algebraic corollary.

\begin{cor}\label{diff-formula}
Let $g=\sum_i t_{i,1}\cdots t_{i, n}$ be a faithful $G_n$-polynomial
in $\Z_2[\widehat{\Hom(G_n, \Z_2)}]$.
 Then $d(g^*)=0$ if and only if for all symmetric
polynomial functions $f(x_1,...,x_n)$ over ${\Bbb Z}_2$,
\begin{equation*}\label{integral}
\sum_{i}{{f(t_{i,1}, ..., t_{i, n})}\over{t_{i,1}\cdots t_{i,
n}}}\in\Z_2[\Hom(G_n, \Z_2)]\end{equation*} when $t_{i,1}\cdots
t_{i, n}$ and $f(t_{i,1}, ..., t_{i, n})$ are regarded as
polynomials in $\Z_2[\Hom(G_n, \Z_2)]$.
\end{cor}

Our next task is to apply Theorem~\ref{main result} to the study of
$\mathfrak{M}_*$.

\vskip .2cm

  Since $\phi_n: \mathcal{Z}_n(G_n)\longrightarrow \mathcal{R}_n(G_n)$ is  a monomorphism,
  it follows by
  Theorem~\ref{main result} that as linear spaces over $\Z_2$, $\mathcal{Z}_n(G_n)$ is
  isomorphic to the linear space
  $\mathcal{V}_n$ formed by all faithful $G_n$-polynomials $g\in\Z_2[\widehat{\Hom(G_n, \Z_2)}]$
  with $d(g^*)=0$.
  Then, the problem can be further reduced to studying the linear space
   $\mathcal{V}^*_n$ formed by the dual polynomials of those polynomials in
$\mathcal{V}_n$ (see Section~\ref{app-1}). Based upon this and the
Davis--Januszkiewicz theory of small covers, we will show that

\begin{thm} \label{conj}
The Conjecture $(\star)$ holds for arbitrary  dimension $n$.
\end{thm}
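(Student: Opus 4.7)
The plan is to combine the reformulation $\mathfrak{M}_n\cong \mathcal{V}_n$ supplied by Theorem~\ref{main result} with the further reduction to the linear space $\mathcal{L}_n$ sketched just above the statement, and then to exhibit every element of $\mathcal{V}_n$ as the image under $\phi_n$ of an actual small cover. Given a class $[M]\in\mathfrak{M}_n$, set $g=\phi_n([M])\in\mathcal{V}_n$ and use acyclicity of the complex $(\Z_2[\widehat{\Hom(\Z_2,G)}],d)$ to pick a squarefree polynomial $H\in\mathcal{L}_n$ of degree $n+1$ with $d(H)=g^{*}$. The goal is to produce from $H$ a simple convex polytope $P_H$ together with a characteristic function $\lambda_H$ on its facets so that the associated small cover $M(P_H,\lambda_H)$ satisfies $\phi_n(M(P_H,\lambda_H))=g$; by injectivity of $\phi_n$ on $\mathfrak{M}_n$, this small cover then lies in the class $[M]$.

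Combinatorially, I would read each squarefree monomial $s_{j,0}\cdots s_{j,n}$ of $H$ as an abstract $n$-simplex on the vertex set $\{s_{j,0},\ldots,s_{j,n}\}\subset\widehat{\Hom(\Z_2,G)}$, assembling a mod $2$ simplicial $n$-chain $K_H$. The operator $d$ is precisely the simplicial boundary on such chains, and the hypothesis $d(H)\in \mathcal{V}_n^{*}$ says that the $(n-1)$-simplices appearing in $\partial K_H$ all have vertex sets forming bases of $\Hom(\Z_2,G)$. Provided $K_H$ can be arranged to triangulate an $n$-ball (or a finite disjoint union of such), the simple polytope $P_H$ dual to the simplicial sphere $\partial K_H$, with each facet labeled by the corresponding vertex of $\partial K_H$ read as an element of $\Hom(\Z_2,G)$, automatically satisfies the Davis--Januszkiewicz nonsingularity condition for a small cover, and a direct unwinding gives
\[
\phi_n(M(P_H,\lambda_H))=\sum_{v\in V(P_H)}\prod_{F\ni v}\lambda_H(F)=d(H)^{*}=g.
\]

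The principal obstacle, and the step I expect to be hardest, is the geometric realizability of $K_H$: a generic cocycle in $\mathcal{L}_n$ need not be supported on a PL simplicial ball, so no simple polytope is immediately visible. To overcome this I would exploit the fact, implied by the acyclicity proposition, that $H$ is determined by $g$ only up to $\ker d$, i.e. up to coboundaries $d(K)$ with $K$ squarefree of degree $n+2$, leaving substantial freedom to modify $H$ within its class. The plan is to use this freedom to rewrite $H$ as a $\Z_2$-sum of \emph{atomic} polynomials, each coming from a small cover over a product of simplices; these products-of-simplices small covers already generate $\mathcal{Z}_*(G)$ by the companion result advertised in the abstract, and each carries a canonical characteristic function. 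To merge such a $\Z_2$-sum of small covers into a single connected small cover in the same equivariant cobordism class, I would invoke the equivariant connected sum at $G$-fixed points, which combines $M(P,\lambda)$ and $M(Q,\mu)$ into the small cover $M(P\#_v Q,\lambda\cup\mu)$ over the polytope connected sum and is equivariantly cobordant to the disjoint union $M(P,\lambda)\sqcup M(Q,\mu)$.

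In summary, the proof proceeds in four movements: translate via Theorem~\ref{main result} and acyclicity to the problem of finding an $H\in \mathcal{L}_n$ realizable by a simplicial ball; use the coboundary freedom to replace $H$ by a sum of pieces supported on products-of-simplices combinatorics; dualize each piece to a small cover with the prescribed fixed-point data; and glue everything together by equivariant connected sum into one small cover representing $[M]$. The hardest step is the middle pair, namely producing a genuine convex simple polytope whose combinatorics matches the purely algebraic cocycle condition on $H$; once that is done, the injectivity of $\phi_n|_{\mathfrak{M}_n}$ closes the argument.
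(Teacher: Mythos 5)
Your overall architecture matches the paper's: reduce via Theorem~\ref{main result} and the acyclicity of $(\Z_2[\widehat{\Hom(\Z_2,G)}],d)$ to realizing each $g^*\in\mathcal{V}_n^*$ as the coloring polynomial of a single $G$-colored simple convex polytope, decompose $g^*$ into pieces coming from products of simplices, and reassemble by connected sums. But the two steps you flag as the ``middle pair'' are exactly where the paper does its real work, and you leave both as assertions. The claim that ``coboundary freedom'' lets you rewrite $H$ so that $d(H)$ becomes a sum of coloring polynomials of colored products of simplices is precisely the content of Lemma~\ref{key} and Proposition~\ref{colorable1}; the paper proves this by a double induction on degree (writing $h=s_1a_1+\cdots+s_ra_r+h'$, extracting $d(a_i)=0$ from $d(h)=0$, and reducing modulo the subspace $V_h$), and nothing in your sketch substitutes for that argument. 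Appealing to the products-of-simplices generation theorem from the abstract is not circular in the paper's logical order, but that theorem is itself a corollary of the very decomposition you need, so you have only relocated the difficulty.

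More seriously, your gluing step fails as stated. The connected sum $P_1\sharp_{v_1,v_2}P_2$ of colored polytopes is only defined when the coloring monomials at the chosen vertices coincide, $\lambda_{1v_1}=\lambda_{2v_2}$, i.e.\ when the tangent $G$-representations at the corresponding fixed points are isomorphic. Two small covers over products of simplices occurring in the decomposition of $g^*$ need in general share no common vertex monomial, so no direct equivariant connected sum exists; and a disjoint union is not a small cover over a single simple convex polytope, so you cannot stop there. The paper's Claim~A resolves this by inserting a chain of auxiliary prisms $Q_i=\Delta^{n-1}\times\Delta^1$, each colored so as to alter one degree-one factor of a vertex monomial at a time, interpolating from a vertex monomial $s_1\cdots s_n$ of $P_1$ to a vertex monomial $\widetilde{s_1}\cdots\widetilde{s_n}$ of $P_2$ before performing the sequence of connected sums. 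Without this device (or an equivalent one) your final movement does not go through. Your opening idea of realizing $H$ as a simplicial ball and dualizing is abandoned by you for good reason; the paper never attempts it.
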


\begin{rem} It has been shown in \cite{l1} that the Conjecture ($\star$) holds for $n\leq 3$.
     However, the argument used in \cite{l1} does not work effectively  in the case $n>3$.
          \end{rem}

The sum $\mathfrak{M}_*=\sum_{n\geq 1}\mathcal{Z}_n(G_n)$ is also a
graded ring with
 the multiplication defined by $\{M_1^{n_1}\}\cdot\{M_2^{n_2}\}=\{M_1^{n_1}\times
M_2^{n_2}\}$ where  the $(\Z_2)^{n_1+n_2}$-action on
$M_1^{n_1}\times M_2^{n_2}$ is given by $\big((g_1, g_2), (x_1,
x_2)\big)\longmapsto \big(g_1(x_1), g_2(x_2)\big)$ 
by regarding $(\Z_2)^{n_1+n_2}$
as $(\Z_2)^{n_1}\times (\Z_2)^{n_2}$. It should be pointed out that
the  multiplication defined as above depends upon the ordering of
the cartesian  product of $M_1^{n_1}$ with $(\Z_2)^{n_1}$-action and
$M_2^{n_2}$ with
  $(\Z_2)^{n_2}$-action.  Actually,  in the same way as above, by regarding $(\Z_2)^{n_1+n_2}$ as $(\Z_2)^{n_2}\times (\Z_2)^{n_1}$, the $(\Z_2)^{n_1+n_2}$-action on $M_2^{n_2}\times M_1^{n_1}$ would be defined  by $\big((g_2, g_1), (x_2, x_1)\big)\longmapsto \big(g_2(x_2), g_1(x_1)\big)$. However, generally such two $(\Z_2)^{n_1+n_2}$-actions on $M_1^{n_1}\times M_2^{n_2}$
  and $M_2^{n_2}\times M_1^{n_1}$  are not equivariantly cobordant except for $\{M_1^{n_1}\}=\{M_2^{n_2}\}$,
  but up to automorphisms of $(\Z_2)^{n_1+n_2}$, they have not any difference essentially
  (i.e., by using an automorphism of $(\Z_2)^{n_1+n_2}$, one of such two actions can be changed into the other one).
  Thus, $\mathfrak{M}_*$ is  a graded noncommutative  ring.

\begin{thm}\label{ring}
$\mathfrak{M}_*$ is generated by the classes of all small covers over $\Delta^{n_1}\times\cdots\times\Delta^{n_\ell}$
with $n_1+\cdots+n_\ell\geq 1$, where $\Delta^{n_i}$ is an $n_i$-simplex.
\end{thm}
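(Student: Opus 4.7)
The plan is to combine Theorem~\ref{conj} with a decomposition argument, translated when useful to the algebraic side of Theorem~\ref{main result}. Let $\mathfrak{S}_* \subseteq \mathfrak{M}_*$ be the subring generated by classes of small covers over products of simplices. Because the cartesian product of small covers $N_i$ over $P_i$ with characteristic functions $\lambda_i$ is itself a small cover over $P_1 \times P_2$, with characteristic function $\lambda_1 \oplus \lambda_2$ taking values in $(\Z_2)^{n_1+n_2}$ via the block splitting, the subring $\mathfrak{S}_*$ is additively spanned by the classes of individual small covers over products of simplices. Thus, by Theorem~\ref{conj}, it suffices to show that the class of any small cover $M$ over an arbitrary simple convex polytope $P$ lies in $\mathfrak{S}_*$.

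To establish this, I would work on the algebraic side. Under the isomorphism $\mathfrak{M}_n \cong \mathcal{V}_n$ of Proposition~\ref{iso}, the small cover $M$ over $P$ with characteristic function $\lambda$ corresponds to the faithful polynomial $g_M = \sum_v \prod_{F \ni v} \lambda(F) \in \mathcal{V}_n$, summed over vertices $v$ of $P$. Small covers over $\Delta^{n_1} \times \cdots \times \Delta^{n_\ell}$ produce faithful polynomials of a distinctive product form, and the task reduces to showing that every such $g_M$ decomposes as a $\Z_2$-linear combination of these product-of-simplex polynomials in $\mathcal{V}_n$. Equivalently, by passing through the dual and using the acyclicity of $(\Z_2[\widehat{\Hom(\Z_2, G)}], d)$ given by Proposition~\ref{acyclic}, this becomes a decomposition statement for $g_M^* \in \mathcal{L}_n$.

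I would carry out the decomposition by a double induction on $n$ and on the number of facets of $P$. In the inductive step, choose a facet $F \subset P$ and consider the characteristic submanifold $M_F \subseteq M$, itself an $(n-1)$-dimensional small cover over $F$. Performing an equivariant surgery along a tubular neighborhood of $M_F$ should, up to equivariant cobordism, express $\{M\}$ as the product of $\{M_F\}$ with the class of a small cover over $\Delta^1$, plus a class represented by a small cover over a polytope with strictly fewer facets than $P$; applying the inductive hypothesis to each piece then writes $\{M\}$ as a polynomial combination of classes of small covers over products of simplices. The main obstacle will be executing this inductive step rigorously: verifying that the surgery can be chosen $G$-equivariantly so as to preserve the small-cover structure, and that the resulting combinatorial decomposition terminates at products of simplices. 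A purely algebraic version of the argument, working directly with $g_M^*$ inside $\mathcal{L}_n$ and using the differential $d$ to track the $d$-closed condition, is likely the cleanest route and concentrates all combinatorial difficulty in one place.
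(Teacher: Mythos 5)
Your reduction is sound as far as it goes: products of small covers over products of simplices are again small covers over products of simplices (the paper's Lemma~\ref{formula} is the algebraic shadow of this), and by Theorem~\ref{conj} every class has a small cover representative, so the theorem does reduce to showing that the class of an arbitrary small cover lies in the span of classes of small covers over products of simplices. But that remaining step is precisely where all the content of the theorem lives, and your proposal does not supply it. The ``equivariant surgery along a tubular neighborhood of $M_F$'' that is supposed to write $\{M\}$ as $\{M_F\}$ times a small cover over $\Delta^1$ plus a class over a polytope with fewer facets is asserted, not proved, and it is far from clear that any such decomposition exists: cutting $P$ along (a parallel copy of) a facet $F$ does not in general produce $F\times\Delta^1$ together with a simple polytope with fewer facets carrying a compatible characteristic function, and the induction on facet number has no identified terminal objects other than the hope that it ``terminates at products of simplices.'' You acknowledge this is the main obstacle, which means the proof is a plan rather than an argument.

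The paper's route is quite different and entirely algebraic, and it is worth seeing why it avoids your obstacle. Instead of decomposing a given colored polytope, it works with the space $\mathcal{L}_n$ of squarefree degree-$(n+1)$ polynomials $H$ with $d(H)\in\mathcal{V}_n^*$ (acyclicity of $d$, Proposition~\ref{acyclic} and Corollary~\ref{diff}, guarantees every element of $\mathcal{V}_n^*$ arises as some $d(H)$). Lemma~\ref{basis} classifies the monomials that can occur in such an $H$ up to $\mathrm{Aut}(\Hom(\Z_2,G))$: each generator has the form $\rho_1^*\cdots\rho_i^*(\rho_1^*+\cdots+\rho_i^*)\,h_i$ with $d(h_i)=0$, where the first factor is the ``potential'' of a colored $i$-simplex. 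The heart of the proof is then Lemma~\ref{key}, an induction on $\deg h$ showing every $d$-closed $h$ of the relevant kind is a sum of coloring polynomials of colored products of simplices; the inductive step is the purely formal rewriting $h=(s_1+s_r)a_1+\cdots+(s_{r-1}+s_r)a_{r-1}+d(s_rh')$ extracted from $d(h)=0$. This is exactly the ``purely algebraic version'' you mention in your last sentence as likely the cleanest route, but you would need to actually carry out that induction (in particular, the classification of monomials of $H$ and the treatment of the $d(s_rh')$ term) for the proof to be complete. As written, the proposal has a genuine gap at its central step.
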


\begin{rem}
We know from \cite{cms} that a small cover over a product of simplices is actually identified with a generalized
real Bott manifold, so $\mathfrak{M}_*$ is generated by the classes of all generalized
real Bott manifolds.
\end{rem}

As a computational application, we determine the precise structure
of $\mathcal{Z}_4(G_4)$.

\begin{prop}\label{dim-4}
$\mathcal{Z}_4(G_4)$ is generated by merely the classes of small
covers over $\Delta^2\times \Delta^2$, and has dimension $510$.
\end{prop}
In addition, we shall also give a simple proof of the main result on
$\mathcal{Z}_3(G_3)$ in \cite{l1} (see Remark~\ref{m3}).

\section{Faithful polynomials, dual polynomials and a differential  operator} \label{dual-diff}

\subsection{Faithful polynomials and dual polynomials}\label{dual poly}
 $\Hom (G_n,\Z_2)$ and $\Hom (\Z_2, G_n)$ are clearly isomorphic to $G_n$,
 and they are dual to each other by the following pairing:
\begin{equation}\label{pairing}
\langle\cdot, \cdot\rangle: \Hom(\Z_2, G_n)\times\Hom(G_n,
\Z_2)\longrightarrow \Hom(\Z_2,\Z_2)
\end{equation}
defined by $\langle \xi, \rho\rangle=\rho\circ \xi$, composition of
homomorphisms. For example, the standard basis $\{\rho_1, ..., $
$\rho_n\}$ of $\Hom(G_n,\Z_2)$ gives the dual basis
$\{\rho_1^*,...,\rho_n^*\}$ of $\Hom(\Z_2,G_n)$, where $\rho_i$ is
defined by $(g_1,..., g_n)\longmapsto  g_i$, and $\rho_i^*$ is
defined by $a\longmapsto (\underbrace{0,...,0}_{i-1}, a, 0,...,0)$.

Recall that $\Z_2[\widehat{\Hom(G_n, \Z_2)}]$ can be identified with
 $\mathcal{R}_*(G_n)$, such that each monomial in $\Z_2[\widehat{\Hom(G_n,
 \Z_2)}]$ can be used as the class of a $G_n$-representation in
 $\mathcal{R}_*(G_n)$.
  Suppose that $g =\sum_it_{i,1}\cdots t_{i,n}$ is a nonzero homogeneous polynomial of degree $n$ in
  $\Z_2[\widehat{\H(G_n,\Z_2)}]$ such that each monomial $t_{i,1}\cdots t_{i,n}$ is the class of
  an $n$-dimensional
  faithful $G_n$-representation,  so  $\{t_{i,1}, ...,  t_{i,n}\}$ forms a basis of $\H(G_n,\Z_2)$.
  Such a polynomial $g$ is  called a
 {\em faithful $G_n$-polynomial} of $\Z_2[\widehat{\H(G_n,\Z_2)}]$.  By the pairing (\ref{pairing}),
 $\{t_{i,1}, ...,  t_{i,n}\}$ determines a dual basis $\{s_{i,1},..., s_{i,n}\}$ of $\H(\Z_2, G_n)$. Furthermore, we obtain a unique  homogeneous polynomial $g^*=\sum_i s_{i,1}\cdots s_{i,n}$ in
$\Z_2[\widehat{\H(\Z_2,G_n)}]$, which is called the {\em dual
$G_n$-polynomial} of $g$.

\begin{example}\label{example-dual}
When $n=3$, take a faithful polynomial
$g=\rho_1\rho_2\rho_3+\rho_1\rho_3(\rho_2+\rho_3)+
\rho_1\rho_2(\rho_2+\rho_3)+\rho_1(\rho_1+\rho_3)(\rho_1+\rho_2)+\rho_1(\rho_1+\rho_3)(\rho_2+\rho_3)
+\rho_1(\rho_1+\rho_2)(\rho_2+\rho_3)$ in
$\Z_2[\widehat{\H(G_3,\Z_2)}]$. Then the dual polynomial of $g$ is
$g^*=\rho_1^*\rho_2^*\rho_3^*+\rho_1^*\rho_2^*(\rho_2^*+\rho_3^*)+\rho_1^*\rho_3^*(\rho_2^*+\rho_3^*)+\rho_2^*\rho_3^*(\rho_1^*+\rho_2^*+\rho_3^*)+
\rho_2^*(\rho_2^*+\rho_3^*)(\rho_1^*+\rho_2^*+\rho_3^*)+\rho_3^*(\rho_2^*+\rho_3^*)(\rho_1^*+\rho_2^*+\rho_3^*)$
in $\Z_2[\widehat{\H(\Z_2, G_3)}]$.
\end{example}

\subsection{A differential operator $d$ on $\Z_2[\widehat{\H(\Z_2, G_n)}]$}\label{operator}
We define a differential operator $d$ on $\Z_2[\widehat{\H(\Z_2,
G_n)}]$ as follows: for each monomial $s_1\cdots s_i$ of degree
$i\geq 1$
$$d_i(s_1\cdots s_i)=\begin{cases}
\sum_{j=1}^is_1\cdots s_{j-1}\widehat{s}_js_{j+1}\cdots s_i &\text{ if } i>1\\
1 &\text{ if } i=1.
\end{cases}$$
and $d_0(1)=0$, where the symbol $\widehat{s}_j$ means that $s_j$ is
deleted. Obviously, $d^2=0$. Thus, $(\Z_2[\widehat{\H(\Z_2, G_n)}],
d)$ forms a chain complex.

\begin{prop}\label{acyclic}
For all $i\geq 0$,
 $H_i(\Z_2[\widehat{\text{\rm \H}(\Z_2, G_n)}];{\Bbb Z}_2)=0$.
\end{prop}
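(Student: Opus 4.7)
The plan is to construct an explicit contracting chain homotopy for $(\Z_2[\widehat{\H(\Z_2, G)}], d)$, which will immediately give acyclicity in all degrees.

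The first step is to verify that $d$ is a graded derivation of $\Z_2[\widehat{\H(\Z_2, G)}]$, i.e.\ that $d(pq) = d(p)\,q + p\,d(q)$, and that $d(s) = 1$ for every single generator $s \in \widehat{\H(\Z_2, G)}$. The derivation identity follows from the defining formula by a direct combinatorial check: writing $p$ and $q$ as monomials $s_1 \cdots s_i$ and $t_1 \cdots t_j$, the product $pq$ is the length-$(i+j)$ monomial $s_1 \cdots s_i t_1 \cdots t_j$, and the sum over positions deleted from $pq$ splits into deletions within the $p$-factor and deletions within the $q$-factor. The edge cases $d(1)=0$ and $d(s)=1$ fit into the same pattern once one treats a deletion that leaves length $0$ as producing $1$.

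The second step is to fix any $s_0 \in \widehat{\H(\Z_2, G)}$ and set $\mu(p) := s_0\cdot p$, a $\Z_2$-linear operator of degree $+1$. Using the derivation identity together with $d(s_0) = 1$ and the fact that $2 = 0$ in $\Z_2$,
\[
(d\mu + \mu d)(p) \;=\; d(s_0\,p) + s_0\,d(p) \;=\; d(s_0)\,p + s_0\,d(p) + s_0\,d(p) \;=\; p.
\]
Hence $d\mu + \mu d = \id$ on the whole complex, so $\mu$ is a chain contraction of the identity, and therefore $H_i(\Z_2[\widehat{\H(\Z_2, G)}];\Z_2) = 0$ for every $i\geq 0$, including $i=0$ (where acyclicity amounts to $1 = d(s_0) \in \IM d$).

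The main obstacle is the bookkeeping in the first step — confirming that the combinatorial ``delete one factor'' operator is genuinely a derivation, rather than merely a $\Z_2$-linear map. Once that is settled, the second step is essentially a one-line calculation.
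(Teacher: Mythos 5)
Your proof is correct and rests on exactly the same identity the paper uses, namely $d(sh)=h+s\,d(h)$ for a fixed generator $s$; you merely package it as the chain contraction $\mu(p)=s_0p$ with $d\mu+\mu d=\id$, whereas the paper applies the identity directly to a cycle $h\in\ker d_i$ to exhibit it as $d(sh)$. The two arguments are essentially identical, with yours handling the degree-$0$ case uniformly rather than separately.
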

\begin{proof} It is easy to see that $H_0(\Z_2[\widehat{\H(\Z_2, G_n)}];{\Bbb Z}_2)=0$.
So it suffices to show that $\text{\rm Im} d_{i+1}=\ker d_i$ for
$i>0$. Obviously, $\text{\rm Im} d_{i+1}\subseteq\ker d_i$.
Conversely, for any $h\in \ker d_i$, take $\varphi=sh$ where
$s\in\widehat{\H(\Z_2, G_n)}$. Then $d_{i+1}(\varphi)=h+sd_i(h)=h$
so $h\in \text{\rm Im} d_{i+1}$. Thus $\text{\rm Im}
d_{i+1}\supseteq\ker d_i$.
\end{proof}

\begin{defn}
A polynomial $h\in \Z_2[\widehat{\H(\Z_2, G_n)}]$ is said to be {\em
squarefree} if each monomial of $h$ is a product of distinct
nontrivial elements in $\widehat{\H(\Z_2, G_n)}$, where the trivial
element in $\widehat{\H(\Z_2, G_n)}$ is the zero homomorphism from
$\Z_2$ to $G_n$.
\end{defn}

\begin{cor}\label{diff}
Let $h\in \Z_2[\widehat{\Hom(\Z_2, G_n)}]$ be squarefree. Then
$d(h)=0$ if and only if there is a squarefree polynomial $\varphi$
in $\Z_2[\widehat{\Hom(\Z_2, G_n)}]$ such that $d(\varphi)=h$.
\end{cor}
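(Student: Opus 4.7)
The plan is to handle the two directions separately. The reverse direction $(\Leftarrow)$ I would dispatch immediately from $d^2=0$: if $d(H)=h$, then $d(h)=d(d(H))=0$. For the forward direction, one could invoke Proposition~\ref{acyclic} to obtain \emph{some} preimage, but the construction $H=sh$ used there fails to produce a squarefree $H$ whenever $s$ already appears as a factor of a monomial of $h$. My plan is therefore to build an explicit chain contraction on the subcomplex of squarefree polynomials that stays inside the squarefree polynomials.

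I would fix any nontrivial element $v_0\in\widehat{\Hom(\Z_2,G)}$ and define a $\Z_2$-linear \emph{cone operator} $C$ on squarefree polynomials by
\[
C(s_1\cdots s_k)=\begin{cases} v_0\,s_1\cdots s_k & \text{if } v_0\notin\{s_1,\ldots,s_k\},\\ 0 & \text{if } v_0\in\{s_1,\ldots,s_k\},\end{cases}
\]
together with $C(1)=v_0$. By construction $C$ sends squarefree polynomials to squarefree polynomials, since whenever it actually prepends $v_0$ the target monomial does not already contain $v_0$, and $v_0$ itself is nontrivial.

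The main step, and the only one requiring real care, is to verify the chain-homotopy identity $dC+Cd=\id$ on squarefree polynomials. On a squarefree monomial $m$ not divisible by $v_0$, using that $d$ acts as a derivation on products of distinct factors over $\Z_2$ together with $d(v_0)=1$, one gets $d(C(m))=m+v_0\,d(m)$; meanwhile every monomial of $d(m)$ still avoids $v_0$, so $C(d(m))=v_0\,d(m)$, and the two copies of $v_0\,d(m)$ cancel in characteristic~$2$. On a squarefree monomial $m=v_0 m'$ divisible by $v_0$ one has $C(m)=0$, and of the two summands in $d(m)=m'+v_0\,d(m')$ only $m'$ escapes the kernel of $C$, yielding $C(d(m))=v_0 m'=m$. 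Either way the total equals $m$. Given squarefree $h$ with $d(h)=0$, I would then simply take $H:=C(h)$; it is squarefree by construction and satisfies $d(H)=d(C(h))+C(d(h))=h$, proving the corollary.
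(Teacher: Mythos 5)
Your proof is correct and is essentially the paper's argument in different packaging: your $C(h)$ is exactly the paper's $H=sh_1$ (the chosen element times the part of $h$ whose monomials avoid it), and your chain-homotopy identity $dC+Cd=\id$ encodes the same cancellation the paper extracts by computing $d(sh)=h$ and using squarefreeness to force $d(h_2)=0$. The homotopy-operator formulation is a clean way to organize the verification, but no new idea is involved.
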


\begin{proof}
Obviously, if $h=d(\varphi)$, then $d(h)=d^2(\varphi)=0$.
Conversely,  by Leibniz rule, for any nontrivial element $s\in
\widehat{\Hom(\Z_2, G_n)}$,  $d(sh)=h+sd(h)=h$. If $sh$ is not
squarefree, then we may write $sh=sh_1+s^2h_2$ such that $sh_1$ is
nonzero and sequarefree. Furthermore,
$h=d(sh)=h_1+sd(h_1)+s^2d(h_2)$. This forces $d(h_2)$ to be zero
since $h$ is squarefree. Thus, we can take $\varphi=sh_1$ as
desired.
\end{proof}

\begin{rem}
It should be pointed out that similarly we may define a differential
operator $d'$ on $\Z_2[\widehat{\H(G_n,\Z_2)}]$. However, given a
faithful polynomial $g\in \Z_2[\widehat{\H(G_n,\Z_2)}]$, if
$d(g^*)=0$, then generally we cannot obtain $d'(g)=0$. For example,
for the $g$ and $g^*$  in Example~\ref{example-dual},  a direct
calculation shows that $d(g^*)=0$ but $d'(g)\not=0$.
\end{rem}

Let $h\in \Z_2[\widehat{\H(\Z_2, G_n)}]$. For an automorphism
$\sigma$ of $\Hom(\Z_2, G_n)$, let $\sigma(h)$ denote the polynomial
of $\Z_2[\widehat{\H(\Z_2, G_n)}]$ produced by replacing each
degree-one factor $t$ in $h$ by $\sigma(t)$, where $t$ is regarded
as an element in $\Hom(\Z_2, G_n)$.  Then we see that
$\Z_2[\widehat{\H(\Z_2, G_n)}]$ naturally admits an action $\Phi$ of
$\text{Aut}(\Hom(\Z_2, G_n))$, defined by
 $h\longmapsto\sigma(h)$. A direct calculation gives the following result.
 \begin{lem}\label{commut}
 Let $h\in \Z_2[\widehat{\Hom(\Z_2, G_n)}]$ and $\sigma\in\text{\rm Aut}(\Hom(\Z_2, G_n))$.
 Then $$d(\sigma(h))=\sigma(d(h)).$$
 \end{lem}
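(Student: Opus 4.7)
The plan is to verify the identity on monomials and extend by $\Z_2$-linearity. First I would observe that because $\sigma$ is a bijection of $\Hom(\Z_2, G)$, it permutes the set $\widehat{\Hom(\Z_2, G)}$ of polynomial generators, and so the action $h\mapsto\sigma(h)$ extends uniquely to a $\Z_2$-algebra automorphism of $\Z_2[\widehat{\Hom(\Z_2, G)}]$. In particular, $\sigma$ respects products of generators, so $\sigma(s_1\cdots s_i)=\sigma(s_1)\cdots\sigma(s_i)$ for any monomial.

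Since both $d$ and $\sigma$ are $\Z_2$-linear, it suffices to check the identity on an arbitrary monomial $s_1\cdots s_i$. A direct unwinding of the definition of $d$ gives
$$d(\sigma(s_1\cdots s_i))=d(\sigma(s_1)\cdots\sigma(s_i))=\sum_{j=1}^{i}\sigma(s_1)\cdots\widehat{\sigma(s_j)}\cdots\sigma(s_i),$$
while applying $\sigma$ to $d(s_1\cdots s_i)$ and using that $\sigma$ is a ring homomorphism yields
$$\sigma(d(s_1\cdots s_i))=\sigma\Bigl(\sum_{j=1}^{i}s_1\cdots\widehat{s}_j\cdots s_i\Bigr)=\sum_{j=1}^{i}\sigma(s_1)\cdots\widehat{\sigma(s_j)}\cdots\sigma(s_i),$$
which coincides with the previous expression. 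The edge cases are immediate: for $i=1$ we have $d(\sigma(s))=1=\sigma(1)=\sigma(d(s))$, and $d(\sigma(1))=d(1)=0=\sigma(0)$.

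There is no real obstacle here: the operator $d$ is defined by a combinatorial rule that is manifestly symmetric in its generator arguments, so any permutation of generators automatically commutes with $d$. The only fine point worth emphasizing in the writeup is that the extension of $\sigma$ to a ring automorphism exploits the bare-set structure implicit in the notation $\widehat{\Hom(\Z_2,G)}$: one treats each element of $\Hom(\Z_2,G)$ as an independent indeterminate, so no identity of the form $\sigma(t)+\sigma(t')=\sigma(t+t')$ needs to be imposed or verified inside the polynomial ring.
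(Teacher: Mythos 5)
Your proof is correct and is exactly the "direct calculation" the paper invokes without writing out: check the identity on monomials using that $\sigma$ permutes the generators of $\Z_2[\widehat{\Hom(\Z_2,G)}]$ and hence induces an algebra automorphism, then extend by linearity. Your closing remark about treating elements of $\Hom(\Z_2,G)$ as independent indeterminates (so no compatibility with the additive structure is needed) is the right point to flag.
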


\section{$G_n$-colored graphs and small covers}\label{colored graph-small cover}

Throughout the following, $\Z_2[\widehat{\Hom(G_n, \Z_2)}]$ will be
identified with $\mathcal{R}_*(G_n)$. Then we may write the Stong
homomorphism as $\phi_*: \mathcal{Z}_*(G_n)\longrightarrow
\Z_2[\widehat{\Hom(G_n, \Z_2)}]$.

\subsection{$G$-colored graphs} \label{graph1}
In \cite{gkm}, Goresky, Kottwitz and MacPherson established the GKM
theory, indicating that  there is an essential link between topology
and geometry of torus actions and the combinatorics of colored
graphs (see also \cite{gz}). Such a link has already been expanded
to the case of mod 2-torus actions (see, e.g.,
\cite{bl}--\cite{bgh}, \cite{l2}, and \cite{l3}). Specifically,
assume that $M^m$ is a smooth closed $m$-manifold with an effective
smooth $G_n$-action fixing a nonempty finite set $M^{G_n}$, which
implies $m\geq n$ (see \cite{ap}). Then we know from \cite{l2, l3}
that the $G_n$-action on $M^m$ defines a regular graph $\Gamma_M$ of
valence $m$ with the vertex set $M^{G_n}$ and a $G_n$-coloring
$\alpha$.

\vskip .1cm In this paper we shall pay more attention on the extreme
case in which $m=n$ (i.e., $M^n$ is a 2-torus manifold). In this
case we also know from \cite{bl, l2, l3} that such a $G_n$-colored
graph $(\Gamma_M, \alpha)$ is uniquely determined by the
$G_n$-action where $\alpha$ is defined as
 a  map  from the
set $E_{\Gamma_M}$ of all edges of $\Gamma_M$ to all non-trivial
elements of $\Hom(G_n,{\Bbb Z}_2)$, and it satisfies the following
properties:.
\begin{enumerate}
\item[(P1)] for each vertex $v$ of $\Gamma_M$, $\prod_{x\in E_v}\alpha(x)$ is faithful
in $\Z_2[\widehat{\Hom(G_n,{\Bbb Z}_2)}]$ (or equivalently,
$\alpha(E_v)=\{\alpha(x)| x\in E_v\}$ forms a basis of
$\Hom(G_n,{\Bbb Z}_2)$), where $E_v$ denotes the set of all edges
adjacent to $v$;

\item[(P2)] for each edge $e$ of $\Gamma_M$, $\alpha(E_u)\equiv
\alpha(E_v) \mod \alpha(e)$ in $\Hom(G_n,{\Bbb Z}_2)$ where $u$ and
$v$ are two endpoints of $e$.
\end{enumerate}
The pair $(\Gamma_M, \alpha)$ is called the {\em $G_n$-colored
graph} of the 2-torus manifold $M^n$ here.

\begin{rem}
 The property (P2) has the following
equivalent statement that for each edge $e=uv$ of $\Gamma_M$, there
is a unique bijection $\theta_{e}: E_u\longrightarrow E_v$ such that
for any $e'\in E_u\backslash\{e\}$,
$$\alpha(e')\equiv\alpha(\theta_{e}(e'))\mod \alpha(e).$$
The collection $\theta=\{\theta_e\big\vert e\in E_{\Gamma_M}\}$ is
called a \emph{connection} of $(\Gamma_M, \alpha)$. Geometrically,
 $\{\prod_{x\in E_v}\alpha(x)|v\in V_{\Gamma_M}\}$ or $\{\alpha(E_v)| v\in V_{\Gamma_M}\}$ means
 the collection of all tangential $G_n$-representations at fixed points in
 $M^n$, and $\theta=\{\theta_e\big\vert e\in E_{\Gamma_M}\}$ means the
 connection among all tangential $G_n$-representations at fixed
 points.
\end{rem}

\begin{example}
Consider the $n$-dimensional real projective space ${\Bbb R}P^n$
($n\geq 2$) with the standard linear $G_n$-action
        defined by $$[x_0,x_1,...,x_n]\longmapsto[x_0,(-1)^{g_1}x_1,...,(-1)^{g_n}x_n]$$
     which fixes $n+1$ isolated points $[0,...,0,1,0,...,0]$ with 1
in the $i$-th place for $i=0,1,...,n$. This action determines a
unique regular graph $\Gamma_{{\Bbb R}P^n}$, which is just the
1-skeleton of an $n$-simplex $\Delta^n$, and the ${{n+1}\choose 2}$
edges of $\Gamma_{{\Bbb R}P^n}$ are colored by $\rho_1, ..., \rho_n,
\rho_i+\rho_j, 1\leq i<j\leq n$, respectively, where $\{\rho_1, ...,
\rho_n\}$ is the standard basis of $\mbox{Hom}(G_n,{\Bbb Z}_2)$.
When $n=2, 3$, the $G_n$-colored graph $\Gamma_{{\Bbb R}P^n}$ is
shown in Figure~\ref{n2}.
\begin{figure*}[h]
    \input{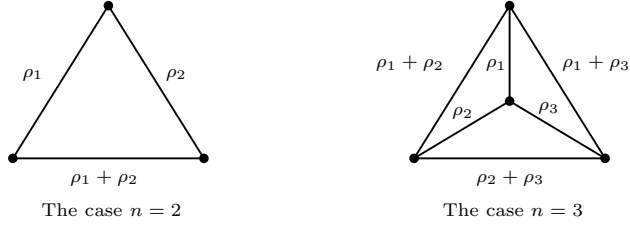}\centering
    \caption[a]{Colored graphs for the cases $n=2, 3$. }\label{n2}
\end{figure*}

We note that the diagonal action on two copies of the standard $(\Z_2)^2$-action on ${\Bbb
R}P^2$ and the twist involution on the product ${\Bbb R}P^2\times{\Bbb R}P^2$ may give a
$({\Bbb Z}_2)^3$-action on ${\Bbb R}P^2\times{\Bbb R}P^2$ fixing
three fixed points. However, ${\Bbb R}P^2\times{\Bbb R}P^2$ with this $({\Bbb Z}_2)^3$-action is not a 2-torus manifold, but it can determine a colored graph, as shown in Figure~\ref{n3}
\begin{figure*}[h]
    \input{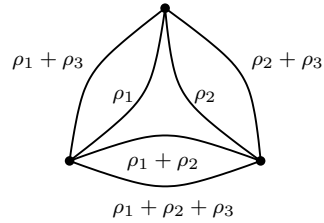}\centering
    \caption[a]{The colored graph of the $({\Bbb Z}_2)^3$-action on ${\Bbb R}P^2\times{\Bbb R}P^2$. }\label{n3}
\end{figure*}
\end{example}

Guillemin and Zara \cite{gz} formulated the results of GKM theory in
terms of a colored graph, and developed the GKM theory
combinatorially. They defined and studied the abstract GKM graphs.
This idea may still be carried out in the mod 2 case.  Following
\cite{l2}, let $\Gamma$ be a finite regular graph of valence $n$
without loops. If there is a map $\alpha$ from the set $E_\Gamma$ of
all edges of $\Gamma$ to all nontrivial elements of $\Hom(G_n,{\Bbb
Z}_2)$ satisfying  the properties (P1) and (P2) as above,  then the
pair $(\Gamma, \alpha)$ is called an  {\em abstract $G_n$-colored
graph} of $\Gamma$, and $\alpha$ is called a {\em $G_n$-coloring} on
$\Gamma$.

\begin{rem}
By definition, it is easy to see that if all $\alpha(E_v), v\in V_{\Gamma}$, are
distinct, then for each edge $e\in E_{\Gamma}$, $|E_e|=1$ where
$V_{\Gamma}$ denotes the set  of vertices in $\Gamma$ and $E_e$
denotes all edges joining two endpoints of $e$ (see also \cite[Lemma
5.1]{l3}).
\end{rem}

Obviously, an abstract $G_n$-colored graph $(\Gamma, \alpha)$
determines a faithful $G_n$-polynomial $\sum_{v\in V_\Gamma}
\prod_{x\in E_v}\alpha(x)$ in $\Z_2[\widehat{\Hom(G_n,{\Bbb
Z}_2)}]$.

\begin{defn} Let $(\Gamma, \alpha)$ be an abstract $G_n$-colored graph.
Set $$g_{(\Gamma, \alpha)} = \sum_{v\in V_\Gamma} \prod_{x\in
E_v}\alpha(x)$$ which is called the {\em $G_n$-coloring polynomial}
of $(\Gamma, \alpha)$.
\end{defn}

It was shown in \cite[Proposition 2.2]{l2} that for an  abstract
$G_n$-colored graph $(\Gamma, \alpha)$, the collection
$\{\alpha(E_v), v\in V_{\Gamma}\}$ is always realizable as the fixed
point data of some 2-torus manifold $M^n$, which implies that the
$G_n$-coloring polynomial $g_{(\Gamma, \alpha)}$ of $(\Gamma,
\alpha)$ must belong to the image of $\phi_n:
\mathcal{Z}_n(G_n)\longrightarrow\mathcal{R}_n(G_n)$, where
$\mathcal{R}_n(G_n)$ is regarded as the subgroup of
$\Z_2[\widehat{\Hom(G_n, \Z_2)}]$. However, this result does not
tell us whether $(\Gamma, \alpha)$ is the $G_n$-colored graph
$(\Gamma_M, \alpha)$ of $M^n$ or not, which is related  to the
following geometric realization problem: {\em under what condition
can $(\Gamma, \alpha)$ become a $G_n$-colored graph of some 2-torus
manifold?} Some work for the geometric realization problem has been
studied in details in \cite{bl}.

\vskip .1cm

On the other hand, as mentioned above, we have known  from \cite{bl}
or \cite[Section 2]{l3} that each 2-torus manifold $M^n$  determines
a $G_n$-colored graph $(\Gamma_M, \alpha)$, and the corresponding
$G_n$-coloring polynomial $g_{(\Gamma_M, \alpha)}$ is exactly
$\phi_n(\{M^n\})$.  Since $\phi_n$ is a monomorphism and the
$G_n$-colored graph $(\Gamma_M, \alpha)$  may naturally be
understood as an abstract $G_n$-colored graph, it follows that
\begin{thm}\label{color poly}
A faithful $G_n$-polynomial $g\in \Z_2[\widehat{\Hom(G_n,{\Bbb
Z}_2)}]$ belongs to $\IM\phi_n$ if and only if it is the
$G_n$-coloring polynomial of an abstract $G_n$-colored graph
$(\Gamma, \alpha)$.
\end{thm}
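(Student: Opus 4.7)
The plan is simply to combine the two results that are already recalled immediately before the statement, so that essentially no new work is required beyond a bookkeeping check. For the sufficiency direction, if $g = g_{(\Gamma,\alpha)}$ for some $G$-colored graph $(\Gamma,\alpha)$, then \cite[Proposition 2.2]{l2} directly gives $g \in \IM\phi_n$ and nothing more need be said.

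For the necessity direction, suppose $g \in \IM\phi_n$ is a faithful $G$-polynomial and write $g = \phi_n(\{M\}) = \sum_{p\in M^G}[\tau_pM]$ for some smooth closed $n$-manifold $M$ with a smooth $G$-action whose fixed set is finite. The assumption that $g$ is faithful says that for every $p \in M^G$ the $n$ irreducible summands of $\tau_pM$ correspond to a basis of $\Hom(G,\Z_2)$, so each $\tau_pM$ is itself a faithful $G$-representation. In particular the $G$-action is effective on the union $M'$ of the connected components of $M$ that meet $M^G$, while components disjoint from $M^G$ contribute nothing to $\phi_n(\{M\})$; replacing $M$ by $M'$ we may therefore assume $\{M\} \in \mathfrak{M}_n$. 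Then the construction reviewed in \cite{bl} and \cite[Section 2]{l3} associates to $M$ a $G$-colored graph $(\Gamma_M,\alpha)$: the vertices of $\Gamma_M$ are the fixed points of $M$, each edge corresponds to a connected component of the 1-skeleton $\{x\in M:\dim Gx\le 1\}$ (a $G$-invariant $2$-sphere joining a pair of fixed points), and $\alpha$ reads off the tangential characters. Conditions (1) and (2) of Subsection~\ref{graph1} are verified in those references, and directly from the construction
\[
g_{(\Gamma_M,\alpha)} = \sum_{p\in V_{\Gamma_M}}\prod_{x\in E_p}\alpha(x) = \sum_{p\in M^G}[\tau_pM] = g,
\]
which finishes the argument.

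The only genuine obstacle is the passage from $\IM\phi_n$ to $\mathfrak{M}_n$, since the graph-theoretic machinery of \cite{bl} and \cite[Section 2]{l3} is formulated for effective $G$-actions; the reduction to the submanifold $M'$ above, justified by the faithfulness of each $\tau_pM$, is what closes this small gap and should be stated explicitly in the write-up.
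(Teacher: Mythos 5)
Your proposal is correct and follows exactly the route the paper takes: the theorem is stated there as an immediate consequence of \cite[Proposition 2.2]{l2} (sufficiency) and the graph construction of \cite{bl}, \cite[Section 2]{l3} (necessity). Your extra observation---that faithfulness of $g$ lets one pass from a representative in $\mathcal{Z}_n(G)$ to one in $\mathfrak{M}_n$, so that the effective-action machinery applies---is a sound closing of a step the paper leaves implicit, but it does not change the argument.
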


Now by $\Lambda(G_n)$ we denote the set of all abstract
$G_n$-colored graphs $(\Gamma, \alpha)$.

\begin{defn}
Two abstract $G_n$-colored graphs $(\Gamma_1, \alpha_1)$ and
$(\Gamma_2, \alpha_2)$ in $\Lambda(G_n)$ are said to be {\em
equivalent} if $g_{(\Gamma_1, \alpha_1)}=g_{(\Gamma_2, \alpha_2)}$,
denoted by $(\Gamma_1, \alpha_1)\sim(\Gamma_2, \alpha_2)$.
\end{defn}

On the coset $\Lambda(G_n)/\sim$, define the addition $+$ as
follows:
$$\{(\Gamma_1, \alpha_1)\}+\{(\Gamma_2, \alpha_2)\}:=\{(\Gamma_1, \alpha_1)\sqcup(\Gamma_2, \alpha_2)\}$$
where $\sqcup$ means the disjoint union.  Then $\Lambda(G_n)/\sim$
forms an abelian group, where the zero element in
$\Lambda(G_n)/\sim$ is the class of the abstract $G_n$-colored graph
with zero $G_n$-coloring polynomial. By Theorem~\ref{color poly} we
have that
\begin{cor}\label{graph}
$\mathcal{Z}_n(G_n)$ is isomorphic to $\Lambda(G_n)/\sim$.
\end{cor}

\begin{defn}\label{prime graph}
An abstract $G_n$-colored graph $(\Gamma, \alpha)$ in $\Lambda(G_n)$
with $g_{(\Gamma, \alpha)}\not=0$ is said to be {\em prime} if all
$\alpha(E_v), v\in V_\Gamma$,  are distinct.
\end{defn}
It is easy to see that a prime abstract $G_n$-colored graph
$(\Gamma, \alpha)$ has the property that $|V_\Gamma|$ equals to the
number of monomials of $g_{(\Gamma, \alpha)}$. Now let us look at
nonzero classes in $\Lambda(G_n)/\sim$.

\begin{lem}\label{prime}
Each nonzero class of $\Lambda(G_n)/\sim$ contains a prime abstract
$G_n$-colored graph as its representative.
\end{lem}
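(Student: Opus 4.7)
The plan is to prove the lemma by a local combinatorial surgery: given any non-prime representative $(\Gamma, \alpha)$ of a given nonzero class, I will produce an equivalent $G$-colored graph $(\Gamma', \alpha')$ with $|V_{\Gamma'}| = |V_\Gamma| - 2$. Iterating this surgery strictly decreases the number of vertices, so it must terminate, and the terminal graph will be prime.

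I would begin by fixing two distinct vertices $p, q$ with $\alpha(E_p) = \alpha(E_q)$, which exist by non-primality (Definition~\ref{prime graph}). Condition~(1) makes both monomials squarefree, so they represent the same set $C$ of $n$ distinct colors. For each $c \in C$, let $e_p^c$ (resp.\ $e_q^c$) be the unique edge at $p$ (resp.\ $q$) of color $c$, and let $p^c$, $q^c$ denote their other endpoints. The surgery I propose is: if $e_p^c \neq e_q^c$, delete both edges and insert a new edge from $p^c$ to $q^c$ of color $c$; if $e_p^c = e_q^c$ (which can only happen when this edge joins $p$ to $q$), simply delete it. Finally, remove $p$ and $q$ from the vertex set, obtaining $(\Gamma', \alpha')$.

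Next I would verify that $(\Gamma', \alpha')$ is again a $G$-colored graph of valence $n$ with the same coloring polynomial. Each vertex $v \notin \{p, q\}$ keeps its original set of incident colors: each old edge at $v$ either survives or is replaced by a new edge at $v$ of the same color, so the valence is preserved and condition~(1) is immediate. No loops appear, since $p^c = q^c$ would force that vertex to carry two color-$c$ edges, contradicting condition~(1). For condition~(2) at a new edge $(p^c, q^c)$ of color $c$, the original congruences give $\alpha(E_{p^c}) \equiv \alpha(E_p) \pmod{c}$ and $\alpha(E_{q^c}) \equiv \alpha(E_q) \pmod{c}$, which together with $\alpha(E_p) = \alpha(E_q)$ yield the required congruence. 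Finally, the contributions $\prod_{x \in E_p} \alpha(x) = \prod_{x \in E_q} \alpha(x)$ to the coloring polynomial cancel in $\Z_2$, so $g_{(\Gamma', \alpha')} = g_{(\Gamma, \alpha)}$ and $(\Gamma', \alpha') \sim (\Gamma, \alpha)$.

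Iterating this surgery, the vertex count strictly decreases, so the process terminates at a graph $(\Gamma_0, \alpha_0)$ in which no two vertices share the same monomial $\alpha(E_\bullet)$; because $g_{(\Gamma_0, \alpha_0)} = g_{(\Gamma, \alpha)} \ne 0$, the terminal graph is nonempty and hence prime by Definition~\ref{prime graph}. The main obstacle I anticipate is the careful case analysis around the degenerate situation where $p$ and $q$ are already joined by one or more edges: one must check that each such shared edge is detected by the $e_p^c = e_q^c$ branch, that the two branches exhaustively partition $C$, and that valences and faithfulness balance at endpoints to which several of the surgery's new edges are simultaneously rerouted.
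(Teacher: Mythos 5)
Your surgery — deleting $p$ and $q$ and rerouting each pair of same-colored half-edges into a single new edge (or deleting a shared edge $pq$ outright) — is exactly the ``connected sum of $(\Gamma,\alpha)$ with itself at $p$ and $q$'' used in the paper's proof, and your verifications of valence, conditions (1)--(2), loop-freeness, and cancellation of the two equal monomials mod $2$ are all sound. The proposal is correct and follows essentially the same route as the paper, just with the degenerate adjacency cases spelled out more explicitly.
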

\begin{proof}
Let $(\Gamma, \alpha)$ be an abstract $G_n$-colored graph in
$\Lambda(G_n)$ with $g_{(\Gamma, \alpha)}\not=0$. Suppose that
$(\Gamma, \alpha)$ is not prime. Then there must be two vertices $v$
and $u$ such that $\alpha(E_v)=\alpha(E_u)$. Now let us perform a
``connected sum'' of $(\Gamma, \alpha)$ to itself at $v$ and $u$ as
follows: first cut out two vertices $v$ and $u$, and then glue $n$
edges $\{e_v^1, ..., e_v^n\}$ removed $v$ to $n$ edges $\{e_u^1,
..., e_u^n\}$ removed $u$  along sectional endpoints respectively in
such a way that two $e_v^i$ and $e_u^j$ will be glued together as
long as $\alpha(e_v^i)=\alpha(e_u^j)$. Then it is easy to see that
the resulting graph $(\Gamma', \alpha')$ is still an abstract
$G_n$-colored graph in $\Lambda(G_n)$. This decreases two vertices
$v$ and $u$ from $(\Gamma, \alpha)$, and clearly $(\Gamma',
\alpha')\sim (\Gamma, \alpha)$. Since $\Gamma$ is finite, this
procedure can be ended until we obtain the desired prime abstract
$G_n$-colored graph.
\end{proof}

\begin{rem}
 For two abstract $G_n$-colored graphs $(\Gamma_1, \alpha_1)$ and $(\Gamma_2, \alpha_2)$,
 if there are two vertices $v_1\in V_{\Gamma_1}$ and $v_2\in V_{\Gamma_2}$ such that
$\alpha_1(E_{v_1})=\alpha_2(E_{v_2})$, in a similar way as shown   in the proof of Lemma~\ref{prime},
we can define a connected sum $(\Gamma_1, \alpha_1)\sharp_{v_1, v_2}(\Gamma_2, \alpha_2)$
of $(\Gamma_1, \alpha_1)$ and $(\Gamma_2, \alpha_2)$ at $v_1$ and $v_2$. Then we can obtain that
$$ \{(\Gamma_1, \alpha_1)\sharp_{v_1, v_2}(\Gamma_2, \alpha_2)\}=
\{(\Gamma_1, \alpha_1)\}+\{(\Gamma_2, \alpha_2)\}=\{(\Gamma_1,
\alpha_1)\sqcup(\Gamma_2, \alpha_2)\}.$$ This also implies that for
$\{M_1\}, \{M_2\}\in \mathcal{Z}_n(G_n)$, if there are two fixed
points $p_1\in M_1^{G_n}$ and $p_2\in M_2^{G_n}$ such that the
tangent $G_n$-representations at $p_1$ and $p_2$ are isomorphic,
then we can perform an equivariant connected sum $M_1\sharp_{p_1,
p_2}M_2$ of $M_1$ and $M_2$ at $p_1$ and $p_2$, and in particular,
$$\{M_1\sharp_{p_1, p_2}M_2\}=\{M_1\}+\{M_2\}=\{M_1\sqcup M_2\}.$$
\end{rem}

\subsection{Small covers}\label{small}
 In \cite{dj} Davis and Januszkiewicz introduced and studied the topological version of
real toric variety, i.e.,  ``small cover''. This gives another link between the equivariant topology
and the combinatorics of simple convex polytopes.

\vskip .1cm An $n$-dimensional {\em small cover} $\pi:
M^n\longrightarrow P^n$ is a smooth closed $n$-manifold $M^n$ with a
locally standard $G_n$-action such that its orbit space is
homeomorphic to a simple convex $n$-polytope $P^n$, where a locally
standard $G_n$-action on $M^n$ means that this $G_n$-action on $M^n$
is locally isomorphic to a faithful representation of $G_n$ on
${\Bbb R}^n$. A small cover is a special 2-torus manifold. Each
small cover $\pi: M^n \longrightarrow P^n$ determines a
characteristic function $\lambda$ (here we call it a {\em
$G_n$-coloring}) on $P^n$, defined by mapping all facets (i.e.,
$(n-1)$-dimensional faces) of $P^n$ to nontrivial elements of
$\Hom(\Z_2, G_n)$ such that $n$ facets meeting at each vertex are
mapped to $n$ linearly independent elements. There are many
fascinating characteristics for $\pi: M^n\longrightarrow P^n$,
saying that the algebraic topology of $M^n$ is essentially
consistent with the algebraic combinatorics of $(P^n, \lambda)$ in
many aspects, and $M^n$ can be recovered by the pair $(P^n,
\lambda)$. For example, the mod 2 Betti numbers $(b_0, b_1, ...,
b_n)$ of $M^n$ agree with the $h$-vector $(h_0, h_1, ..., h_n)$ of
$P^n$. This leads us to one of reasons why we posed the Conjecture
$(\star)$ in \cite{l1}. The other one is that in \cite{br}
Buchstaber and Ray gave  the proof of the Conjecture $(\star)$ in
non-equivariant case, i.e.,  each $n$-dimensional class of
$\mathfrak{N}_*$ contains a small cover  as its representative,
where $\mathfrak{N}_*$ denotes the Thom unoriented borism ring.

 \vskip .2cm

 Now suppose that $\pi: M^n\longrightarrow P^n$ is a small cover, and
 $\lambda: \mathcal{F}(P^n)\longrightarrow
 \Hom(\Z_2, G_n)$ is its characteristic function, where $\mathcal{F}(P^n)$ consists of all facets of $P^n$.
Given a vertex $v$ of $P^n$, since $P^n$ is simple, there are $n$
facets $F_1, ..., F_n$ in $\mathcal{F}(P^n)$ such that
$v=F_1\cap\cdots\cap F_n$. Then, by the definition of $\lambda$, we
see  that the vertex $v$  determines a monomial
$\prod_{i=1}^n\lambda(F_i)$  of degree $n$ in
$\Z_2[\widehat{\Hom(\Z_2, G_n)}]$, whose dual by the pairing
(\ref{pairing}) is faithful in $\Z_2[\widehat{\Hom(G_n, \Z_2)}]$.
Here $\prod_{i=1}^n\lambda(F_i)$ is called the {\em $G_n$-coloring
monomial at $v$},  denoted by $\lambda_v$.  Moreover, all vertices
in the vertex set $V_{P^n}$ of $P^n$ via $\lambda$ give a polynomial
  $\sum_{v\in V_{P^n}}\lambda_v$ of degree $n$ in $\Z_2[\widehat{\Hom(\Z_2, G_n)}]$, which is denoted by
 $g_{(P^n, \lambda)}$, and is called  $g_{(P^n, \lambda)}$ the {\em $G_n$-coloring polynomial}
 of $(P^n, \lambda)$.

\begin{rem}
Geometrically, each degree-one factor $\lambda(F_i)$ of the monomial
$\lambda_v$ at the vertex $v=F_1\cap\cdots\cap F_n$ is actually the
normal representation to the characteristic submanifold
$\pi^{-1}(F_i)$ fixed by the $\Z_2$-subgroup of $G_n$ corresponding
to the factor $\lambda(F_i)$. Then, the fixed point $\pi^{-1}(v)$ is
in the intersection of those $n$ characteristic submainfolds
$\pi^{-1}(F_i), i=1, ..., n$, determined by $n$ degree-one factors
of $\lambda_v$. This means that the dual of $\lambda_v$ by the
pairing (\ref{pairing}) is exactly the tangential
$G_n$-representation at the fixed point $\pi^{-1}(v)$ (see also
\cite[Proposition 4.1]{l2}). This is also shown in \cite[Lemmas 5.48
and 5.50]{bp} for the case of quasi-toric manifolds in terms of
matrices.
\end{rem}

 Now let $(\Gamma_M, \alpha)$ be the $G_n$-colored graph of $\pi:
M^n\longrightarrow P^n$, and let $g_{(\Gamma_M, \alpha)}$ be the
$G_n$-coloring polynomial of $(\Gamma_M, \alpha)$. We know from
\cite[Proposition 4.1; Remark 4]{l2} that $\Gamma_M$ is exactly the
1-skeleton of $P^n$, and both $\lambda$ and $\alpha$ determine each
other. Specifically, let $v$ be a vertex of $P^n$. 
Then there are $n$ facets $F_1, ..., F_n$ in $P^n$ and $n$ edges (or
1-faces) $e_1, ..., e_n$ in $\Gamma_M$ such that
$v=F_1\cap\cdots\cap F_n= e_1\cap\cdots \cap e_n$. With no loss of
generality, assume that $e_i$ is the intersection of $F_1, ...,
F_{i-1}, F_{i+1}, ..., F_n$, so $F_i$ contains $n-1$ edges $e_1,
..., e_{i-1}, e_{i+1}, ..., e_n$ except for $e_i$. Then both
$\lambda$ and $\alpha$ are dual in the following sense
$$\langle\lambda(F_i), \alpha(e_j)\rangle=\begin{cases}
1 & \text{ if } i=j\\
0 & \text{ if } i\not= j
\end{cases}$$
which implies that the basis $\{\lambda(F_1), ..., \lambda(F_n)\}$
of $\Hom({\Bbb Z}_2, G_n)$ is the dual basis of $\{\alpha(e_1), ...,
\alpha(e_n)\}$ in $\Hom(G_n, {\Bbb Z}_2)$. This gives
\begin{prop}\label{small}
$g_{(P^n, \lambda)}$ is the dual polynomial of $g_{(\Gamma_M,
\alpha)}$.
\end{prop}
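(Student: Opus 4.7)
The proof will be essentially a vertex-by-vertex matching that invokes the geometric dictionary between the small cover $\pi\colon M^n\to P^n$ and its $G$-colored graph $(\Gamma_M,\alpha)$ already recorded in the paragraph preceding the proposition. The plan has three steps: (i) match vertices of $\Gamma_M$ with vertices of $P^n$ via $\pi$; (ii) at each matched pair $(p,v)$, identify the monomial $\prod_{x\in E_p}\alpha(x)$ contributed to $g_{(\Gamma_M,\alpha)}$ as the faithful $G$-polynomial monomial whose dual (in the sense of Subsection~\ref{dual poly}) is $\lambda_v$; (iii) sum over all vertices.

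For step (i), since the $G$-action on $M^n$ is locally standard, $\pi$ restricts to a bijection $M^G\to V_{P^n}$, and by construction $V_{\Gamma_M}=M^G$; so each $p\in V_{\Gamma_M}$ corresponds to a unique $v=\pi(p)\in V_{P^n}$. For step (ii), fix such a pair and let $F_1,\ldots,F_n$ be the facets of $P^n$ meeting at $v$, with labels $s_i:=\lambda(F_i)\in\Hom(\Z_2,G)$, so that $\lambda_v=s_1\cdots s_n$ and $\{s_1,\ldots,s_n\}$ is a basis of $\Hom(\Z_2,G)$. The $n$ edges of $P^n$ incident to $v$ are $E_i=\bigcap_{j\neq i}F_j$, and they lift under $\pi$ to $n$ invariant arcs of the $1$-skeleton of $M^n$ issuing from $p$, giving a natural bijection between $E_p$ and $\{F_1,\ldots,F_n\}$. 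Now the crucial geometric input, already stated (with reference to \cite[Proposition 4.1]{l2}) immediately before the proposition, is that the tangent $G$-representation at $p$ decomposes as a sum of one-dimensional summands indexed by the $F_i$'s, with the summand corresponding to $F_i$ carrying weight $t_i\in\Hom(G,\Z_2)$ determined by $\langle s_j,t_i\rangle=\delta_{ij}$; in other words $\{t_1,\ldots,t_n\}$ is the dual basis of $\{s_1,\ldots,s_n\}$ under the pairing (\ref{pairing}). Since the edge of $\Gamma_M$ corresponding to $F_i$ is exactly the one carrying the weight of this $i$-th summand, we obtain $\alpha(x_i)=t_i$ and hence $\prod_{x\in E_p}\alpha(x)=t_1\cdots t_n$, which by definition of the dual $G$-polynomial is precisely the monomial of $\Z_2[\widehat{\Hom(G,\Z_2)}]$ whose dual is $s_1\cdots s_n=\lambda_v$.

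For step (iii), applying the monomial-wise dualising assignment of Subsection~\ref{dual poly} to each summand and adding yields
$$g_{(\Gamma_M,\alpha)}^{\,*}=\sum_{p\in V_{\Gamma_M}}\Bigl(\prod_{x\in E_p}\alpha(x)\Bigr)^{\!*}=\sum_{v\in V_{P^n}}\lambda_v=g_{(P^n,\lambda)},$$
which is the desired identity. The only step with genuine geometric content is the dual-basis identification in step (ii); everything else is indexing, so I do not expect any real obstacle, as the main ingredient has already been put in place in the discussion preceding the proposition.
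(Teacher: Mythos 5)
Your proof is correct and follows the same route as the paper, which in fact offers no separate proof of this proposition: it treats it as an immediate consequence of the preceding paragraph, namely that $g_{(\Gamma_M,\alpha)}=\phi_n(\{M\})=\sum_{p\in M^G}[\tau_pM]$ and that the dual of $\lambda_v$ under the pairing (\ref{pairing}) is exactly the tangent $G$-representation at $\pi^{-1}(v)$ (citing \cite[Proposition 4.1]{l2}). Your write-up merely makes the vertex-by-vertex bookkeeping explicit, which is harmless and arguably clarifying.
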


\begin{rem}\label{skeleton}
 Given an automorphism $\sigma$ of $\Hom(\Z_2, G_n)$,  we can induce
 a new $G_n$-coloring $\sigma\circ \lambda$ on $P^n$ from $\lambda: \mathcal{F}(P^n)\longrightarrow
 \Hom(\Z_2, G_n)$.
Furthermore, we may see easily that $g_{(P^n, \sigma\circ
\lambda)}=\sigma(g_{(P^n,\lambda)})$.
\end{rem}

\begin{example} \label{exam-basis}
Regard $S^1$ as the unit circle $\{ z\in {\Bbb C}\ \big| |z|=1 \}$
in ${\Bbb C}$ and ${\Bbb R} P^2$ as the projective plane ${\Bbb R}
P({\Bbb C}\oplus {\Bbb R}) = \{\, [v,w]\big| v\in {\Bbb C}, w\in
{\Bbb R}\,
      \}$ in ${\Bbb C}\oplus {\Bbb R}$, we then construct
a $(\mathbb{Z}_2)^3$-action on $M^3=S^1\times \mathbb{R}P^2$
defined by the following three commutative involutions
 \begin{align*}
           t_1: (z,[v,w]) &\longmapsto (\bar{z}, [zv,w]) \\
            t_2: (z,[v,w]) &\longmapsto (z, [-\bar{z}\bar{v},w])\\
             t_3: (z,[v,w]) &\longmapsto (\bar{z}, [-zv,w]).
        \end{align*}
We know from \cite[Lemma 4.3]{ly} that $M^3$ is a 3-dimensional small cover whose orbit is a 3-sided prism
$P^3(3)$ with a $({\Bbb Z}_2)^3$-coloring $\lambda$, and its colored graph $(\Gamma_M, \alpha)$ is the 1-skeleton of $P^3(3)$,  shown as follows:
\[\begin{picture}(0,0)%
\includegraphics{l.pstex}%
\end{picture}%
\setlength{\unitlength}{1381sp}%
\begingroup\makeatletter\ifx\SetFigFont\undefined%
\gdef\SetFigFont#1#2#3#4#5{%
  \reset@font\fontsize{#1}{#2pt}%
  \fontfamily{#3}\fontseries{#4}\fontshape{#5}%
  \selectfont}%
\fi\endgroup%
\begin{picture}(10608,6147)(2068,-16567)
\put(9751,-11836){\makebox(0,0)[lb]{\smash{{\SetFigFont{6}{7.2}{\rmdefault}{\mddefault}{\updefault}$\rho_3$}}}}
\put(3151,-16486){\makebox(0,0)[lb]{\smash{{\SetFigFont{6}{7.2}{\rmdefault}{\mddefault}{\updefault}$(P^3(3), \lambda)$}}}}
\put(10351,-16486){\makebox(0,0)[lb]{\smash{{\SetFigFont{6}{7.2}{\rmdefault}{\mddefault}{\updefault}$(\Gamma_M, \alpha)$}}}}
\put(3601,-11386){\makebox(0,0)[lb]{\smash{{\SetFigFont{6}{7.2}{\rmdefault}{\mddefault}{\updefault}$\rho^*_1$}}}}
\put(5626,-11461){\makebox(0,0)[lb]{\smash{{\SetFigFont{6}{7.2}{\rmdefault}{\mddefault}{\updefault}$\rho_2^*+\rho^*_3$}}}}
\put(2701,-13186){\makebox(0,0)[lb]{\smash{{\SetFigFont{6}{7.2}{\rmdefault}{\mddefault}{\updefault}$\rho^*_2$}}}}
\put(4276,-13186){\makebox(0,0)[lb]{\smash{{\SetFigFont{6}{7.2}{\rmdefault}{\mddefault}{\updefault}$\rho^*_3$}}}}
\put(5401,-15661){\makebox(0,0)[lb]{\smash{{\SetFigFont{6}{7.2}{\rmdefault}{\mddefault}{\updefault}$\rho^*_1+\rho^*_2$}}}}
\put(10351,-10636){\makebox(0,0)[lb]{\smash{{\SetFigFont{6}{7.2}{\rmdefault}{\mddefault}{\updefault}$\rho_2+\rho_3$}}}}
\put(11476,-11836){\makebox(0,0)[lb]{\smash{{\SetFigFont{6}{7.2}{\rmdefault}{\mddefault}{\updefault}$\rho_2$}}}}
\put(8776,-13036){\makebox(0,0)[lb]{\smash{{\SetFigFont{6}{7.2}{\rmdefault}{\mddefault}{\updefault}$\rho_1$}}}}
\put(10351,-13561){\makebox(0,0)[lb]{\smash{{\SetFigFont{6}{7.2}{\rmdefault}{\mddefault}{\updefault}$\rho_1$}}}}
\put(9451,-15511){\makebox(0,0)[lb]{\smash{{\SetFigFont{6}{7.2}{\rmdefault}{\mddefault}{\updefault}$\rho_3$}}}}
\put(12376,-13036){\makebox(0,0)[lb]{\smash{{\SetFigFont{6}{7.2}{\rmdefault}{\mddefault}{\updefault}$\rho_1$}}}}
\put(11701,-15511){\makebox(0,0)[lb]{\smash{{\SetFigFont{6}{7.2}{\rmdefault}{\mddefault}{\updefault}$\rho_1+\rho_2$}}}}
\put(12676,-14611){\makebox(0,0)[lb]{\smash{{\SetFigFont{6}{7.2}{\rmdefault}{\mddefault}{\updefault}$\rho_1+\rho_2+\rho_3$}}}}
\end{picture}%
\centering   \]
   The corresponding $G_3$-coloring polynomials $g_{(\Gamma_M, \alpha)}$ and $g_{(P^3(3), \lambda)}$ are
   exactly the faithful polynomial $g$ and its dual polynomial $g^*$ respectively, as expressed in
   Example~\ref{example-dual}.
\end{example}

Generalized real Bott manifolds belong to a class of nicely behaved
small covers, which were introduced  and studied in \cite{cmo, cms,
cms2, km, m}. A {\em generalized real Bott tower} of height $n$ is a
sequence of ${\Bbb R}P^{n_i}$-bundles with $n_i\geq 1$:
\[
\begin{CD} B^{\Bbb R}_n@ >{\pi_n}>> B^{\Bbb R}_{n-1}@>{\pi_{n-1}}>> \cdots@>{\pi_2}>>B^{\Bbb R}_{1}@
>{\pi_1}>>B^{\Bbb R}_{0}=\{\text{a point}\}
\end{CD}
\]
where each $\pi_i: B^{\Bbb R}_i\longrightarrow B^{\Bbb R}_{i-1}$ for $i=1, ..., n$ is the projectivization of a Whitney sum of $n_i+1$ real line bundles over $B^{\Bbb R}_i$.  We call $B^{\Bbb R}_i$ an {\em $i$-stage generalized real Bott
manifold} or a {\em generalized real Bott manifold of height $i$}.  It is easy to check that  the $i$-stage generalized real Bott
manifold $B^{\Bbb R}_i$ is a small cover over $\Delta^{n_1}\times\cdots\times\Delta^{n_i}$ where $\Delta^{n_j}$ is an $n_j$-dimensional simplex. Conversely, it was shown in \cite{cms} that a small cover over a product of simplices is  a generalized real Bott manifold.

\vskip .1cm
In the special case where $n_i=1$ for all $i$,  $B^{\Bbb R}_i$ is called an {\em $i$-stage real Bott manifold}, and it is a small cover over an $i$-cube.

\vskip .1cm Much interesting work related to generalized real Bott
manifolds has been carried on (see~\cite{cmo, cms, cms2, km, m, y}).
For example, it was proved in \cite{km} that the cohomological
rigidity for real Bott manifolds holds, and we also know from
\cite{cmo} that real Bott manifolds provide examples of flat
riemannian manifolds, and they are also related to acyclic digraphs.

\subsection{$G_n$-colorings on the product of simple convex
polytopes}\label{product-coloring}

We note that the natural identification of $G_n$ with  $\Hom(\Z_2, G_n)$ gives a
correspondence $\Theta$ between their subgroups. 

\vskip .1cm


The following is a generalization of $G_n$-colorings on simple
convex $n$-polytopes.
\begin{defn}\label{coloring-gen}
Let $P^k$ be a simple convex $k$-polytope with $k\leq n$. A {\em
$G_n$-coloring} $\lambda$ on $P^k$ is a map from all facets of $P^k$
to  $\Hom(\Z_2, G_n)$ such that $\lambda$ maps $k$ facets at each
vertex of $P^k$ into $k$ linearly independent elements in
$\Hom(\Z_2, G_n)$.  In particular, a $G_n$-coloring $\lambda$ on
$P^k$ is said to be {\em nice} if there is a subgroup $L_\lambda$ of
rank $n-k$ in $\Hom(\Z_2, G_n)$ such that
$\overline{\lambda}=\ell\circ \lambda$ maps $k$ facets at each
vertex of $P^k$ into $k$ linearly independent elements in the
quotient group $\Hom(\Z_2, G_n)/L_\lambda$, where $\ell$ denotes the
natural quotient map from $\Hom(\Z_2, G_n)$ to $\Hom(\Z_2,
G_n)/L_\lambda$.
\end{defn}

Clearly, each $G_n$-coloring $\lambda$ on $P^k$ can still determine
a polynomial as before, which is also denoted by $g_{(P^k,
\lambda)}$, and is called the $G_n$-coloring polynomial of $(P^k,
\lambda)$. \vskip .2cm

For a nice $G_n$-coloring $\lambda$ on $P^k$, we can choose a
subgroup $C_\lambda$ of rank $k$ in $\Hom(\Z_2, G_n)$ such that
$C_\lambda\cong \Hom(\Z_2, G_n)/L_\lambda$ and  $L_\lambda\oplus
C_\lambda=\Hom(\Z_2, G_n)$.
 Write $G'=\Theta^{-1}(C_\lambda)$, then
$\overline{\lambda}$ is actually a $G'$-coloring on $P^k$, so it is also
regarded as a $(\Z_2)^k$-coloring on $P^k$ since $G'$ is isomorphic
to $(\Z_2)^k$.
 Since $\Hom(\Z_2, (\Z_2)^k)\subseteq \Hom(\Z_2, G_n)$ for
$k\leq n$,
 each $(\Z_2)^k$-coloring on $P^k$  can always be regarded as a nice $G_n$-coloring on $P^k$.

\vskip .2cm

Let $P^n$ be the product $P_1^{n_1}\times P_2^{n_2}$ of two simple
convex polytopes $P_1^{n_1}$ and  $P_2^{n_2}$ with $n_1+n_2=n$.
Suppose that  $P^n$ admits a $G_n$-coloring $\lambda$. Then for
$i=1,2$,  each $P_i^{n_i}$ naturally inherits a $G_n$-coloring
$\lambda_i$ in such a way that for each facet $F$ of $P_i^{n_i}$,
\begin{equation}\label{def1}
\lambda_i(F)=
\begin{cases}
\lambda(F\times P_2^{n_2}) & \text{ if $F$ is a facet of $P_1^{n_1}$}\\
\lambda(P_1^{n_1}\times F) & \text{ if $F$ is a facet of
$P_2^{n_2}$}
\end{cases}
\end{equation}
which is called the {\em restriction} to $P_i^{n_i}$ of $\lambda$.
Note that all facets of $P_1^{n_1}\times P_2^{n_2}$ consist of the
polytopes of the forms $F_1\times P_2^{n_2}$ and $P_1^{n_1}\times
F_2$ where $F_i$ is a facet of $P_i^{n_i}$.

\begin{lem}\label{pro}
For $i=1,2$,  the $G_n$-coloring $\lambda_i$ on $P^{n_i}_i$ is nice.
\end{lem}

\begin{proof}
Let $v_i$ ($i=1,2$) be a vertex of $P_i^{n_i}$, and let $F_1^{(i)},
..., F_{n_i}^{(i)}$ be $n_i$ facets adjacent to $v_i$ in
$P_i^{n_i}$. Then $F_1^{(1)}\times P_2^{n_2}, ...,
F_{n_1}^{(1)}\times P_2^{n_2}, P_1^{n_1}\times F_1^{(2)}, ...,
P_1^{n_1} \times F_{n_2}^{(2)}$ are $n$ facets at vertex $(v_1,
v_2)$ of $P^n$. So $\lambda_1(F_1^{(1)}), ...,
\lambda_1(F_{n_1}^{(1)}), \lambda_2(F_1^{(2)}), ...,
\lambda_2(F_{n_2}^{(2)})$ form a basis of $\H(\Z_2, G_n)$. When
$v_1$ runs over all vertices of $P_1^{n_1}$ and $v_2$ is fixed,
since each $(v_1, v_2)$ is always a vertex of $P^n$,  take
$L_{\lambda_1}$ as $\text{Span}\{\lambda_2(F_1^{(2)}), ...,
\lambda_2(F_{n_2}^{(2)})\}$, we see easily that
$\overline{\lambda_1}$ maps $n_1$ facets at each vertex of $P^{n_1}$
into $n_1$ linearly independent elements in $\H(\Z_2,
G_n)/L_{\lambda_1}$. So $\lambda_1$ is a nice $G_n$-coloring. In a
similar way, we have that $\lambda_2$ is a nice $G_n$-coloring, too.
\end{proof}

\begin{rem}
   By the proof of Lemma~\ref{pro}, we can choose  $L_{\lambda_1}$ and $L_{\lambda_2}$
    via the $G_n$-coloring at a vertex $(v_1, v_2)$ of $P^n$ such that
    $L_{\lambda_1}\oplus L_{\lambda_2}=\H(\Z_2, G_n)$. In fact, let $F_1^{(1)}\times P_2^{n_2}, ...,
F_{n_1}^{(1)}\times P_2^{n_2}, P_1^{n_1}\times F_1^{(2)}, ...,
P_1^{n_1} \times F_{n_2}^{(2)}$ be $n$ facets meeting at $(v_1,
v_2)$. Then we can choose $L_{\lambda_i}$ as $\text{Span}\{\lambda_i(F_1^{(i)}), ...,
\lambda_i(F_{n_i}^{(i)})\}, i=1,2$, as desired.
\end{rem}

\begin{prop}[Product formula]\label{formula}
$$g_{(P_1^{n_1}\times P_2^{n_2}, \lambda)}=g_{(P_1^{n_1}, \lambda_1)}g_{(P_2^{n_2},
\lambda_2)}.$$
\end{prop}

\begin{proof}
First we see  from the proof of Lemma~\ref{pro} that
$$g_{(P_1^{n_1}\times P_2^{n_2}, \lambda)}=\sum_{(v_1, v_2)\in V_{P_1^{n_1}\times
P_2^{n_2}}}\lambda_{(v_1,v_2)}=\sum_{v_2\in
V_{P^{n_2}_2}}\sum_{v_1\in V_{P^{n_1}_1}}\lambda_{(v_1,v_2)}$$ where
$V_P$ denotes the vertex set of a simple convex polytope $P$. Since
$\lambda_{(v_1,v_2)}=\lambda_{1v_1}\lambda_{2v_2}$, we have that
\begin{align*}g_{(P_1^{n_1}\times P_2^{n_2}, \lambda)}
&=\sum_{v_2\in V_{P^{n_2}_2}}\sum_{v_1\in
V_{P^{n_1}_1}}\lambda_{1v_1}\lambda_{2v_2}\\
&=\Big(\sum_{v_2\in
V_{P^{n_2}_2}}\lambda_{2v_2}\Big)\Big(\sum_{v_1\in
V_{P^{n_1}_1}}\lambda_{1v_1}\Big)\\
&=g_{(P_1^{n_1}, \lambda_1)}g_{(P_2^{n_2}, \lambda_2)} \end{align*}
as desired.
\end{proof}

Proposition~\ref{formula} also implies the following result.

\begin{cor}\label{indecom}
 Suppose that a simple convex $n$-polytope $P^n$ admits a $G_n$-coloring $\lambda$
 such that $g_{(P^n, \lambda)}$ is indecomposable in $\Z_2[\widehat{\Hom(\Z_2, G_n)}]$.
 Then $P^n$ is indecomposable too.
\end{cor}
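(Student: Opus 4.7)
The plan is to prove the contrapositive. I would assume that $P^n$ is decomposable, write $P^n = P_1 \times P_2$ with $\dim P_i = n_i \geq 1$ and $n_1 + n_2 = n$, and then produce a nontrivial factorization of $g_{(P^n,\lambda)}$. Since the required induced colorings $\lambda_1$ on $P_1$ and $\lambda_2$ on $P_2$ have already been constructed in the paragraph immediately preceding the corollary (by restricting $\lambda$ to facets of the form $F\times P_2$ and $P_1\times F$ respectively), this is where I would start. Note that each $\lambda_i$ is a genuine $G$-coloring on $P_i$: at any vertex $(v_1,v_2)$ of $P^n$, the facets through $(v_1,v_2)$ split into the facets of $P_1\times P_2$ coming from the $n_1$ facets of $P_1$ at $v_1$ and the $n_2$ facets of $P_2$ at $v_2$, and linear independence at $(v_1,v_2)$ forces linear independence at $v_1$ and at $v_2$ separately.

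Next, I would invoke the product formula noted between Lemma~\ref{formula} and the statement of the corollary, namely $g_{(P^n,\lambda)} = g_{(P_1,\lambda_1)}\, g_{(P_2,\lambda_2)}$. This formula is the heart of the argument and needs only the observation that every vertex of $P_1\times P_2$ is uniquely of the form $(v_1,v_2)$ with $v_i\in V_{P_i}$, and the $G$-coloring monomial at $(v_1,v_2)$ factors as $\lambda_{1,v_1}\cdot \lambda_{2,v_2}$; summing over all such vertices gives the product on the right.

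From here, the conclusion is immediate. Because $g_{(P^n,\lambda)}$ is assumed indecomposable, it is in particular nonzero, so both factors $g_{(P_i,\lambda_i)}$ are nonzero in the integral domain $\Z_2[\widehat{\Hom(\Z_2,G)}]$; moreover each $g_{(P_i,\lambda_i)}$ is homogeneous of strictly positive degree $n_i\geq 1$ and hence a non-unit. This exhibits a nontrivial product decomposition of $g_{(P^n,\lambda)}$, contradicting the hypothesis.

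I do not expect any real obstacle here: the only non-formal input is the product formula, which the authors explicitly assert holds in this setting, and the verification that the induced colorings $\lambda_i$ satisfy the linear-independence axiom at each vertex. The argument is purely a contrapositive bookkeeping of the product formula, with the indecomposability hypothesis on $g_{(P^n,\lambda)}$ ruling out the degenerate case where one of the factors could vanish.
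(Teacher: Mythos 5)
Your proposal is correct and is essentially the paper's own argument: the authors likewise observe that a decomposition $P^n=P_1\times P_2$ induces colorings $\lambda_i$ for which the product formula $g_{(P^n,\lambda)}=g_{(P_1,\lambda_1)}g_{(P_2,\lambda_2)}$ holds, and the corollary is the contrapositive of this. Your added care about the factors being nonzero non-units is a harmless elaboration of the same reasoning.
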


Now for a decomposable simple convex $n$-polytope
$P^n=P_1^{n_1}\times P_2^{n_2}$ with $n=n_1+n_2$. Suppose that
$P_i^{n_i} (i=1,2)$ admits a nice $G_n$-coloring $\lambda_i$ such
that there are two subgroups $L_{\lambda_1}$ and $L_{\lambda_2}$
corresponding to $\lambda_1$ and $\lambda_2$ respectively which
satisfy $L_{\lambda_1}\oplus L_{\lambda_2}=\H(\Z_2, G_n)$. Then we
can define a $G_n$-coloring $\lambda$ on $P^n$ as follows:
  for each facet $F_i$ of $P_i^{n_i}$,
\begin{equation}\label{def2}
\lambda(F)=
\begin{cases}
\lambda_1(F_1) & \text{if } F=F_1\times P_2^{n_2} \\
 \lambda_2(F_2) & \text{if } F=P_1^{n_1}\times F_2.
\end{cases}\end{equation}
  Furthermore, it is easy to see
that
 the product formula in
Proposition~\ref{formula} still holds in this case. Namely,
$g_{(P^n, \lambda)}=g_{(P_1, \lambda_1)}g_{(P_2, \lambda_2)}$.

\begin{rem}\label{pr}
By the definitions of $\lambda_i$ and $\lambda$ in (~\ref{def1}) and
(\ref{def2}), we see that  $P^n=P_1^{n_1}\times P_2^{n_2}$ with a
$G_n$-coloring $\lambda$ uniquely defines the restrictions
$\lambda_i$ of $\lambda$ on $P_i^{n_i}$. Conversely, given two nice
$G_n$-colorings $\lambda_i$ on $P_i^{n_i}$ ($i=1,2$) with
$n_1+n_2=n$ such that there are two corresponding subgroups
$L_{\lambda_1}$ and $L_{\lambda_2}$ with $L_{\lambda_1}\oplus
L_{\lambda_2}=\H(\Z_2, G_n)$, we can uniquely define a
$G_n$-coloring $\lambda$ on $P^n=P_1^{n_1}\times P_2^{n_2}$ such
that $\lambda_i$ is the restriction to $P^{n_i}_i$ of $\lambda$.
With this understood, we shall write
$$(P^n, \lambda)=(P^{n_1}, \lambda_1)\times (P^{n_2}, \lambda_2)$$
if $\lambda_i$ is the restriction to $P^{n_i}_i$ of $\lambda$.
\end{rem}

Throughout the following we use the convention that all simple convex $n$-polytopes are embedded in ${\Bbb R}^n$, and if  two simple convex polytopes $P^n_1$ and $P^n_2$ are combinatorially equivalent, then $P^n_1$ is identified with $P^n_2$.

\vskip .1cm

Now suppose $(P_1^n, \lambda_1)$ and $(P_2^n, \lambda_2)$ are two
$G_n$-colored simple convex $n$-polytopes such that there are two
vertices $v_1\in P_1^n$ and $v_2\in P_2^n$ with
$\lambda_{1v_1}=\lambda_{2v_2}$. Then  we can always perform a
connected sum $P_1^n\sharp_{v_1, v_2} P_2^n$ of $(P_1^n, \lambda_1)$
and $(P_2^n, \lambda_2)$ at $v_1$ and $v_2$, so that
$P_1^n\sharp_{v_1, v_2} P_2^n$ is still a simple convex polytope. In
fact,  because $(P^n_i, \lambda_i)$ is identified with its mirror
reflection $(\overline{P}^n_i, \overline{\lambda}_i)$ along a
hyperplane in ${\Bbb R}^n$ but the $G_n$-coloring order of $n$
facets at $v_i$ in $(P^n_i, \lambda_i)$ is exactly the reversion of
the $G_n$-coloring order of $n$ facets at $\overline{v}_i$ in
$(\overline{P}^n_i, \overline{\lambda}_i)$ where $\overline{v}_i$ is
the reflection point of $v_i$, this means that we can choose
$(P_1^n, \lambda_1)$ and $(P_2^n, \lambda_2)$  up to combinatorial
equivalence (if necessary) such that the $G_n$-coloring order of $n$
facets at $v_1$ in $(P^n_1, \lambda_1)$ is the reversion of the
$G_n$-coloring order of $n$ facets at $v_2$ in $(P^n_2, \lambda_2)$.
Note that the coloring monomial at a vertex $v$ determines  the
$G_n$-coloring order of the facets adjacent to $v$ up to a
reflection since the polytope is simple. Thus, we can perform the
required connected sum between $(P^n_1, \lambda_1)$ and $(P^n_2,
\lambda_2)$. Next it is not difficult to see that $\lambda_1$ and
$\lambda_2$ determine a $G_n$-coloring $\lambda$ on
$P_1^n\sharp_{v_1, v_2} P_2^n$. Moreover, we have that

\begin{prop}[Connected sum formula]\label{sum formula}
$$g_{(P_1^n\sharp_{v_1, v_2} P_2^n, \lambda)}=g_{(P_1^n, \lambda_1)}+g_{(P_2^n,
\lambda_2)}.$$
\end{prop}

\begin{proof}
This is because $g_{(P_1^n\sharp_{v_1, v_2} P_2^n, \lambda)}=
(g_{(P_1^n, \lambda_1)}-\lambda_{1v_1})+(g_{(P_2^n,
\lambda_2)}-\lambda_{2v_2}) =g_{(P_1^n, \lambda_1)}+g_{(P_2^n,
\lambda_2)}$ in $\Z_2[\widehat{\Hom(\Z_2, G_n)}]$.
\end{proof}

\section{Proof of Theorem~\ref{main result}} \label{proof of main}

First let us state a useful lemma.

\begin{lem}\label{local}
If $t\eta_2\cdots\eta_n$ and $t\theta_2\cdots\theta_n$ are two
different faithful $G_n$-monomials of degree $n$ in
$\Z_2[\widehat{\Hom(G_n,\Z_2)}]$ with $\{\eta_2, ..., \eta_n\}\equiv
\{\theta_2, ..., \theta_n\}\mod t$ in $\Hom(G_n,\Z_2)$, then their
duals are different and contain  the same monomial  of degree $n-1$
as a factor. The converse is also true.
\end{lem}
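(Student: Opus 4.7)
The plan is an explicit dual-basis computation, followed by a symmetric reprise for the converse.

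First I would reduce to canonical form. By reindexing the factors using the bijection on $\{2,\ldots,n\}$ implicit in the hypothesis $\{\eta_2,\ldots,\eta_n\}\equiv\{\theta_2,\ldots,\theta_n\}\mod t$, I may assume $\theta_i=\eta_i+\epsilon_i t$ for $i=2,\ldots,n$, with $\epsilon_i\in\Z_2$ and not all $\epsilon_i$ zero (since the monomials are assumed distinct). Let $\{s_1,s_2,\ldots,s_n\}\subset \H(\Z_2,G)$ be the basis dual to $\{t,\eta_2,\ldots,\eta_n\}$ under the pairing (\ref{pairing}), and let $\{s_1',s_2',\ldots,s_n'\}$ be the basis dual to $\{t,\theta_2,\ldots,\theta_n\}$.

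Next I would identify the $s_j'$ by direct pairing. From $\langle s_j,\theta_i\rangle=\langle s_j,\eta_i\rangle+\epsilon_i\langle s_j,t\rangle=\delta_{ji}+\epsilon_i\delta_{j1}$ for $j\ge 1$ and $i\ge 2$, one sees that for $j\ge 2$ the element $s_j$ already satisfies the defining relations of $s_j'$, so $s_j'=s_j$. A short adjustment then gives $s_1'=s_1+\sum_{i\ge 2}\epsilon_i s_i$. The two dual monomials are therefore
$$s_1 s_2\cdots s_n \qquad\text{and}\qquad \Bigl(s_1+\sum_{i\ge 2}\epsilon_i s_i\Bigr)s_2\cdots s_n,$$
which are different (since some $\epsilon_i\neq 0$) and share the degree $n-1$ factor $s_2\cdots s_n$, settling the forward direction.

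For the converse, I would run the same computation in reverse, exploiting the involutive symmetry of the duality between $\H(G,\Z_2)$ and $\H(\Z_2,G)$. Specifically, two distinct faithful dual monomials sharing a degree $n-1$ factor can, after relabeling, be written $s_1 s_2\cdots s_n$ and $(s_1+\sum c_i s_i) s_2\cdots s_n$ with some $c_i\neq 0$. Applying the same dual-basis pairing argument to these bases in $\H(\Z_2,G)$ produces their duals in $\H(G,\Z_2)$ as $t_1 t_2\cdots t_n$ and $t_1(t_2+c_2 t_1)\cdots(t_n+c_n t_1)$, so that setting $t=t_1$, $\eta_i=t_i$, $\theta_i=t_i+c_i t$ recovers the required congruence $\{\eta_2,\ldots,\eta_n\}\equiv\{\theta_2,\ldots,\theta_n\}\mod t$. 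The only substantial step is the change-of-basis formula for the duals, but over $\Z_2$ with a triangular change matrix it is a direct pairing computation and should present no serious obstacle.
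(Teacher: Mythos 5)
Your proof is correct, and it is the same argument the paper has in mind: the paper's proof of this lemma is the single line ``This follows immediately from the pairing (\ref{pairing})'', and your explicit dual-basis computation (showing $s_j'=s_j$ for $j\geq 2$ and $s_1'=s_1+\sum_{i\geq 2}\epsilon_i s_i$, with the symmetric computation for the converse) is exactly the verification being left to the reader. No gaps.
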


\begin{proof}
This  follows immediately from the pairing (\ref{pairing}).
\end{proof}

\begin{prop}\label{p1}
Let $g$ be a faithful $G_n$-polynomial in $\Z_2[\widehat{\Hom(G_n,
\Z_2)}]$. If $g\in \IM\phi_n$ then $d(g^*)=0$.
\end{prop}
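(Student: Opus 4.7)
The plan is to invoke Theorem~\ref{color poly} to realize $g$ as the $G$-coloring polynomial of some $G$-colored graph $(\Gamma,\alpha)$, so that $g = \sum_{p\in V_\Gamma}\prod_{x\in E_p}\alpha(x)$, and then to establish $d(g^*)=0$ by a flag-cancellation argument pairing contributions across each edge of $\Gamma$.

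First I would dualize termwise: by axiom~(1) each factor $\prod_{x\in E_p}\alpha(x)$ is a faithful monomial, so it has a well-defined squarefree dual monomial $\lambda_p\in \Z_2[\widehat{\Hom(\Z_2,G)}]$ of degree $n$, and $g^*=\sum_{p\in V_\Gamma}\lambda_p$. Applying $d$ gives
$$d(g^*)=\sum_{p\in V_\Gamma}\sum_{x\in E_p}\mu_{p,x},$$
where $\mu_{p,x}$ denotes the degree $n-1$ monomial obtained from $\lambda_p$ by deleting the dual-basis factor corresponding to $\alpha(x)\in\alpha(E_p)$. The next step is to reorganize this double sum: it ranges over flags $(p,x)$ with $x\in E_p$, which I would group by edges of $\Gamma$ since each edge has exactly two endpoints.

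The heart of the argument would then be: for each edge $e$ of $\Gamma$ with endpoints $p,q$ and color $t=\alpha(e)$, the two contributions $\mu_{p,e}$ and $\mu_{q,e}$ are equal as monomials, so they cancel modulo $2$. To see this, axiom~(2) lets me write $\alpha(E_p)=\{t,\eta_2,\dots,\eta_n\}$ and $\alpha(E_q)=\{t,\theta_2,\dots,\theta_n\}$ with $\eta_i\equiv\theta_i\pmod{t}$ after a suitable reindexing, and a short check with the pairing~(\ref{pairing}) shows that the dual bases agree on the $\eta_i$, i.e., $\eta_i^*=\theta_i^*$ in $\Hom(\Z_2,G)$ for $i\geq 2$. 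Consequently both $\mu_{p,e}$ and $\mu_{q,e}$ coincide with the common degree $n-1$ factor $\eta_2^*\cdots\eta_n^*=\theta_2^*\cdots\theta_n^*$ already guaranteed by Lemma~\ref{local}. Summing the paired cancellations over all edges yields $d(g^*)=0$.

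The main obstacle is not technical but conceptual: one must see that axiom~(2) is precisely what is needed to make the dual bases at the two endpoints of an edge agree on the $n-1$ factors transverse to $\alpha(e)$, so that the two half-edge contributions in $d(g^*)$ are actually the same monomial. Beyond this identification, the argument is pure bookkeeping in the flag set of $\Gamma$.
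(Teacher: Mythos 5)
Your proposal is correct and follows essentially the same route as the paper: realize $g$ as the coloring polynomial of a $G$-colored graph via Theorem~\ref{color poly}, then pair the degree $n-1$ terms of $d(g^*)$ across the two endpoints of each edge, using axiom~(2) together with the duality computation of Lemma~\ref{local} to see that the two half-edge contributions coincide. The only cosmetic difference is that the paper first passes to a prime representative (Lemma~\ref{prime}) so that Lemma~\ref{local}'s hypothesis of distinct monomials applies verbatim, whereas your flag-by-edge bookkeeping handles the case $\alpha(E_p)=\alpha(E_q)$ automatically; both are fine.
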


\begin{proof}
Let $g\in \IM\phi_n$. Then by Theorem~\ref{color poly},
Corollary~\ref{graph} and Lemma~\ref{prime}, there is a prime
abstract $G_n$-colored graph $(\Gamma, \alpha)$ with $g_{(\Gamma,
\alpha)}=g$. Take an edge $e=pq$ in $\Gamma$, we have that
$\alpha(E_p)\setminus\{\alpha(e)\}\equiv
\alpha(E_q)\setminus\{\alpha(e)\}\mod \alpha(e)$ in $\Hom(G_n,
\Z_2)$. Moreover,  by Lemma~\ref{local}, it follows that all
monomials of degree $n-1$ in $d(g^*)$ appear in pairs, so
$d(g^*)=0$.
\end{proof}

\begin{prop}\label{p2}
Suppose that $g=\sum_{i=1}^\ell t_{i,1}\cdots t_{i, n}$ is a
faithful $G_n$-polynomial in $\Z_2[\widehat{\Hom(G_n, \Z_2)}]$. If
$d(g^*)=0$, then $g$ is the $G_n$-coloring polynomial of an abstract
$G_n$-colored graph.
\end{prop}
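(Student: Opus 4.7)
The plan is to construct a $G$-colored graph $(\Gamma,\alpha)$ with $g_{(\Gamma,\alpha)}=g$ directly from the cancellation pattern of $d(g^*)=0$. Since each $t_{i,1}\cdots t_{i,n}$ is faithful, its dual monomial $s_{i,1}\cdots s_{i,n}$ is squarefree (the $s_{i,k}$ form a basis of $\Hom(\Z_2,G)$), and by duality the $\ell$ distinct monomials of $g$ correspond to $\ell$ distinct monomials of $g^*$. Expanding
$$d(g^*)=\sum_{i=1}^\ell\sum_{j=1}^n s_{i,1}\cdots\widehat{s}_{i,j}\cdots s_{i,n},$$
the hypothesis $d(g^*)=0$ says that each degree-$(n-1)$ monomial occurring in this sum does so an even number of times. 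I would fix a fixed-point-free involution $\iota$ on the set of ``corners'' $\mathcal{C}=\{(i,j):1\le i\le\ell,\,1\le j\le n\}$ that pairs $(i,j)$ with $(i',j')$ only when $g_i^*/s_{i,j}=g_{i'}^*/s_{i',j'}$.

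Next I would define $\Gamma$ to have vertices $v_1,\dots,v_\ell$ in bijection with the monomials of $g$, with one edge $e=v_iv_{i'}$ per $\iota$-orbit $\{(i,j),(i',j')\}$, colored by $\alpha(e):=t_{i,j}$. The converse direction of Lemma~\ref{local} applied to $g_i^*/s_{i,j}=g_{i'}^*/s_{i',j'}$ gives $g_i=t\eta_2\cdots\eta_n$ and $g_{i'}=t\theta_2\cdots\theta_n$ with $\{\eta_k\}\equiv\{\theta_k\}\bmod t$; concretely, $t$ is characterized intrinsically as the unique nonzero element of $\Hom(G,\Z_2)$ annihilating the $(n-1)$-dimensional subspace of $\Hom(\Z_2,G)$ spanned by the factors of $g_i^*/s_{i,j}$, via the pairing (\ref{pairing}). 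Hence $t_{i,j}=t_{i',j'}=t$ and the color of $e$ is unambiguous. The two defining conditions of a $G$-colored graph now follow at once: at $v_i$ the $n$ adjacent edges, one per corner $(i,j)$, carry colors $t_{i,1},\dots,t_{i,n}$ with faithful product $g_i$; and the congruence $\alpha(E_{v_i})\equiv\alpha(E_{v_{i'}})\bmod\alpha(e)$ is precisely the congruence above. Summing over vertices yields $g_{(\Gamma,\alpha)}=\sum_i g_i=g$.

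The main obstacle is the well-definedness of this construction. Loops are excluded because pairing $(i,j)$ with $(i,j')$ for $j\ne j'$ would force $g_i^*/s_{i,j}=g_i^*/s_{i,j'}$ and hence $s_{i,j}=s_{i,j'}$, contradicting the squarefreeness of $g_i^*$. When a degree-$(n-1)$ monomial in $d(g^*)$ appears with multiplicity greater than two, the involution $\iota$ is not canonical, but any valid choice produces a $G$-colored graph with the same coloring polynomial $\sum_i g_i=g$, so the conclusion is independent of that choice. The one substantive ingredient is the intrinsic characterization of the shared color $t$ noted above, which is a direct consequence of the annihilator relation between dual bases given by (\ref{pairing}).
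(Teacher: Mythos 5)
Your construction is essentially the paper's own proof: vertices indexed by the monomials of $g^*$, edges obtained by pairing the cancelling degree-$(n-1)$ terms of $d(g^*)$, colors assigned via the dual correspondence $t_{i,j}\leftrightarrow s_{i,1}\cdots\widehat{s}_{i,j}\cdots s_{i,n}$, and the two axioms of a $G$-colored graph verified by Lemma~\ref{local}. Your version is slightly more explicit about the absence of loops and the non-canonicity of the pairing when a monomial of $d(g^*)$ cancels with multiplicity greater than two, but the argument is the same.
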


\begin{proof} Assume that $d(g^*)=0$. We directly construct the required abstract
$G_n$-colored graph as follows:

\vskip .2cm
First, write $g^*=\sum_{i=1}^\ell s_{i,1}\cdots s_{i, n}$ such that each $s_{i,1}\cdots s_{i, n}$
is the dual monomial of $t_{i,1}\cdots t_{i, n}$, and then take $\ell$ points $v_1, ..., v_\ell$ as
vertices,  which are labeled by monomials $s_{1,1}\cdots s_{1, n}, ...$, $s_{\ell,1}\cdots s_{\ell, n}$,
respectively.

\vskip .2cm
 Next, for each vertex $v_i$, the monomial $s_{i,1}\cdots s_{i, n}$ corresponding to $v_i$
 contains $n$ monomials
$s_{i,1}\cdots \widehat{s}_{i,j}\cdots s_{i, n}$ of degree $n-1$, $j=1, ..., n$.
These $n$ monomials of degree $n-1$ are distinct since $g$ is faithful.
Then we can use $n$ segments to make a bouquet with $v_i$ as a common endpoint by gluing only an
endpoint of each segment to $v_i$, and further we use the
$n$ monomials $s_{i,1}\cdots \widehat{s}_{i,j}\cdots s_{i, n}$ of degree
$n-1$ $(j=1, ..., n)$ to label these $n$ segments. So we can exactly get $\ell$ such bouquets
with $v_1, ..., v_\ell$ as their common endpoints respectively.
Since $d(g^*)=\sum_{i=1}^\ell\sum_{j=1}^n s_{i,1}\cdots \widehat{s}_{i,j}\cdots s_{i, n}=0$,
this means that all $s_{i,1}\cdots \widehat{s}_{i,j}\cdots s_{i, n}\ (i=1, ..., \ell, j=1, ..., n)$
exactly appear in pairs. Moreover, we may use those $\ell$ labeled bouquets to produce an $n$-valent
regular graph $\Gamma$ with $\{v_1, ..., v_\ell\}$ as its vertex set by pairwise gluing segments with
same labels
 together along non-common endpoints of bouquets.

\vskip .2cm Our next procedure is to do a change of labels on all
edges of $\Gamma$. We see easily from the pairing (\ref{pairing})
that each factor $t_{i, j}$ in $t_{i,1}\cdots t_{i, n}$ uniquely
corresponds to a monomial of degree $n-1$ in $s_{i,1}\cdots s_{i,
n}$. Without loss of generality, we may assume that $t_{i, j}$
exactly corresponds to $s_{i,1}\cdots \widehat{s}_{i,j}\cdots s_{i,
n}$. Then we can  give new labels on all edges of  $\Gamma$ by using
$t_{i,j}$ to replace $s_{i,1}\cdots \widehat{s}_{i,j}\cdots s_{i,
n}$ $(i=1, ..., \ell, j=1,..., n)$. Moreover, we conclude by
Lemma~\ref{local} that the graph $\Gamma$ with new labels on edges
is exactly the required abstract $G_n$-colored graph.
\end{proof}

Together with Theorem~\ref{color poly} and Propositions~\ref{p1}--\ref{p2}, we complete the proof of Theorem~\ref{main result}.

\section{The conjecture $(\star)$ and the basic structure of $\mathfrak{M}_*$} \label{app-1}

Now let $\mathcal{V}_n$ be the linear space over $\Z_2$ produced by
all faithful $G_n$-polynomials $g$ in $\Z_2[\widehat{\Hom(G_n,
\Z_2)}]$ with $d(g^*)=0$. By $\mathcal{V}_n^*$ we denote the linear
space over $\Z_2$ formed by the dual polynomials of those
polynomials in $\mathcal{V}_n$. Then $\mathcal{V}_n^*$ is clearly
isomorphic to $\mathcal{V}_n$. Furthermore£¬ we have from
Theorems~\ref{main result} and~\ref{color poly} that
\begin{prop}\label{iso}
$\mathcal{Z}_n(G_n)$ is isomorphic to $\mathcal{V}_n^*$.
\end{prop}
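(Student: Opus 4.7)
The plan is to deduce Proposition~\ref{iso} as a direct consequence of Proposition~\ref{graph}, Theorem~\ref{color poly}, and Theorem~\ref{main result}, all of which are already in hand. Concretely, I would exhibit a $\Z_2$-linear map
$$\Psi:\Lambda(G)/\!\sim\;\longrightarrow\;\mathcal{R}_n(G),\qquad \{(\Gamma,\alpha)\}\longmapsto g_{(\Gamma,\alpha)},$$
show that its image coincides with $\mathcal{V}_n$, and then precompose with the isomorphism $\mathfrak{M}_n\cong\Lambda(G)/\!\sim$ furnished by Proposition~\ref{graph}.

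First I would check that $\Psi$ is well defined and injective. Both properties follow at once from the definition of the equivalence relation $\sim$, which declares $(\Gamma_1,\alpha_1)\sim(\Gamma_2,\alpha_2)$ exactly when $g_{(\Gamma_1,\alpha_1)}=g_{(\Gamma_2,\alpha_2)}$. Linearity is immediate because the addition on $\Lambda(G)/\!\sim$ is induced by disjoint union of colored graphs, which at the level of coloring polynomials is the sum of the corresponding $g_{(\Gamma,\alpha)}$.

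Next I would compute $\IM\Psi$. By condition (1) in the definition of a $G$-colored graph every $g_{(\Gamma,\alpha)}$ is a faithful $G$-polynomial, and conversely Theorem~\ref{color poly} tells us that a faithful $G$-polynomial lies in $\IM\phi_n$ if and only if it is the coloring polynomial of some $(\Gamma,\alpha)\in\Lambda(G)$. Hence $\IM\Psi$ equals the set of faithful $G$-polynomials inside $\IM\phi_n$. Invoking Theorem~\ref{main result}, which characterizes this set as precisely those faithful $g$ with $d(g^*)=0$, identifies $\IM\Psi$ with $\mathcal{V}_n$. Thus $\Psi$ descends to a $\Z_2$-linear isomorphism $\Lambda(G)/\!\sim\;\xrightarrow{\cong}\mathcal{V}_n$, and combining with Proposition~\ref{graph} gives $\mathfrak{M}_n\cong\mathcal{V}_n$.

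There is no substantive obstacle to surmount here: the statement is essentially bookkeeping, since the real content sits inside Theorem~\ref{main result}. The only point that deserves a line of explicit verification is the compatibility of the three $\Z_2$-vector space structures in play, namely disjoint union of $G$-manifolds representing classes in $\mathfrak{M}_n$, disjoint union of $G$-colored graphs on the $\Lambda(G)/\!\sim$ side, and ordinary polynomial addition in $\mathcal{V}_n\subset\mathcal{R}_n(G)$; each step of the chain $\mathfrak{M}_n\to\Lambda(G)/\!\sim\to\mathcal{V}_n$ sends one structure to the next by construction, so the composite is indeed a $\Z_2$-linear isomorphism.
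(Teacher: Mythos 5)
Your proposal is correct and follows exactly the route the paper takes: Proposition~\ref{iso} is stated there as an immediate consequence of Theorem~\ref{main result}, Theorem~\ref{color poly} and Proposition~\ref{graph}, and your argument simply spells out that deduction. The only cosmetic remark is that your map $\Psi$ clashes notationally with the $\Psi:\mathcal{L}_n\to\mathcal{V}_n^*$ of Lemma~\ref{epi}, so a different letter would be preferable.
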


 Since we know from \cite{l1} that
$\mathcal{Z}_n(G_n)=(0)$ if $n=1$, throughout the following assume
that $n>1$.

\subsection{$G_n$-colorings on the products of simplices}
We first do an analysis for the product of simplices with a
$G_n$-coloring, which can provide us much insight into the  study of
$\mathcal{V}_n^*$.

\vskip .2cm
 Let $P^n$ be the product $\Delta^{n_1}\times\cdots\times
\Delta^{n_r}$ of some simplices $\Delta^{n_i}$, $i=1, ..., r$, such
that $P^n$ admits a $G_n$-coloring $\lambda$, where
$n_1+\cdots+n_r=n$. Then $P^n$ has $n+r$ facets,  which consist of
$\Delta^{n_1}\times\cdots\times\Delta^{n_{i-1}}\times
F_j^{(i)}\times\Delta^{n_{i+1}}\times\cdots\times\Delta^{n_r}$
(denoted by $F_{i, j}$), $j=1, ..., n_i+1$, $i=1, ..., r$, where
$F_1^{(i)}, ..., F_{n_i}^{(i)},
F_{n_i+1}^{(i)}$ denote the $n_i+1$ facets of $\Delta^{n_i}$. 
Let $\lambda_i$ be the restriction to $\Delta^{n_i}$ of $\lambda$,
so $\lambda_i(F_j^{(i)})=\lambda(F_{i,j})$ (see
Subsection~\ref{product-coloring}). Then by Lemma~\ref{pro}, each
$\lambda_i$ is a nice $G_n$-coloring, and by
Proposition~\ref{formula} we have that
$$g_{(P^n, \lambda)}=\prod_{i=1}^rg_{(\Delta^{n_i}, \lambda_i)}.$$
Let $\{\rho_1^*, ..., \rho_n^*\}$ denote the standard basis of
$\Hom(\Z_2, G_n)$, where each $\rho_i^*$ is defined by $a\longmapsto
(\underbrace{0,...,0}_{i-1}, a, 0,...,0)$ (see Subsection~\ref{dual
poly}). Consider the following $n$ facets
\begin{equation}\label{facet}
\underbrace{F_{1,1}, ..., F_{1, n_1}}_{n_1}, ..., \underbrace{F_{r, 1}, ..., F_{r, n_r}}_{n_r}
\end{equation}
 which meet at a vertex $v_0$ of $P^n$.
Without loss of generality, assume that  these $n$ facets above
meeting at $v_0$ are colored by $\rho_1^*, ..., \rho_n^*$
respectively (if not, we can always use an automorphism of
$\Hom(\Z_2, G_n)$ to obtain the desired coloring at $v_0$), so
$$\underbrace{\lambda_1(F_1^{(1)})=\rho_1^*, ...,
\lambda_1(F_{n_1}^{(1)})=\rho^*_{n_1}}_{n_1}, ...,
\underbrace{\lambda_r(F_1^{(r)})=\rho^*_{n_1+\cdots+n_{r-1}+1}, ...,
\lambda_r(F_{n_r}^{(r)})=\rho^*_n}_{n_r}.$$
    We note that if a facet $F_{i, j}$ ($1\leq j\leq n_i$) in the $n$ facets of (\ref{facet}) is
    replaced by $F_{i, n_i+1}$, then the resulting $n$ facets  still meet at a vertex of $P^n$.
Since the $G_n$-coloring of $n$ facets meeting at each vertex of
$P^n$ gives a basis of $\Hom(\Z_2, G_n)$,   we see easily that
$$
\begin{cases}\lambda_1(F_{n_1+1}^{(1)})&=\rho_1^*+\cdots+\rho^*_{n_1}+\beta_1\\
& \cdots\\
\lambda_i(F_{n_i+1}^{(i)})&=\rho^*_{\sum_{j=1}^{i-1}n_j+1}+\cdots+
\rho^*_{\sum_{j=1}^{i}n_j}+\beta_i\\
&\cdots\\
\lambda_r(F_{n_r+1}^{(r)})&=\rho_{n_1+\cdots+n_{r-1}+1}^*+\cdots+\rho^*_{n}+\beta_r\end{cases}$$
such that each $\beta_i\in \Hom(\Z_2, G_n)$ satisfies the property
$(*)$ that $\beta_i$ is a linear combination of $\{\rho_1^*, ...,
\rho^*_n\}\setminus\{\rho^*_{n_1+\cdots+n_{i-1}+1}, ...,
\rho^*_{n_1+\cdots+ n_i}\}.$  Thus, for each $i$, we have that
$$g_{(\Delta^{n_i}, \lambda_i)}=d\Big(\rho^*_{\sum_{j=1}^{i-1}n_j+1}\cdots
\rho^*_{\sum_{j=1}^{i}n_j}
\big(\rho^*_{\sum_{j=1}^{i-1}n_j+1}+\cdots+
\rho^*_{\sum_{j=1}^{i}n_j}+\beta_i\big)\Big).$$ This gives
\begin{lem}\label{simplex}
With the above assumption, the $G_n$-coloring polynomial of $(P^n,
\lambda)$ is $$g_{(P^n,\lambda)}
=\prod_{i=1}^rd\Big(\rho^*_{\sum_{j=1}^{i-1}n_j+1}\cdots
\rho^*_{\sum_{j=1}^{i}n_j}
\big(\rho^*_{\sum_{j=1}^{i-1}n_j+1}+\cdots+
\rho^*_{\sum_{j=1}^{i}n_j}+\beta_i\big)\Big).$$
\end{lem}
Note that if some $n_i=1$ and $g_{(P^n, \lambda)}\not=0$, then
$g_{(\Delta^{n_i}, \lambda_i)}\not=0$, so $\beta_i$ must be nonzero.
This means that two facets of $\Delta^{n_i}=\Delta^1$ are colored by
two different elements.

\begin{lem}
With the above notion, there is at least one $\beta_i$ in the
expression of $g_{(P^n, \lambda)}$ such that $\beta_i=0$.
\end{lem}
\begin{proof}
When $r=1$ (i.e., $P^n=\Delta^n$), clearly we must have that
$$g_{(P^n,
\lambda)}=d\big(\rho_1^*\cdots\rho_n^*(\rho_1^*+\cdots+\rho_n^*)\big),$$
so $\beta_1=0$ as desired. When $r>1$, without loss of generality,
suppose that there is some positive integer $k$ with $k\leq r$ such that
for all $1\leq i\leq k$, $\beta_i\not=0$, so $\beta_1$ is nonzero. Furthermore, assume that
the expression of $\beta_1$ contains the term $\rho^*_{n_1+1}$.
\vskip .2cm

\noindent {\bf Claim A.} {\em For each $i\leq k$, $\beta_i$ is a
linear combination of  $\rho^*_{\sum_{j=1}^in_j+1}, ..., \rho^*_n$.}

\vskip.1cm
 If $2\leq k$, then $\beta_2\not=0$.
We have known that $$\beta_2\in\H(\Z_2, G_n)\setminus
\text{Span}\{\rho^*_{n_1+1}, ..., \rho^*_{n_1+ n_2}\}.$$ If the
expression of $\beta_2$ contains at least one of $\rho_1^*, ...,
\rho_{n_1}^*$, without loss of generality, we  assume that $\beta_2$
contains the term $\rho^*_1$ in its expression. Consider the
following $n$ facets
$$\underbrace{F_{1, 2}, ..., F_{1, n_1+1}}_{n_1}, \underbrace{F_{2,2}, ..., F_{2, n_2+1}}_{n_2}, \underbrace{F_{3,1},
..., F_{3, n_3}}_{n_3}, ..., \underbrace{F_{r,1}, ..., F_{r,
n_r}}_{n_r}$$ which can meet at a vertex of $P^n$. These $n$ facets
determine  the following $n$ elements
\begin{align*}
&\lambda_1(F_2^{(1)})=\rho_2^*, ...,
\lambda_1(F^{(1)}_{n_1})=\rho^*_{n_1}, \lambda_1(F^{(1)}_{n_1+1})
=\rho_1^*+\cdots+\rho_{n_1}^*+\beta_1\\
& \lambda_2(F_2^{(2)}) =\rho^*_{n_1+2}, ...,
\lambda_2(F_{n_2}^{(2)})=\rho^*_{n_1+n_2},
\lambda_2(F^{(2)}_{n_2+1})=\rho^*_{n_1+1}+\cdots+\rho^*_{n_1+n_2}+\beta_2\\
&\lambda_3(F_1^{(3)})=\rho^*_{n_1+n_2+1}, ...,
\lambda_3(F^{(3)}_{n_3})=\rho^*_{n_1+n_2+n_3}\\
& ...\\
& \lambda_r(F_1^{(r)})=\rho^*_{n_1+\cdots+n_{r-1}+1}, ...,
\lambda_3(F^{(r)}_{n_r})=\rho^*_n
\end{align*}
which should form a basis of $\H(\Z_2, G_n)$. However, these $n$
elements above are actually linearly dependent. This is because
cosets
 $$\lambda_1(F^{(1)}_{n_1+1})+A=\lambda_2(F^{(2)}_{n_2+1})+A=\rho_1^*+\rho^*_{n_1+1}+A$$
in the quotient group $\H(\Z_2, G_n)/A$ where $A$ is the subgroup
generated by other $n-2$
  elements above except for $\lambda_1(F^{(1)}_{n_1+1})$ and $\lambda_2(F^{(2)}_{n_2+1})$. Thus, this case is impossible, so
$\beta_2$ is a linear combination of $\rho^*_{n_1+n_2+1}, ...,
\rho^*_n$. This also implies that $r$ must be greater than 2.
Without loss of generality, we may further assume that the
expression of $\beta_2$ contains the term $\rho_{n_1+n_2+1}^*$.

\vskip .2cm Now  suppose inductively that for each $i< k$, $\beta_i$
is a linear combination of
 $\rho^*_{\sum_{j=1}^in_j+1}, ..., \rho^*_n$, and the expression of $\beta_i$
 contains the term $\rho^*_{\sum_{j=1}^in_j+1}$. Consider the case in which $i=k$.
 First, we note that up to combinatorial equivalence, $P^n$ is independent of the choice
 of product orderings of $\Delta^{n_1}, ..., \Delta^{n_r}$. In other words,
 for any permutation $\{i_1, ..., i_r\}$ of $\{1, ..., r\}$, $\Delta^{n_{i_1}}\times\cdots\times\Delta^{n_{i_r}}$ is combinatorially equivalent to
 $\Delta^{n_1}\times\cdots\times\Delta^{n_r}$. Thus, by Remark~\ref{pr} we can identify
  $(\Delta^{n_1}, \lambda_1)\times\cdots\times (\Delta^{n_r}, \lambda_r)$ with $(\Delta^{n_{i_1}}, \lambda_{i_1})\times\cdots\times (\Delta^{n_{i_r}}, \lambda_{i_r})$. With this understood,  we  change the ordering of $\{1, ..., k, k+1, ...,  r\}$ into a new ordering
 $\{2, ..., k, 1, k+1, ..., r\}$, so that $\beta_2, ..., \beta_k$ in this new ordering can be regarded as
 $\beta_1, ..., \beta_{k-1}$ in the ordering of $\{1, .., r\}$. Then
 we can use the inductive hypothesis to obtain that $\beta_{k}$ does not contain
 anyone of $\rho^*_{n_1+1}, ..., \rho^*_{n_1+\cdots+n_{k}}$ in its
 expression. Similarly, consider the following permutations of $\{1,..., r\}$
 $$\{3, ..., k, 1, 2, k+1, ..., r\},  \{4, ..., k, 1, 2, 3, k+1, ..., r\}, ..., \{k, 1, ..., k-1, k+1, ..., r\}$$
 we conclude that for $i<k$, the expression of $\beta_i$ does not contain anyone of $$\{\rho_1^*, ..., \rho^*_{n_1+\cdots+n_k}\}\setminus
 \{\rho^*_{n_1+\cdots+n_{i}+1}, ..., \rho^*_{n_1+\cdots+n_{i+1}}\}.$$

 It remains to show that the expression of $\beta_{k}$ does not contain anyone of $\rho^*_1, ..., \rho^*_{n_1}$.
If not, assume that the expression of $\beta_{k}$ contains the term
$\rho^*_1$. For $i\leq k-1$, write
 \begin{align*}
\lambda_i(F_{n_i+1}^{(i)})=&\sum_{l=1}^{n_i}\rho_{n_1+\cdots+n_{i-1}+l}^*+\rho^*_{n_1+\cdots+n_i+1}
+\gamma_i+\gamma'_i
\end{align*}
and for $i=k$, write
\begin{align*}
\lambda_{k}(F_{n_{k}+1}^{(k)})=&\rho_{n_1+\cdots+n_{k-1}+1}^*+\cdots+\rho^*_{n_1+\cdots+n_{k}}+
\rho^*_1+\gamma_{k}+\gamma'_{k}
\end{align*}
where  for $i< k$, $\gamma_i$ is a linear combination of $\rho^*_{n_1+\cdots+n_{i}+2}, ...,
\rho^*_{n_1+\cdots+n_{i+1}}$ and $\gamma_{k}$ is a linear combination of $\rho^*_2, ...,
\rho^*_{n_1}$, and for $i\leq k$, $\gamma'_i $ is a
linear combination of $\rho^*_{\sum_{j=1}^kn_j+1}, ..., \rho^*_n$. Consider the following $n$ facets
$$\underbrace{F_{1, 2}, ..., F_{1, n_1+1}}_{n_1}, ..., \underbrace{F_{k,2}, ..., F_{k, n_k+1}}_{n_k}, \underbrace{F_{k+1,1},
..., F_{k+1, n_{k+1}}}_{n_{k+1}}, ..., \underbrace{F_{r,1}, ...,
F_{r, n_r}}_{n_r}$$ which can meet at a vertex of $P^n$. Then these
$n$ facets determine the following $n$ linearly independent elements
of $\H(\Z_2, G_n)$ via $\lambda$
\begin{align*}
&\underbrace{\rho_2^*, ..., \rho^*_{n_1}, \lambda_1(F^{(1)}_{n_1+1})}_{n_1}, ...,  
\underbrace{\rho^*_{\sum_{j=1}^{k-1}n_j+2}, ...,\rho^*_{\sum_{j=1}^kn_j},
\lambda_k(F^{(k)}_{n_k+1})}_{n_k},\\
&\underbrace{\rho^*_{\sum_{j=1}^kn_j+1}, ..., \rho^*_{\sum_{j=1}^{k+1}n_j}}_{n_{k+1}},
 ...,
\underbrace{\rho^*_{\sum_{j=1}^{r-1}n_j+1}, ..., \rho^*_n}_{n_r}.
\end{align*}
However, by direct calculations we have that the coset
$$\lambda_1(F^{(1)}_{n_1+1})+\cdots+\lambda_k(F^{(k)}_{n_k+1})+B=0 \text{ in }  \H(\Z_2, G_n)/B$$
which is impossible, where $B$ is the subgroup generated by
$\rho_2^*, ..., \rho^*_{n_1}, ...,$
$\rho^*_{\sum_{j=1}^{k-1}n_j+2}$, $..., \rho^*_{\sum_{j=1}^kn_j}$,
$\rho^*_{\sum_{j=1}^kn_j+1}, ..., \rho^*_n$. This completes the
induction and the proof of Claim A.

\vskip .2cm By Claim A, we see that it is impossible to have $\beta_i\not=0$ for all $i\leq r$
 since $r\leq n$.  Thus, there must be at least one
$\beta_i=0$.
\end{proof}

\begin{cor}
If $P^n$ is an $n$-cube  with a $G_n$-coloring $\lambda$, then
$g_{(P^n, \lambda)}=0$. Moreover, each real Bott manifold bounds
equivariantly.
\end{cor}

\subsection{Structure of $\mathcal{V}_n^*$}
Let $h$ be a polynomial in $\mathcal{V}_n^*$. Then $d(h)=0$,  and by
Lemma~\ref{commut}, we have that $d(\sigma(h))=\sigma(d(h))=0$ for
any automorphism $\sigma\in\text{\rm Aut}(\Hom(\Z_2, G_n))$, so
$\sigma(h)\in \mathcal{V}_n^*$. By Corollary~\ref{diff}, we have
that there is  a squarefree homogeneous polynomial $\varphi$ of
degree $n+1$ in $\Z_2[\widehat{\Hom(\Z_2, G_n)}]$ such that
$d(\varphi)=h$, and all $n+1$ degree-one factors in each monomial of
$\varphi$ contain a basis of $\Hom(\Z_2, G_n)$.

\vskip .2cm Now let $s_1\cdots s_{n+1}$ be a monomial of $\varphi$.
Since $s_1, ..., s_{n+1}$ contain a basis of $\Hom(\Z_2, G_n)$, we
can apply an automorphism of $\Hom(\Z_2, G_n)$ to change this basis
into the standard basis $\{\rho_1^*, ..., \rho_n^*\}$, so without
loss of generality we may assume that $s_1, ..., s_{n+1}$ exactly
contain the standard basis $\{\rho_1^*, ...,
\rho_n^*\}$ of $\Hom(\Z_2, G_n)$. 
Furthermore,  all possible choices of
$s_1\cdots s_{n+1}$ are the following
\begin{equation*}
\begin{cases}
\rho_1^*\cdots\rho_n^*(\rho_1^*+\cdots+\rho_n^*) \\
\rho_1^*\cdots\rho_n^*(\rho_1^*+\cdots+\rho_{n-1}^*)\\
\ \ \ \ \ \ \ \ \ \ \ \vdots\\
\rho_1^*\cdots\rho_n^*(\rho_1^*+\rho_2^*).
\end{cases}
\end{equation*}
Obviously,
$d\big(\rho_1^*\cdots\rho_n^*(\rho_1^*+\cdots+\rho_n^*)\big)$
belongs to $\mathcal{V}^*_n$. If $\varphi$ contains the monomial
$\rho_1^*\cdots\rho_n^*(\rho_1^*+\cdots+\rho_{n-1}^*)$, then it must
contain the monomial
 $\rho_1^*\cdots\rho_{n-1}^*(\rho_1^*+\cdots+\rho_{n-1}^*)(\rho_n^*+s)$
 where $s\not=0$ is a linear combination of
 $\rho_1^*, ..., \rho_{n-1}^*$. Then we see easily that
 $$d\big(\rho_1^*\cdots\rho_n^*(\rho_1^*+\cdots+\rho_{n-1}^*)+\rho_1^*\cdots\rho_{n-1}^*(\rho_1^*+\cdots+\rho_{n-1}^*)
 (\rho_n^*+s)\big)\in \mathcal{V}^*_n.$$
 Generally, if $\varphi$ contains the monomial
 $\rho_1^*\cdots\rho_n^*(\rho_1^*+\cdots+\rho_{i}^*)$ with $i>1$, then
 we can write
 $$\varphi=\rho_1^*\cdots\rho_i^*(\rho_1^*+\cdots+\rho_{i}^*)f+\varphi'$$
 where $f$ is a squarefree homogeneous polynomial of degree $n-i$ such
 that the cosets of
  all degree-one factors modulo $\text{\rm Span}\{\rho_1^*, ..., \rho_i^*\}$ of each monomial of $f$ are
  also linearly independent in
  $\Hom(\Z_2, G_n)/\text{\rm Span}\{\rho_1^*, ..., \rho_i^*\}$,   and each monomial of $\varphi'$ does not contain
 $\rho_1^*\cdots\rho_i^*(\rho_1^*+\cdots+\rho_{i}^*)$. Since
 $$h=d(\varphi)=\rho_1^*\cdots\rho_i^*(\rho_1^*+\cdots+\rho_{i}^*)d(f)+
 fd\big(\rho_1^*\cdots\rho_i^*(\rho_1^*+\cdots+\rho_{i}^*)\big)+d(\varphi')$$
 belongs to $\mathcal{V}_n^*$, this forces $d(f)$ to be zero. Thus, we conclude that
 $$d\big(\rho_1^*\cdots\rho_i^*(\rho_1^*+\cdots+\rho_{i}^*)f\big)\in \mathcal{V}^*_n.$$

\vskip .1cm

Combining the above argument gives the following result.

\begin{lem}\label{basis}
  $\mathcal{V}^*_n$ is generated by the  polynomials of the following form
$$\sigma(d(\varphi_i)), \ \ 1<i\leq n$$
where $\sigma\in \text{\rm Aut}(\Hom(\Z_2, G_n))$, and
$\varphi_i=\rho_1^*\cdots\rho_i^*(\rho_1^*+\cdots+\rho_{i}^*)f_i$
with $f_i$ having the property that
        $f_i$ is a nonzero squarefree homogeneous polynomial of degree $n-i$ such that  $d(f_i)=0$,
         and if $i<n$, the cosets of $\text{\rm Span}\{\rho_1^*, ..., \rho_i^*\}$ in $\Hom(\Z_2, G_n)$
         determined by all degree-one factors of
         each monomial of $f_i$ are also linearly independent in
  $\Hom(\Z_2, G_n)/\text{\rm Span}\{\rho_1^*, ..., \rho_i^*\}$, and if $i=n$, $f_n=1$.
\end{lem}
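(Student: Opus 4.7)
The plan is to iterate the one-step analysis that immediately precedes the lemma, peeling off one ``block'' at a time. Given $H \in \mathcal{L}_n$, I would first pick any monomial $s_1 \cdots s_{n+1}$ of $H$ and, after applying a suitable automorphism $\sigma \in \text{Aut}(\Hom(\Z_2, G))$, normalize it so that $n$ of its factors are $\rho_1^*, \ldots, \rho_n^*$ and the remaining factor takes the shape $\rho_1^* + \cdots + \rho_i^*$ for some unique $2 \leq i \leq n$. The bound $i \leq n$ is clear; the squarefree condition forces $i > 1$, since otherwise $\rho_1^*$ would repeat in the monomial. Up to an additional permutation (also an automorphism), this is exactly the normal form in the statement.

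Next I would group all monomials of $H$ whose factor set contains the entire block $\{\rho_1^*, \ldots, \rho_i^*, \rho_1^* + \cdots + \rho_i^*\}$ and write $H = \rho_1^* \cdots \rho_i^* (\rho_1^* + \cdots + \rho_i^*)\,h + H'$, where no monomial of $h$ uses any of those $i+1$ vectors. Applying $d$ yields
\[
d(H) = \rho_1^* \cdots \rho_i^* (\rho_1^* + \cdots + \rho_i^*)\,d(h) + h\,d\bigl(\rho_1^* \cdots \rho_i^* (\rho_1^* + \cdots + \rho_i^*)\bigr) + d(H').
\]
The first summand carries the entire linearly dependent block, so its monomials are non-faithful, whereas the second and third summands use only $i$ of those $i+1$ vectors. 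Since $d(H) \in \mathcal{V}_n^*$ consists of faithful monomials and no term from the latter two summands can coincide with one from the first, the first summand must vanish, giving $d(h) = 0$. Consequently $\rho_1^* \cdots \rho_i^* (\rho_1^* + \cdots + \rho_i^*)\,h$ lies in $\mathcal{L}_n$, and so does $H' = H - \rho_1^* \cdots \rho_i^* (\rho_1^* + \cdots + \rho_i^*)\,h$. Since $H'$ has strictly fewer monomials than $H$, iterating this extraction expresses $H$ as a finite sum of pieces of the form $\sigma(H_i)$.

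The linear independence condition on $h_i$ for $i < n$ is extracted from the same decomposition: the mixed term $h\,d(\rho_1^* \cdots \rho_i^* (\rho_1^* + \cdots + \rho_i^*))$ must also lie in $\mathcal{V}_n^*$, and in the summand where $\rho_1^* + \cdots + \rho_i^*$ is the deleted factor one obtains $\rho_1^* \cdots \rho_i^*\,h$. Faithfulness of every monomial of this product is equivalent to saying that the $n-i$ degree-one factors of each monomial of $h$ are linearly independent modulo $\text{Span}\{\rho_1^*, \ldots, \rho_i^*\}$, which is exactly the stated requirement.

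The main obstacle will be making the separation argument rigorous: one has to verify that no monomial of $\rho_1^* \cdots \rho_i^* (\rho_1^* + \cdots + \rho_i^*)\,d(h)$ can be cancelled against any monomial coming from $h\,d(\rho_1^* \cdots \rho_i^* (\rho_1^* + \cdots + \rho_i^*))$ or from $d(H')$. This reduces to careful bookkeeping of which subset of $\{\rho_1^*, \ldots, \rho_i^*, \rho_1^* + \cdots + \rho_i^*\}$ appears in each term, together with the observation that only the first summand exhibits all $i+1$ of them simultaneously. Once this is established, both the vanishing $d(h) = 0$ and the inductive reduction fall out cleanly, and Lemma~\ref{basis} follows.
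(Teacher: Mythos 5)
Your proposal is correct and follows essentially the same route as the paper: normalize a monomial to the form $\rho_1^*\cdots\rho_n^*(\rho_1^*+\cdots+\rho_i^*)$ by an automorphism, split $H=\rho_1^*\cdots\rho_i^*(\rho_1^*+\cdots+\rho_i^*)h+H'$, deduce $d(h)=0$ from $d(H)\in\mathcal{V}_n^*$ via the observation that only the first summand of $d(H)$ carries the full linearly dependent block, and read off the independence condition on $h$ from the term $\rho_1^*\cdots\rho_i^*h$ in $h\,d\bigl(\rho_1^*\cdots\rho_i^*(\rho_1^*+\cdots+\rho_i^*)\bigr)$. Your explicit iteration on $H'$ and your spelled-out no-cancellation bookkeeping are just slightly more detailed versions of what the paper leaves implicit.
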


\begin{defn}\label{colorable}
Let $f$ be a squarefree homogeneous polynomial in
$\Z_2[\widehat{\Hom(\Z_2, G_n)}]$ with $0<\deg f\leq n$. We say that
$f$ is {\em $G_n$-colorable} if $f$ is  the sum of the coloring
polynomials of  some $\deg f$-dimensional simple convex  polytopes
$P_1,  ..., P_l$ with nice $G_n$-colorings $\lambda_1, ...,
\lambda_l$ respectively, where each $P_i$ is a product of simplices.
\end{defn}

It is easy to see the following property:
\begin{enumerate}
 \item[$(**)$] If $f$ is $G_n$-colorable,  then for any automorphism
 $\sigma\in \text{Aut}(\Hom(\Z_2,G_n))$, $\sigma(f)$ is $G_n$-colorable .
     \end{enumerate}

Let $\mathcal{S}$ denote the set formed by those nonzero squarefree
homogeneous polynomials $f\in \Z_2[\widehat{\Hom(\Z_2, G_n)}]$ with
the following properties:
\begin{enumerate}
\item[(1)]
   $d(f)=0$;
   \item[(2)]
   $0<\deg f\leq n$;
 \item[(3)] each $f$ is associated with a  subspace $V_f$ of dimension $n-\deg f$ in $\Hom(\Z_2, G_n)$
 such that
 the cosets of $V_f$ in $\Hom(\Z_2, G_n)$ determined by all degree-one factors of each monomial of $f$ are  linearly independent
 in $\Hom(\Z_2, G_n)/V_f$.
\end{enumerate}

\begin{prop} \label{key}
Let  $f$ be a polynomial in  $\mathcal{S}$. Then $f$ is
$G_n$-colorable.
\end{prop}
\begin{proof}
We shall perform an induction on $\deg f$. If $\deg f=1$, since
$d(f)=0$, then there must be an even number of distinct monomials
$s_1, ..., s_{2r}$ of degree one in $\Z_2[\widehat{\Hom(\Z_2,
G_n)}]$ such that $f=s_1+\cdots+s_{2r}$. Each pair $(s_{2i-1},
s_{2i})$ can give a $G_n$-coloring $\lambda_i$ on $\Delta^1$ with
two facets colored by $s_{2i-1}, s_{2i}$ respectively,  so that the
coloring polynomial $g_{(\Delta^1, \lambda_i)}$ of $(\Delta^1,
\lambda_i)$ is $s_{2i-1}+s_{2i}$, where $1\leq i\leq r$. Thus
$f=g_{(\Delta^1, \lambda_1)}+\cdots+g_{(\Delta^1, \lambda_r)}$, and
so $f$ is $G_n$-colorable.

\vskip .1cm Now suppose inductively that $f$ is $G_n$-colorable if
$\deg f\leq k<n$. Consider the case in which $\deg f=k+1$. Without
loss of generality, assume that $V_f$ is generated by $\rho_1^*,
..., \rho^*_{n-k-1}$, and there is at least one monomial of $f$
which contains the degree-one factor of the form $s$ with $s+V_f=
\rho^*_{n-k}+V_f$ in $\Hom(\Z_2, G_n)/V_f$ (if not, we may always
use an automorphism of $\H(\Z_2, G_n)$ to do this). Then we may
write $f$ as
\begin{equation} \label{eq1}
f=s_1a_1+\cdots+s_ra_r+f'\end{equation} where $s_1, ..., s_r$ are
distinct degree-one elements in $\Z_2[\widehat{\Hom(\Z_2, G_n)}]$
such that $s_i+V_f=\rho^*_{n-k}+V_f, i=1, ...,r$, and after reducing
modulo $V_f$, each monomial of $f'$  contains no degree-one factor
$\rho^*_{n-k}$. Since $d(f)=0$, by a direct calculation we have that
$$\big(s_1d(a_1)+\cdots+s_rd(a_r)\big)+\big(a_1+\cdots+a_r+d(f')\big)=0.$$
Since $a_1+\cdots+a_r+d(f')$ contains no degree-one factors $s_1,
..., s_r$, we can conclude that
$s_1d(a_1)+\cdots+s_rd(a_r)=a_1+\cdots+a_r+d(f')=0$ in
$\Z_2[\widehat{\Hom(\Z_2, G_n)}]$. Moreover, from
$s_1d(a_1)+\cdots+s_rd(a_r)=0$, an easy argument shows that
$d(a_i)=0, i=1, ..., r$, so  $a_i\in \mathcal{S}, i=1, ..., r$. From
$a_1+\cdots+a_r+d(f')=0$, we have that
$a_r=a_1+\cdots+a_{r-1}+d(f')$, so (\ref{eq1}) becomes
\begin{eqnarray*}
f &=& s_1a_1+\cdots +s_{r-1}a_{r-1}+s_r\big(a_1+\cdots+a_{r-1}+d(f')\big)+f'\\
  &= &(s_1+s_r)a_1+\cdots+ (s_{r-1}+s_r)a_{r-1}+s_rd(f')+f'\\
  &=& (s_1+s_r)a_1+\cdots+ (s_{r-1}+s_r)a_{r-1}+ d(s_rf').
\end{eqnarray*}
Since $\deg a_i=k<k+1$ and $a_i\in \mathcal{S}$, by induction
hypothesis, we have that each $a_i$ is $G_n$-colorable. Obviously,
$s_1+s_r, ..., s_{r-1}+s_r$ are all $G_n$-colorable. Thus, by
Proposition~\ref{formula}, it follows that $(s_1+s_r)a_1+\cdots+
(s_{r-1}+s_r)a_{r-1}$ is $G_n$-colorable.

\vskip .1cm Now, to complete the proof, it merely needs to show that
$d(s_rf')$ is $G_n$-colorable. Our argument proceeds as follows:
Write $\psi=s_rf'$ and $s_r=\rho^*_{n-k}+\delta_{n-k}$ where
$\delta_{n-k}\in V_f$. As is known as above, after reducing modulo
$V_f$,  each monomial of $f'$ contains no degree-one factor
$\rho^*_{n-k}$. Thus,
 $\psi$ is actually a squarefree homogeneous polynomial of degree $k+2$ in
 $\Z_2[\widehat{\Hom(\Z_2, G_n)}]$.
Now for each monomial $s_r\alpha_1\cdots\alpha_{k+1}$ of $\psi$, we
know by the definition of $\mathcal{S}$ that $\rho^*_1, ...,
\rho^*_{n-k-1}, \alpha_1, ..., \alpha_{k+1}$ form a basis of
$\H(\Z_2, G_n)$. Then we may choose an automorphism $\sigma$ of
$\Hom(\Z_2, G_n)$ such that the restriction of $\sigma$ to $V_f$ is
the identity, and $\sigma(s_r\alpha_1\cdots\alpha_{k+1})$ is of the
following form
\begin{eqnarray*}
\rho^*_{n-k}\cdots \rho^*_n
(\rho^*_{n-k}+\cdots+\rho^*_{n-k+j}+\beta)
\end{eqnarray*}
where $1\leq j\leq k$, and $\beta\in V_f$. Write
$$\sigma(\psi)=\rho^*_{n-k}\cdots \rho^*_{n-k+j}
(\rho^*_{n-k}+\rho^*_{n-k+1}+\cdots+\rho^*_{n-k+j}+\beta)b_j+\psi'$$
where each monomial of $\psi'$ does not contain $\rho^*_{n-k}\cdots
\rho^*_{n-k+j}
(\rho^*_{n-k}+\rho^*_{n-k+1}+\cdots+\rho^*_{n-k+j}+\beta)$, and the
cosets of all degree-one factors modulo $\text{Span}\{\rho_1^*, ...,
\rho^*_{n-k+j}\}$ of each monomial of $b_j$ are linearly independent
in $\H(\Z_2, G_n)/\text{Span}\{\rho_1^*, ..., \rho^*_{n-k+j}\}$ if
$j<k$ (otherwise, $b_j=1$ if $j=k$). In a similar way to the
argument of Lemma~\ref{basis}, by calculation we have that
\begin{align*} d(\sigma(\psi))= & \rho^*_{n-k}\cdots \rho^*_{n-k+j}
(\rho^*_{n-k}+\rho^*_{n-k+1}+\cdots+\rho^*_{n-k+j}+\beta)d(b_j)\\
&+b_jd\big(\rho^*_{n-k}\cdots \rho^*_{n-k+j}
(\rho^*_{n-k}+\rho^*_{n-k+1}+\cdots+\rho^*_{n-k+j}+\beta)\big)+d(\psi').
\end{align*}
Since $ d(\sigma(\psi))\in \mathcal{S}$, this forces $d(b_j)$ to be
zero, so $b_j\in \mathcal{S}$ if $j<k$. This means that $\psi$ is a
linear combination of the polynomials $\tau(\psi_j)$, where
$\psi_j=\rho^*_{n-k}\cdots
\rho^*_{n-k+j}(\rho^*_{n-k}+\rho^*_{n-k+1}+\cdots+\rho^*_{n-k+j}+\beta)b_j$
with $b_j\in \mathcal{S}$ if $j<k$ and $b_k=1$ if $j=k$, and $\tau$
is chosen in $\text{Aut}(\Hom(\Z_2, G_n))$ such that the restriction
of $\tau$ to $V_f$ is the identity. We see from the proof of
Lemma~\ref{simplex} that $d\big(\rho^*_{n-k}\cdots
\rho^*_{n-k+j}(\rho^*_{n-k}+\rho^*_{n-k+1}+\cdots+\rho^*_{n-k+j}+\beta)\big)$
is $G_n$-colorable. When $j<k$, we have that $\deg b_j=k-j<\deg
f=k+1$, so by induction hypothesis again, $b_j$ is $G_n$-colorable.
 Then  we obtain by Proposition~\ref{formula} that for $1\leq j\leq k$,
 $$d(\psi_j)=b_jd\big(\rho^*_{n-k}\cdots \rho^*_{n-k+j}(\rho^*_{n-k}+\rho^*_{n-k+1}+\cdots+
 \rho^*_{n-k+j}+\beta)\big)$$ is $G_n$-colorable.
Moreover, since $d(\psi)$ is a linear combination of the polynomials
$d(\tau(\psi_j))$ and since $ d(\tau(\psi_j))=\tau(d(\psi_j))$ by
Lemma~\ref{commut},  we then conclude by the property $(**)$ that
$d(\psi)$ is $G_n$-colorable. This completes the proof.
\end{proof}

\begin{cor}\label{colorable1}
Each polynomial of $\mathcal{V}_n^*$ is $G_n$-colorable.
\end{cor}

\begin{proof}
This is because each polynomial of $\mathcal{V}_n^*$ also belongs to $\mathcal{S}$.
\end{proof}

\begin{cor}\label{generator}
$\mathcal{Z}_n(G_n)$ is generated by the classes of small covers
over $\Delta^{n_1}\times\cdots\times \Delta^{n_\ell}$ where
$n_1+\cdots +n_\ell=n$.
\end{cor}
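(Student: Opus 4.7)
The plan is to combine the isomorphism $\mathfrak{M}_n\cong\mathcal{V}_n\cong\mathcal{V}_n^*$ (Proposition~\ref{iso} together with Remark~\ref{iso-rem}) with the colorability statement of Proposition~\ref{colorable1}, and then invoke Proposition~\ref{small} to translate coloring polynomials of $G$-colored products of simplices back into equivariant cobordism classes of small covers. The main point is that everything on the combinatorial/polynomial side has already been matched geometrically, so the corollary is essentially a dictionary argument.

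First I would start with an arbitrary class $\{M\}\in\mathfrak{M}_n$ and consider $g=\phi_n(\{M\})\in\IM\phi_n\cap\mathcal{V}_n$, together with its dual $g^*\in\mathcal{V}_n^*$. By Proposition~\ref{colorable1}, $g^*$ is $G$-colorable, so there exist $G$-colored simple polytopes $(P_1,\lambda_1),\dots,(P_l,\lambda_l)$, each $P_i$ a product of simplices $\Delta^{n_{i,1}}\times\cdots\times\Delta^{n_{i,\ell_i}}$ with $\sum_j n_{i,j}=n$, such that $g^*=\sum_{i=1}^l g_{(P_i,\lambda_i)}$. Next I would invoke Davis--Januszkiewicz's construction (reviewed in Subsection~\ref{small}): a $G$-coloring on a simple convex $n$-polytope produces a small cover $\pi_i\colon M_i^n\to P_i$ whose characteristic function is $\lambda_i$. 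One then reads off from Proposition~\ref{small} that $g_{(P_i,\lambda_i)}$ is the dual of $g_{(\Gamma_{M_i},\alpha_i)}=\phi_n(\{M_i\})$.

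Dualizing the identity $g^*=\sum_i g_{(P_i,\lambda_i)}$ (the dualization $g\mapsto g^*$ between $\mathcal{V}_n$ and $\mathcal{V}_n^*$ is $\Z_2$-linear by construction in Subsection~\ref{dual poly}), I obtain
\[
\phi_n(\{M\})=g=\sum_{i=1}^l g_{(\Gamma_{M_i},\alpha_i)}=\sum_{i=1}^l\phi_n(\{M_i\}).
\]
Since $\phi_n$ restricted to $\mathfrak{M}_n$ is a monomorphism, this gives $\{M\}=\sum_{i=1}^l\{M_i\}$ in $\mathfrak{M}_n$, with each $M_i$ a small cover over a product of simplices. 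Hence the classes of such small covers span $\mathfrak{M}_n$.

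I do not expect a serious obstacle here, because the heavy lifting has been absorbed into Proposition~\ref{colorable1} (which in turn rests on Lemmas~\ref{basis} and~\ref{key}). The only point requiring a brief sanity check is that the decomposition $g^*=\sum_i g_{(P_i,\lambda_i)}$ is compatible with the duality map defining $g\mapsto g^*$ term-by-term; this is immediate since both the pairing (\ref{pairing}) and dualization act monomial-wise on faithful $G$-polynomials, and each $g_{(P_i,\lambda_i)}$ is built from faithful monomials arising at the vertices of $P_i$ by Subsection~\ref{small}. Once this bookkeeping is noted, the corollary follows at once.
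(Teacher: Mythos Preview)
Your argument is correct and follows exactly the route the paper takes: invoke Proposition~\ref{colorable1} to write $g^*$ as a sum of coloring polynomials of $G$-colored products of simplices, use the Davis--Januszkiewicz reconstruction together with Proposition~\ref{small} to realize each summand as $\phi_n$ of a small cover, and then use the injectivity implicit in Proposition~\ref{iso}. The paper's proof is just a one-sentence pointer to Propositions~\ref{iso} and~\ref{colorable1} plus the Davis--Januszkiewicz construction; you have simply unpacked the same logic, including the (correct) observation that monomial-wise duality is a $\Z_2$-linear bijection between faithful polynomials, so the decomposition can be dualized term by term.
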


\begin{proof}
We note  that by \cite[\S1.5. The basic construction]{dj},  each
$G_n$-colored simple convex $n$-polytope
 $(P^n, \lambda)$ can reconstruct an $n$-dimensional small cover $M(P^n,\lambda)$ over $P^n$,
 and  by Proposition~\ref{small},   $\phi_n(\{M(P^n, \lambda)\})$ is the dual polynomial of
 $g_{(P^n, \lambda)}\in\mathcal{V}_n^*$. Then  Corollary~\ref{generator}  immediately follows from Proposition~\ref{iso} and Corollary~\ref{colorable1}.
\end{proof}

\subsection{Proofs of Theorems~\ref{conj}--\ref{ring}} \label{proof of thm}

Now we can first give the proof of Theorem~\ref{ring}.\vskip .2cm

\noindent {\em Proof of Theorem~\ref{ring}.} This is a direct
consequence of Corollary~\ref{generator} and
Proposition~\ref{formula}. \hfill $\Box$

\vskip .2cm

Next let us finish the proof of Theorem~\ref{conj}.

\vskip .2cm

\noindent {\em Proof of Theorem~\ref{conj}.}
    We have known from the proof of Corollary~\ref{generator} that the $G_n$-coloring polynomial
    of each $G_n$-colored simple convex $n$-polytope $(P^n, \lambda)$ uniquely determines an equivariant unoriented cobordism
   class containing the small cover $M(P^n, \lambda)$ over $P^n$ as its representative.
   Thus, by Proposition~\ref{iso} we only need to show that each polynomial in $\mathcal{V}_n^*$ is
   the $G_n$-coloring polynomial of a $G_n$-colored simple convex $n$-polytope. In order to prove this, by Corollary~\ref{colorable1} it suffices  to show that

 \vskip .2cm
 \noindent
 {\bf Claim B.} {\em Let $f_1$ and $f_2$ be two polynomials in $\mathcal{V}^*_n$.
 If $f_1$ and $f_2$ are the coloring polynomials of two $G_n$-colored simple convex $n$-polytopes
 $(P_1, \lambda_1)$ and $(P_2, \lambda_2)$ respectively, then $f_1+f_2$ is the coloring polynomial
 of a $G_n$-colored simple convex $n$-polytope, too.}

\vskip .2cm

If $f_1$ and $f_2$ have the same monomial $s_1\cdots s_n$, then
there must be a vertex $v_1$ of $P_1$ and a vertex $v_2$ of $P_2$
such that the coloring monomials of $v_1$ and $v_2$ are same and
equal to $s_1\cdots s_n$, i.e.,
$\lambda_{1v_1}=\lambda_{2v_2}=s_1\cdots s_n$. Now by
Proposition~\ref{sum formula} we can perform a connected sum
$P_1\sharp_{v_1, v_2}P_2$ of $(P_1, \lambda_1)$ and $(P_2,
\lambda_2)$ at $v_1$ and $v_2$, such that $P_1\sharp_{v_1, v_2}P_2$
is a simple convex polytope and naturally admits a $G_n$-coloring
$\lambda_1\sharp_{v_1, v_2}\lambda_2$ induced by $\lambda_1$ and
$\lambda_2$, and in particular, $f_1+f_2$ is the coloring polynomial
of $(P_1\sharp_{v_1, v_2}P_2, \lambda_1\sharp_{v_1, v_2}\lambda_2)$.

\vskip .2cm

Suppose that $f_1$ and $f_2$ contain no same monomial.
If $f_1$ contains a monomial $s_1s_2\cdots s_n$ but $f_2$
contains a monomial $\widetilde{s_1}s_2\cdots s_n$, then there must be a vertex $v_1$ of $P_1$ and a vertex $v_2$ of $P_2$ such that $\lambda_{1v_1}=s_1s_2\cdots s_n$ and   $\lambda_{2v_2}=\widetilde{s_1}s_2\cdots s_n$. Consider the colored simple convex polytope $Q_1=\Delta^{n-1}\times \Delta^1$ with coloring $\lambda'$ such that $n$ facets $F_1\times \Delta^1, ..., F_n\times \Delta^1$ are colored by $s_2, ..., s_n, s_2+\cdots +s_n$ and two facets $\Delta^{n-1}\times *_1, \Delta^{n-1}\times *_2$ are colored by $s_1, \widetilde{s_1}$, where
$F_1, ..., F_n$ denote all facets of $\Delta^{n-1}$ and $*_1, *_2$ denote two facets of $\Delta^1$.
Obviously, there are two vertices $u_1, u_2$ of $Q_1$ such that $\lambda'_{u_1}=s_1s_2\cdots s_n$ and
$\lambda'_{u_2}=\widetilde{s_1}s_2\cdots s_n$. Choose a vertex $q_1$ of $Q_1$ such that $\lambda'_{q_1}=s_1s_3\cdots s_n
(s_2+\cdots+s_n)$. Then we can perform a connected sum of $P_1,P_2, Q_1$ to get the following simple convex polytope
$$P=P_1\sharp_{v_1, u_1} Q_1\sharp_{q_1,q_1}Q_1\sharp_{u_2, v_2}P_2$$
which admits a $G_n$-coloring $\lambda$ determined by $\lambda_1,
\lambda_2, \lambda'$. Furthermore, by Proposition~\ref{sum formula}
we see that $f_1+f_2$ is the coloring polynomial of $(P, \lambda)$.

\vskip .2cm
Generally, if $f_1$ contains a monomial $s_1s_2\cdots s_n$ but $f_2$
contains a different monomial $\widetilde{s_1}\widetilde{s_2}\cdots \widetilde{s_n}$, using the above constructed colored polytope $(Q_1, \lambda')$, we first can obtain a colored simple convex polytope $(P_1\sharp_{v_1, u_1} Q_1\sharp_{q_1,q_1}Q_1, \lambda^{(1)})$ such that there is a vertex $p_1$ with its coloring monomial $\lambda_{p_1}^{(1)}=\widetilde{s_1}s_2\cdots s_n$. In the same way as above, we construct a colored simple convex polytope $Q_2=\Delta^{n-1}\times \Delta^1$ with coloring $\lambda''$ such that $n$ facets $F_1\times \Delta^1, ..., F_n\times \Delta^1$ are colored by $\widetilde{s_1}, s_3, ..., s_n, \widetilde{s_1}+s_3+\cdots +s_n$ and two facets $\Delta^{n-1}\times *_1, \Delta^{n-1}\times *_2$ are colored by $s_2, \widetilde{s_2}$. In particular, it is not difficult to see that there are two vertices $p'_1$ and $p''_1$ such that $\lambda''_{p'_1}=\widetilde{s_1}s_2\cdots s_n$
and $\lambda''_{p''_1}=\widetilde{s_1}\widetilde{s_2}s_3\cdots s_n$. Take the vertex $q_2$ in $Q_2$ such that $\lambda''_{q_2}=s_2\cdots s_n(\widetilde{s_1}+s_3+\cdots +s_n)$. Now by doing connected sum we can construct the following colored simple convex polytope
$$(P_1\sharp_{v_1, u_1} Q_1\sharp_{q_1,q_1}Q_1\sharp_{p_1, p'_1}Q_2\sharp_{q_2, q_2}Q_2, \lambda^{(2)})$$
such that there is a vertex $p_2$ with $\lambda^{(2)}_{p_2}=\widetilde{s_1}\widetilde{s_2}s_3\cdots s_n$ in this new colored simple convex polytope.
Continuing this procedure, we can further construct a series of colored simple convex polytopes $Q_3,  ..., Q_n$, so that finally we can obtain a colored simple convex polytope
$$P'=P_1\sharp_{v_1, u_1} Q_1\sharp_{q_1,q_1}Q_1\sharp_{p_1, p'_1}Q_2\sharp_{q_2, q_2}Q_2\sharp_{p_2, p'_2}\cdots
\sharp_{p_{n-1}, p'_{n-1}}Q_n\sharp_{q_n,q_n}Q_n$$ with
$G_n$-coloring $\lambda^{(n)}$ such that there is a vertex $p_n$ of
$P'$ with $\lambda^{(n)}_{p_n}=\widetilde{s_1}\widetilde{s_2}\cdots
\widetilde{s_n}$. Now let $v_2$ be the vertex of $P_2$ such that
$\lambda_{2v_2}=\widetilde{s_1}\widetilde{s_2}\cdots
\widetilde{s_n}$. Then we can get a colored simple convex polytope
$(P'\sharp_{p_n, v_2}P_2, \lambda)$  as desired. Thus, Claim B
holds. This completes the proof of Theorem~\ref{conj}. \hfill$\Box$

\begin{cor} \label{relation}
A faithful $G_n$-polynomial $g\in \Z_2[\widehat{\Hom(G_n,{\Bbb
Z}_2)}]$ belongs to $\IM\phi_n$ if and only if its dual polynomial
 $g^*$ is the $G_n$-coloring polynomial of a $G_n$-colored simple convex polytope $(P^n, \lambda)$.
\end{cor}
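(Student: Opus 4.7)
The plan is to bolt together three pieces of machinery already assembled in the paper: Theorem~\ref{main result}, Proposition~\ref{colorable1}, and the constructive Claim~A embedded in the proof of Theorem~\ref{conj}. The corollary essentially packages these into a single biconditional, so no new ideas are required, only careful assembly.

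For the ``if'' direction I would argue constructively. Suppose $g^{*}=g_{(P^n,\lambda)}$ for some $G$-colored simple convex $n$-polytope $(P^n,\lambda)$. Applying the Davis--Januszkiewicz basic construction from \cite{dj} produces an $n$-dimensional small cover $\pi\colon M^n\to P^n$ whose characteristic function is $\lambda$. Proposition~\ref{small} then identifies $g_{(P^n,\lambda)}=g^{*}$ with the dual of $g_{(\Gamma_M,\alpha)}=\phi_n(\{M^n\})$, so dualising once more gives $g=\phi_n(\{M^n\})\in\IM\phi_n$.

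For the ``only if'' direction, assume $g\in\IM\phi_n$. Theorem~\ref{main result} gives $d(g^{*})=0$, whence $g^{*}\in\mathcal{V}_n^{*}$. Proposition~\ref{colorable1} then supplies a decomposition $g^{*}=\sum_{i=1}^{\ell}g_{(P_i,\lambda_i)}$ into coloring polynomials of $G$-colored products of simplices. The remaining task is to fuse these $\ell$ summands into the coloring polynomial of a single $G$-colored simple convex $n$-polytope. For this I would invoke Claim~A proved inside the proof of Theorem~\ref{conj}: it says that the sum of any two coloring polynomials of $G$-colored simple convex $n$-polytopes is again the coloring polynomial of one such polytope, built by a sequence of connected sums (with auxiliary prisms $\Delta^{n-1}\times\Delta^1$ inserted to bridge differing coloring monomials) together with the connected-sum formula Lemma~\ref{sum formula}. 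Iterating this binary fusion $\ell-1$ times produces the desired $(P^n,\lambda)$ with $g_{(P^n,\lambda)}=g^{*}$.

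The main obstacle is not located in the present corollary but inside Claim~A, which constituted the technical heart of Theorem~\ref{conj}: two polytope summands generically share no common coloring monomial at any vertex, so they cannot be connect-summed directly, and one is forced to interpolate through the prisms $Q_j=\Delta^{n-1}\times\Delta^1$ in order to gradually modify the coloring monomial at one vertex into that at another. Granted Claim~A, the induction on $\ell$ is routine, and the corollary follows by the straightforward two-way assembly sketched above.
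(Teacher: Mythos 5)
Your proposal is correct and takes essentially the same route as the paper: Corollary~\ref{relation} is presented there as an immediate byproduct of the proof of Theorem~\ref{conj}, whose ``only if'' half is exactly your combination of Theorem~\ref{main result}, Proposition~\ref{colorable1} and the iterated connected sums of Claim~A, and whose ``if'' half is the small-cover reconstruction via \cite{dj} and Proposition~\ref{small} already recorded in the proof of Corollary~\ref{generator}. No further comment is needed.
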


\subsection{Determination of $\mathcal{Z}_4(G_4)$}\label{4-dim}

By Proposition~\ref{iso}, it suffices to determine the concrete
structure of $\mathcal{V}_4^*$.

\begin{lem}\label{v}
$\mathcal{V}_4^*$ is generated by the polynomials of the form
$$d\big(s_1s_2(s_1+s_2)\big)d\big(s_3s_4(s_3+s_4+\varepsilon s_1)\big)$$
where $\{s_1, s_2, s_3, s_4\}$ is a basis of $\Hom(\Z_2, G_4)$ and
$\varepsilon=0$ or 1.
\end{lem}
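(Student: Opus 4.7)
The plan is to combine Proposition~\ref{colorable1} with a finite combinatorial case analysis specific to dimension four. By Proposition~\ref{colorable1}, every polynomial in $\mathcal{V}_4^*$ is $G$-colorable, hence a $\Z_2$-linear combination of coloring polynomials $g_{(P,\lambda)}$ of $G$-colored simple convex $4$-polytopes $P$ that are products of simplices. For $n=4$ the possible product types are $\Delta^4$, $\Delta^3\times\Delta^1$, $\Delta^2\times\Delta^2$, $\Delta^2\times\Delta^1\times\Delta^1$, and $(\Delta^1)^4$. It therefore suffices to express every such coloring polynomial as a $\Z_2$-linear combination of the specified generators $d(s_1s_2(s_1+s_2))\cdot d(s_3s_4(s_3+s_4+\varepsilon s_1))$.

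I first classify the $G$-colorings of $\Delta^2\times\Delta^2$ up to the $\text{\rm Aut}(\Hom(\Z_2,(\Z_2)^4))$-action. Label the three facets of the two factors by $\{a_1,a_2,a_3\}$ and $\{b_1,b_2,b_3\}$; each triple consists of pairwise linearly independent nonzero elements, so its $\Z_2$-span has dimension $2$ or $3$. The vertex condition forces $\{a_i,a_j,b_k,b_l\}$ to be a basis for every $i<j$ and $k<l$. A short analysis rules out both spans being $3$-dimensional simultaneously. Assuming the first span is $2$-dimensional, an automorphism normalizes the first factor to have facets $(s_1,s_2,s_1+s_2)$. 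The second factor's span is then either $2$-dimensional and complementary, giving (after an automorphism fixing $s_1,s_2$) facets $(s_3,s_4,s_3+s_4)$ (the $\varepsilon=0$ case), or $3$-dimensional meeting the first span in the line spanned by $b_1+b_2+b_3$, which normalizes to $(s_3,s_4,s_3+s_4+s_1)$ (the $\varepsilon=1$ case). By Lemma~\ref{formula}, $g_{(\Delta^2\times\Delta^2,\lambda)}$ coincides with one of the two products appearing in the statement.

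For each of the remaining four product types I construct an explicit $\Z_2$-linear identity writing $g_{(P,\lambda)}$ as a sum of coloring polynomials of $G$-colored $\Delta^2\times\Delta^2$'s of the above form. The tools are the connected-sum formula (Lemma~\ref{sum formula}) together with the product formula (Lemma~\ref{formula}): one constructs an auxiliary finite collection of $G$-colored $\Delta^2\times\Delta^2$'s whose vertex monomials match those of $(P,\lambda)$ up to pairwise $\Z_2$-cancellation. For instance, for $\Delta^4$ with its symmetric coloring by $s_1,s_2,s_3,s_4,s_1+s_2+s_3+s_4$, one chooses a family of $\Delta^2\times\Delta^2$'s whose vertex monomials sum, after cancellations, to the five vertex monomials of $\Delta^4$; analogous constructions handle $\Delta^3\times\Delta^1$, $\Delta^2\times\Delta^1\times\Delta^1$, and $(\Delta^1)^4$. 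This is the main obstacle: finding the correct auxiliary colorings (especially for $\Delta^4$, whose coloring polynomial has a very different monomial structure from any single $\Delta^2\times\Delta^2$) requires delicate combinatorial choices. The count $\dim_{\Z_2}\mathfrak{M}_4=510$ recorded in Proposition~\ref{dim-4} provides a consistency check for the generating set so obtained.
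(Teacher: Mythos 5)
There is a genuine gap. The reduction via Proposition~\ref{colorable1} and your classification of the $G$-colorings of $\Delta^2\times\Delta^2$ up to $\text{\rm Aut}(\Hom(\Z_2,(\Z_2)^4))$ (showing its coloring polynomial is one of the two forms with $\varepsilon=0,1$) are sound and agree with the paper. But the entire substance of the lemma lies in the step you only describe: actually exhibiting, for each remaining product type, a finite family of $G$-colored $\Delta^2\times\Delta^2$'s whose coloring polynomials sum to $g_{(P,\lambda)}$. You assert that such families exist and can be found ``after cancellations,'' and you yourself identify this as ``the main obstacle,'' but you never produce them. It is not a priori clear that the five-monomial polynomial $g_{(\Delta^4,\lambda)}$ is a mod~2 sum of nine-monomial polynomials of the prescribed form, and the count $\dim_{\Z_2}\mathfrak{M}_4=510$ cannot serve as a consistency check, since Proposition~\ref{dim-4} is itself deduced from this lemma. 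The paper closes exactly this gap by explicit computation: it records the general forms $h_1,h_2,h_3$ of the coloring polynomials of $\Delta^4$, $\Delta^3\times\Delta^1$, $\Delta^2\times\Delta^2$, writes $h_1=h_{11}+h_{12}$ with $h_{11}$ of type $\Delta^3\times\Delta^1$ and $h_{12}=d\big(s_1s_2(s_1+s_2)\big)d\big(s_3s_4(s_1+s_2+s_3+s_4)\big)$ of the required type, and then decomposes $h_2$ explicitly into four (resp.\ six) summands $h_{2j}$, each visibly a product $d(\cdot)d(\cdot)$ of the stated shape. Some such identities (or equivalents) must be supplied for your argument to establish the statement.

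A secondary point: you correctly note that Definition~\ref{colorable} admits all products of simplices, so in principle $\Delta^2\times\Delta^1\times\Delta^1$ and $(\Delta^1)^4$ must also be treated (the paper's own proof silently restricts to $\Delta^4$, $\Delta^3\times\Delta^1$ and $\Delta^2\times\Delta^2$; tracing Lemma~\ref{basis}, the type $(\Delta^1)^4$ in fact never arises, while $\Delta^2\times\Delta^1\times\Delta^1$ does). Being exhaustive here is to your credit, but it enlarges, rather than discharges, the list of explicit decompositions you still owe.
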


\begin{proof}
 First, Corollary~\ref{colorable1} tells us that  $\mathcal{V}^*_4$ is generated by coloring polynomials of colored polytopes $(\Delta^4, \lambda_4),  (\Delta^3\times \Delta^1, \lambda_{31})$ and $(\Delta^2\times \Delta^2, \lambda_{22})$. By direct calculations,  we may obtain that
 the coloring polynomials of colored polytopes $(\Delta^4, \lambda_4),  (\Delta^3\times \Delta^1, \lambda_{31})$ and $(\Delta^2\times \Delta^2, \lambda_{22})$  are of the forms
$$
\begin{cases}
h_1=d\big(s_1s_2s_3s_4(s_1+s_2+s_3+s_4)\big)  \\
h_2=d\big(s_1s_2s_3s_4(s_1+s_2+s_3)+s_1s_2s_3(s_1+s_2+s_3)(s_4+s_1+as_2)\big)\\
h_3=d\big(s_1s_2(s_1+s_2)\big)d\big(s_3s_4(s_3+s_4+\varepsilon s_1)\big)
\end{cases}$$
respectively, where $a, \varepsilon=0$ or 1. Set
$\Lambda_i=\{\sigma(h_i)| \sigma\in \text{Aut}(\Hom(\Z_2, G_4))\}$.
Now, to complete the proof, it suffices to prove that

\vskip .2cm

\noindent {\bf Claim C.} {\em For each $h\in \Lambda_1\cup \Lambda_2$, $h$ is a linear combination of polynomials in $\Lambda_3$.}

 \vskip .2cm
 Up to automorphisms of $\Hom(\Z_2, G_4)$, this is equivalent to showing that $h_1$ and $h_2$ can be expressed as linear combinations of polynomials in $\Lambda_3$.
By direct calculations we have that
$h_1=h_{11}+h_{12}$
where
$$\begin{cases}
h_{11}=d\big(s_1(s_1+s_2)s_3s_4(s_1+s_2+s_3+s_4)+s_2(s_1+s_2)s_3s_4(s_1+s_2+s_3+s_4)\big)\\
h_{12}=d\big(s_1s_2(s_1+s_2)\big)d\big(s_3s_4(s_1+s_2+s_3+s_4)\big).
\end{cases}
$$
and $$h_2=h_{21}+h_{22}+
\begin{cases}
h_{23}+h_{24}  & \text{ if $a=1$}\\
h'_{23}+h'_{24}+h_{25}+h_{26} & \text{ if $a=0$}
\end{cases}$$
where
$$
\begin{cases}
h_{21}=d\big(s_2s_3(s_2+s_3)\big)d\big(s_1s_4(s_1+s_4+as_2)\big)  \\
h_{22}=d\big(s_2s_3(s_2+s_3)\big)d\Big(s_4(s_1+s_2+s_3)\big(s_4+(s_1+s_2+s_3)+as_2+s_2+s_3\big)\Big)\\
h_{23}=d\big(s_1(s_2+s_3)(s_1+s_2+s_3)\big)d\Big(s_3s_4\big(s_3+s_4+(s_1+s_2+s_3)\big)\Big)\\
h_{24}=d\big(s_1(s_2+s_3)(s_1+s_2+s_3)\big)d\Big(s_2s_4\big(s_2+s_4+s_1)\big)\Big)\\
h'_{23}=d\big(s_1(s_2+s_3)(s_1+s_2+s_3)\big)d\big(s_3s_4(s_3+s_4)\big)\\
h'_{24}=d\big(s_1(s_2+s_3)(s_1+s_2+s_3)\big)d\Big(s_2s_4\big(s_2+s_4+(s_2+s_3)\big)\Big)\\
h_{25}=d\big(s_1(s_2+s_3)(s_1+s_2+s_3)\big)d\Big(s_2(s_1+s_4)\big(s_2+(s_1+s_4)+(s_1+s_2+s_3)\big)\Big)\\
h_{26}=d\big(s_1(s_2+s_3)(s_1+s_2+s_3)\big)d\Big(s_3(s_1+s_4)\big(s_3+(s_1+s_4)+s_1\big)\Big).\\
\end{cases}$$
It is easy to check that $h_{11}\in \Lambda_2$ and $h_{12}, h_{21}, h_{22},
h_{23}, h_{24}, h'_{23}, h'_{24}, h_{25}, h_{26}\in \Lambda_3$. This proves Claim C, and thus we complete the proof of Lemma~\ref{v}.
\end{proof}

\noindent {\em Proof of Proposition~\ref{dim-4}.} We have known from
Lemma~\ref{v} that $\mathcal{V}_4^*$ is generated by those
polynomials of the form
$d\big(s_1s_2(s_1+s_2)\big)d\big(s_3s_4(s_3+s_4+\varepsilon
s_1)\big)$, each of which is the coloring polynomial of a colored
polytope $(\Delta^2\times \Delta^2, \lambda_{22})$, where $\{s_1,
s_2, s_3, s_4\}$ is a basis of $\Hom(\Z_2, G_4)$ and $\varepsilon=0$
or 1. Then it immediately follows  that $\mathcal{Z}_4(G_4)$ is
generated by the classes of small covers over $\Delta^2\times
\Delta^2$.

\vskip .1cm Now let us consider the dimension of
$\mathcal{Z}_4(G_4)$. This is equivalent to determining the
dimension of $\mathcal{V}_4^*$. By $\mathcal{W}_4$ we denote the
linear space generated by those  degree-four monomials  of
$\Z_2[\widehat{\Hom(\Z_2, G_4)}]$ whose dual monomials are all
faithful. Then $\dim_{\Z_2}\mathcal{W}_4$ is equal to the number of
all bases in $\Hom(\Z_2, G_4)\cong (\Z_2)^4$, so by \cite[Remark
2.1]{l1}, $\dim_{\Z_2}\mathcal{W}_4=840$. As a subspace of
$\mathcal{W}_4$, $\mathcal{V}_4^*$ is generated by all polynomials
of the form
$d\big(s_1s_2(s_1+s_2)\big)d\big(s_3s_4(s_3+s_4+\varepsilon
s_1)\big)$. Thus, we need to determine a maximal linearly
independent subset of  all polynomials of the form
$d\big(s_1s_2(s_1+s_2)\big)d\big(s_3s_4(s_3+s_4+\varepsilon
s_1)\big)$ in $\mathcal{W}_4$. To do this, we can give an algorithm
and write a computer program to find such a maximal linearly
independent subset, which exactly contains 510 polynomials (we shall
state the algorithm and list all elements of a basis of
$\mathcal{V}_4^*$ in Section~\ref{computer} as an appendix). Thus we
conclude that
$\dim_{\Z_2}\mathcal{V}_4^*=\dim_{\Z_2}\mathcal{Z}_4(G_4)=510$.
\hfill$\Box$

\begin{rem}\label{m3}
In a similar way to Lemma~\ref{v}, we conclude easily that
$\mathcal{V}_3^*$ is generated by the polynomials of the form
$$d\big(s_1s_2(s_1+s_2)\big)d\big(s_3(s_1+s_3)\big)$$
where $\{s_1, s_2, s_3\}$ is a basis of $\Hom(\Z_2, G_3)$. So
$\mathcal{Z}_3(G_3)$ is generated by the classes of small covers
over $\Delta^1\times\Delta^2$. We can also list a basis of
$\mathcal{V}_3^*$ as follows:
\begin{center}
\begin{tabular}{|c|c|c|c|}
\hline &&&\\
$d(x_1x_6x_7)d(x_2x_5)$ & $d(x_1x_6x_7)d(x_3x_4)$ & $d(x_2x_5x_7)d(x_3x_4)$ & $d(x_2x_5x_7)d(x_1x_6)$\\
\hline
  $d(x_3x_4x_7)d(x_2x_5)$  &  $d(x_3x_4x_7)d(x_1x_6)$ & $d(x_3x_5x_6)d(x_2x_4)$ & $d(x_3x_5x_6)d(x_1x_7)$ \\
\hline
  $d(x_2x_4x_6)d(x_1x_7)$ & $d(x_1x_4x_5)d(x_3x_6)$ & $d(x_1x_4x_5)d(x_2x_7)$ & $d(x_2x_5x_7)d(x_1x_4)$\\
\hline
$d(x_3x_4x_7)d(x_2x_6)$ &  & &\\
\hline
\end{tabular}
\end{center}
where $x_1, x_2, ..., x_{7}$ denote the 7 nontrivial elements in
$\Hom(\Z_2,G_3)$ with
$$
\begin{cases}
x_1=\rho^*_1 & x_2=\rho^*_2 \\
 x_3=\rho^*_1+\rho^*_2 & x_4=\rho^*_3\\
x_5=\rho^*_1+\rho^*_3 &x_6=\rho^*_2+\rho^*_3 \\
x_7=\rho^*_1+\rho^*_2+\rho^*_3 & \text{$\{\rho^*_1, \rho^*_2,
\rho^*_3\}$ is the standard basis of $\Hom(\Z_2,G_3)$.}
\end{cases}$$
Therefore, we obtain that $\dim_{\Z_2}\mathcal{Z}_3(G_3)=13$. 
Example~\ref{exam-basis} provides us  a nonbounding effective
$(\Z_2)^3$-action on $S^1\times{\Bbb R}P^2$ with
$\Delta^1\times\Delta^2$ as its orbit space, and applying
automorphisms of $(\Z_2)^3$ to this effective action can give all
required basis elements of $\mathcal{Z}_3(G_3)$.
\end{rem}

 \section{A summary and further problems}\label{app-2}

 Together with Theorems~\ref{main result}, \ref{color poly} and Corollaries~\ref{diff-formula}, \ref{relation}, we see that there are some essential relationships among 2-torus manifolds, coloring polynomials, colored simple convex polytopes, colored graphs, which are stated as follows:

 \begin{thm}\label{summary}
 Let $g=\sum_i t_{i,1}\cdots t_{i, n}$ be a faithful $G_n$-polynomial in $\Z_2[\widehat{\Hom(G_n, \Z_2)}]$. Then the following
 statements are all equivalent.

\begin{enumerate}
\item[(1)] $g\in \IM \phi_n$ $($i.e., there is an $n$-dimensional 2-torus manifold $M^n$ such that $\sum_{p\in M^G}[\tau_pM]=g)$;

\item[(2)] $g$ is the $G_n$-coloring polynomial of a $G_n$-colored graph $(\Gamma, \alpha)$;

\item[(3)] $g=\sum_i t_{i,1}\cdots t_{i, n}$ possesses the property that for any symmetric
polynomial function $f(x_1, ..., x_n)$ over ${\Bbb Z}_2$,
$$\sum_{i}{{f(t_{i,1}, ..., t_{i, n})}\over{t_{i,1}\cdots t_{i, n}}}\in\Z_2[\Hom(G_n, \Z_2)];$$

\item[(4)] $d(g^*)=0$;

\item[(5)] $g^*$ is the $G_n$-coloring polynomial of a $G_n$-colored simple convex polytope $(P^n, \lambda)$
\end{enumerate}
where $g^*$ is the dual polynomial of $g$.
\end{thm}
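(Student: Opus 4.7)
The plan is to recognize that Theorem~\ref{summary} is a synthesis theorem: each of the five conditions has already been shown equivalent to condition (1) earlier in the paper, and so the proof reduces to citing the relevant results and assembling them into a single statement. It suffices to establish the four equivalences $(1)\Leftrightarrow(i)$ for $i=2,3,4,5$, since this immediately yields all ten pairwise equivalences.

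Concretely, I would proceed as follows. The equivalence $(1)\Leftrightarrow(2)$ is exactly the content of Theorem~\ref{color poly}, which was already proved by combining \cite[Proposition 2.2]{l2} with the construction of $(\Gamma_M,\alpha)$ from an element of $\mathfrak{M}_n$. The equivalence $(1)\Leftrightarrow(3)$ is the tom Dieck--Kosniowski--Stong localization theorem (Theorem~\ref{dks}); one only needs to note that under the isomorphism $H^*(BG;\Z_2)\cong \Z_2[\Hom(G,\Z_2)]$ the equivariant Euler class $\chi^G(\tau_i)$ is identified with the monomial $t_{i,1}\cdots t_{i,n}$, so that the polynomial in (\ref{formula-tks1}) becomes the expression appearing in $(3)$. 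The equivalence $(1)\Leftrightarrow(4)$ is Theorem~\ref{main result}. Finally, $(1)\Leftrightarrow(5)$ is Corollary~\ref{relation}, which in turn rests on Propositions~\ref{iso} and \ref{colorable1} together with the proof of Theorem~\ref{conj}.

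The only thing that needs any verification is that the various identifications --- between $H^*(BG;\Z_2)$ and $\Z_2[\Hom(G,\Z_2)]$, between isomorphism classes of irreducible nontrivial real $G$-representations and nontrivial elements of $\Hom(G,\Z_2)$, between the monomials of $g$ and the vertices of $\Gamma$ or of $P^n$, and between $g$ and its dual polynomial $g^*$ via the pairing (\ref{pairing}) --- are used consistently across the five formulations. Since all of these identifications are carefully set up in Section~\ref{dual-diff} and Section~\ref{colored graph-small cover}, no obstacle of substance arises; the work is entirely bookkeeping. Hence the theorem follows by direct citation, and its interest lies not in the (essentially trivial) proof but in recording the full circle of equivalent characterizations of $\IM\phi_n$ in terms of $G$-colored graphs, colored simple convex polytopes, the differential operator $d$, and the classical equivariant Stiefel--Whitney number formula.
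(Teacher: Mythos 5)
Your proposal is correct and matches the paper's own treatment: Theorem~\ref{summary} is presented there as an assembly of Theorem~\ref{color poly} for $(1)\Leftrightarrow(2)$, Theorem~\ref{dks} (via Corollary~\ref{diff-formula}) for $(3)$, Theorem~\ref{main result} for $(1)\Leftrightarrow(4)$, and Corollary~\ref{relation} for $(1)\Leftrightarrow(5)$, with no further argument. The bookkeeping of identifications you mention is exactly what the paper relies on as well.
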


 Based upon the above equivalent results, it seems to be interesting to discuss the properties of
 regular graphs and simple convex polytopes. We see by Theorem~\ref{conj} and Proposition~\ref{small}
 that $\Gamma$ in Theorem~\ref{summary}(2) can actually be chosen as the 1-skeleton of a polytope.
 However, for a $G_n$-colored graph $(\Gamma, \alpha)$, we don't know when $\Gamma$ will become the 1-skeleton of a polytope.  Indeed, given a graph, to determine whether it is the 1-skeleton of a polytope or not
 is a quite difficult problem except for the known Steinitz theorem (see \cite{g}). In addition, Corollary~\ref{indecom} gives a sufficient condition that a simple convex polytope with a coloring is indecomposable. These observations lead us  to pose  the following problems:
\begin{enumerate}
\item[(P1)] For a $G_n$-colored graph $(\Gamma, \alpha)$, under what condition will $\Gamma$ be the 1-skeleton of a polytope?

\item[(P2)] Given a $G_n$-colored simple convex polytope $(P^n, \lambda)$, can we give a necessary and sufficient condition that $P^n$ is indecomposable?
\end{enumerate}

\begin{rem}
On the problem (P2), it is not difficult to see from the proof of
Theorem~\ref{conj} that if $P^n$ is indecomposable, then $g_{(P^n,
\lambda)}$ may not be indecomposable in $\Z_2[\widehat{\Hom(\Z_2,
G_n)}]$. However, if we add a restriction condition that the number
of all monomials of $g_{(P^n, \lambda)}$ is equal to that of all
vertices of $P^n$, then an easy argument shows that when $n=3$,
$P^3$ is indecomposable if and only if  $g_{(P^3, \lambda)}$ is
indecomposable in $\Z_2[\widehat{\Hom(\Z_2, G_3)}]$. It should be
reasonable to conjecture that this is also true in the
higher-dimensional case.
\end{rem}

\section{The reformulation of the existence theorem of tom Dieck--a simple proof of Theorem~\ref{dks}}\label{re-proof}

\subsection{The existence theorem of tom Dieck} First let us review tom Dieck's work.  Following \cite[page 216]{d}, let ${\Bbb Z}_2[U_A]$ be the polynomial ring on $2^n-1$ generators $U_A$, where $A$ runs through the non-empty subsets of $\{1,2, ..., n\}$, and each $U_A$ is regarded as a nontrivial irreducible real $G_n$-module. Let
$$L={\Bbb Z}_2[b_1, b_2, ...][[w_{(1)}, ..., w_{(n)}]]$$
be the ring of formal power series in $w_{(1)}, ..., w_{(n)}$ over the polynomial ring ${\Bbb Z}_2[b_1, b_2, ...]$ in a countable number of indeterminates $b_1, b_2, ...$. Then tom Dieck defines a ring homomorphism
$$\gamma: {\Bbb Z}_2[U_A]\longrightarrow K$$
into the quotient field $K$ of $L$ by mapping $U_A$ to
$${1\over {w_A}}(1+b_1w_A+b_2w_A^2+b_3w_A^3+\cdots)$$
where $w_A=\sum_{j\in A}w_{(j)}$. Let $M^m$ be a compact closed smooth $G_n$-manifold with fixed point set consisting only of isolated points $p_1, ...,  p_r$. Let
$$T_s=\bigoplus_AU_A^{m(A,s)}$$ be the tangential $G_n$-module at $p_s$. Let $\varphi[M^m]\in {\Bbb Z}_2[U_A]$ be the element
$$\sum_{s=1}^r\prod_AU_A^{m(A,s)}, $$
which is called {\em geometric}. The geometric elements form a subring of  ${\Bbb Z}_2[U_A]$. Now the existence theorem of tom Dieck may be stated as follows.
 \begin{thm} [tom Dieck~\cite{d}]\label{dieck}  An element $x\in {\Bbb Z}_2[U_A]$ is geometric if and only if $\gamma (x)$ is contained in $L$.
 \end{thm}

\subsection{A simple proof of Theorem~\ref{dks}}
Now let us show how to get our Theorem 2.2 from the existence theorem of tom Dieck.
\vskip .2cm
Since each $U_A$ is regarded as a non-trivial irreducible real
$G_n$-modules,  we see that ${\Bbb Z}_2[U_A, \emptyset]$ is exactly identified with $\mathcal{R}_*(G_n)=\sum_{m\geq 0}\mathcal{R}_m(G_n)$. Therefore,  the above map $\varphi$ is actually the Stong monomorphism $$\phi_*: \mathcal{Z}_*(G_n)\longrightarrow \mathcal{R}_*(G_n).$$ On the other hand,
we see that the $w_A$ actually corresponds to the  equivariant Euler class of the  $U_A$, so it belongs to $H^1(BG_n; {\Bbb Z}_2)$. Thus,
$$L={\Bbb Z}_2[b_1, b_2, ...][[w_{(1)}, ..., w_{(n)}]]$$
can be regarded as
$$H^*(BG_n; {\Bbb Z}_2)[b_1, b_2, ...].$$
Furthermore, $$\gamma: {\Bbb Z}_2[U_A]\longrightarrow K$$
can be regarded as
$$\gamma: \mathcal{R}_*(G_n)\longrightarrow S^{-1}H^*(BG_n; {\Bbb Z}_2)[b_1, b_2, ...]$$
where $S=H^1(BG_n;{\Bbb Z}_2)\setminus\{0\}$.
Now, let $\tau_1+\cdots+\tau_r$ be an element in $\mathcal{R}_m(G_n)$. Then by Theorem~\ref{dieck}, we have that  $x=\tau_1+\cdots+\tau_r\in \text{Im} \phi_m$ if and only if $\gamma(x)\in H^*(BG_n; {\Bbb Z}_2)[b_1, b_2, ...].$

\vskip .2cm
Write $\tau_s=\prod_AU_A^{m(A,s)}$ which is $m$-dimenional. Then $\tau_1+\cdots+\tau_r=\sum_{s=1}^r\prod_AU_A^{m(A,s)}$.
We note that $\gamma$ is a ring homomorphism. Thus
$$\gamma(\tau_1+\cdots+\tau_r)=\sum_{s=1}^r\prod_A\gamma(U_A^{m(A,s)}).$$
For each $\tau_s$, write $\chi^{G_n}(\tau_s)=t_{s, 1}\cdots t_{s, m}$ (i.e.,  the equivariant Euler class of $\tau_s$),  where $t_{s, j}\in H^1(BG_n;{\Bbb Z}_2)$.
Then we have that
\begin{align*}
\gamma(\tau_s) &=\prod_{j=1}^m{1\over {t_{s,j}}}(1+t_{s,j}b_1+t_{s,j}^2b_2+t_{s,j}^3b_3+\cdots)\\
&={1\over {t_{s, 1}\cdots t_{s, m}}}\{1+(t_{s, 1}+\cdots+ t_{s, m})b_1+(t_{s,1}t_{s,2}+\cdots+t_{s, m-1}t_{s, m})b_1^2\\
&\ \ +(t^2_{s, 1}+\cdots+ t^2_{s, m})b_2+\cdots\}\\
&={1\over {\chi^{({\Bbb Z}_2)^n}(\tau_s)}}\sum_{\omega=(i_1, ..., i_u) \atop u\leq m}S_\omega(t_{s,1},...,t_{s,m})b_\omega  \end{align*}
where $\omega=(i_1, ..., i_u)$ is a partition of $|\omega|=i_1+\cdots+i_u$,
$S_\omega(t_{s,1},...,t_{s,m})=\sum t_{s,1}^{i_1}\cdots t_{s,u}^{i_u}$ denotes the usual smallest symmetric polynomial containing the given monomial, and $b_\omega=\prod_{j=1}^ub_{i_j}$. Therefore,
\begin{align*}
\gamma(x) &=\gamma(\tau_1+\cdots+\tau_r)\\
&=\sum_{s=1}^r{1\over {\chi^{({\Bbb Z}_2)^n}(\tau_s)}}\sum_{\omega=(i_1, ..., i_u) \atop u\leq m}S_\omega(t_{s,1},...,t_{s,m})b_\omega.  \end{align*}
This induces that $\gamma(x)\in H^*(BG_n; {\Bbb Z}_2)[b_1, b_2, ...]$ if and only if for all partitions $\omega=(i_1, ..., i_u)$ with $u\leq m$,
$$\sum_{s=1}^r{{S_\omega(t_{s,1},...,t_{s,m})}\over {\chi^{G_n}(\tau_s)}}\in H^*(BG_n; {\Bbb Z}_2).$$
It is well-known (see also \cite{ms1}) that the polynomial
sub-algebra of ${\Bbb Z}_2[x_1, ...,  x_m]$ generated by all
$S_\omega(x_1, ..., x_m), |\omega|\geq 0$ is identified with the
sub-algebra generated by all elementary symmetric polynomial
functions $\sigma_i(x_1, ..., x_m),  0\leq i\leq m$. It follows that
$\gamma(x)\in H^*(BG_n; {\Bbb Z}_2)[b_1, b_2, ...]$ if and only if
for all symmetric polynomial functions $f(x_1, ..., x_m)$ over
${\Bbb Z}_2$,
$$\sum_{s=1}^r{{f(t_{s,1},...,t_{s,m})}\over {\chi^{G_n}(\tau_s)}}\in H^*(BG_n; {\Bbb Z}_2).$$
Combining with the above arguments, we obtain the required statement of  Theorem 2.2,
 which says that $\tau_1+\cdots+\tau_r\in \text{Im} \phi_m$ if and only if for all symmetric polynomial functions $f(x_1, ..., x_m)$ over ${\Bbb Z}_2$,
$$\sum_{s=1}^r{{f(t_{s,1},...,t_{s,m})}\over {\chi^{G_n}(\tau_s)}}\in H^*(BG_n; {\Bbb Z}_2).$$
This completes the proof.
\hfill$\Box$

\section{Appendix--An algorithm to determine a basis  of $\mathcal{V}_4^*$}\label{computer}

Here we introduce an algorithm how to produce a basis of
$\mathcal{V}_4^*$. As shown in the proof of Proposition~\ref{dim-4},
to get a basis of $\mathcal{V}_4^*$, we need to determine a maximal
linearly independent subset of  those polynomials of two types
$f_1=d\big(s_1s_2(s_1+s_2)\big)d\big(s_3s_4(s_3+s_4)\big)$ and
$f_2=d\big(s_1s_2(s_1+s_2)\big)d\big(s_3s_4(s_1+s_3+s_4)\big)$ in
$\mathcal{W}_4$.

\vskip .1cm Regarded each monomial of $\mathcal{W}_4$ as a basis of
$(\Z_2)^4$ (as a 4-dimensional linear space over $\Z_2$). Then
$\mathcal{W}_4$ is a 840-dimensional linear space generated by all
bases of $(\Z_2)^4$, and each polynomial of type $f_i$ is a linear
combination of 9 bases of $(\Z_2)^4$.
Now our algorithm is divided into the following four steps:
\begin{enumerate}
\item[Step 1.]  Fixed an ordered basis of  $\mathcal{W}_4$, which can be written as a $4\times (840\times 4)$-matrix $A$ such that for each $i=1, 2, ..., 840$, the collection of the $(4i-3)$-th, $(4i-2)$-th, $(4i-1)$-th, $(4i)$-th columns  in $A$ forms a basis of $(\Z_2)^4$.

\item[Step 2.] Compute
out 840-tuple coordinate vectors  of all polynomials of types $f_1$
and $f_2$ relative to this ordered basis  of $\mathcal{W}_4$, where each coordinate vector has
only 9 nonzero coordinates 1. Then we use these coordinate vectors as rows
to produce a matrix $B$.
\item[Step 3.] Do elementary column operations on $B$ to determine the rank of $B$ which is exactly the dimension of $\mathcal{V}_4^*$, and choose a maximal
linearly independent subset from all rows of $B$, which gives a matrix $L$.
\item[Step 4.] Compare the matrix $A$ and the  matrix $L$, we can read out the required basis of $\mathcal{V}_4^*$.
\end{enumerate}

Based upon this algorithm, we can write a computer program to find a basis of $\mathcal{V}_4^*$, which exactly consists of 510 polynomials,  listed as follows: {\tiny

\begin{center}
\begin{tabular}{|c|c|c|c|c|c|}
\hline &&&&&\\
$1$ & $(1;14; 15; 6; 11; 13)$ & $(1;14; 15; 7; 10; 13)$ & $(1; 14; 15; 4; 9; 13)$ & $(1; 14; 15; 5; 8; 13)$ & $(1; 14; 15; 7; 11; 12)$\\
\hline
$2$ & $(1; 14; 15; 6; 10; 12)$ & $(1; 14; 15; 5; 9; 12)$ & $(1; 14; 15; 4; 8; 12)$ & $(1; 14; 15; 2; 9; 11)$ & $(1; 14; 15; 3; 8; 11)$ \\
\hline $3$ &
$(1; 14; 15; 3; 9; 10)$ & $(1; 14; 15; 2; 8; 10)$ & $(1; 14; 15; 2; 5; 7)$ & $(1; 14; 15; 3; 4; 7)$ & $(1; 14; 15; 3; 5; 6)$\\
\hline
$4$ & $(1; 14; 15; 2; 4; 6)$ & $(2; 13; 15; 7; 11; 12)$ & $(2; 13; 15; 6; 10; 12)$ & $(2; 13; 15; 5; 9; 12)$ & $(2; 13; 15; 4; 8; 12)$\\
\hline
$5$ & $(1; 10; 11; 2; 13; 15)$ & $(2; 13; 15; 3; 8; 11)$ & $(2; 13; 15; 5; 11; 14)$ & $(2; 13; 15; 3; 9; 10)$ & $(2; 13; 15; 4; 10; 14)$\\
\hline
$6$ & $(1; 8; 9; 2; 13; 15)$ & $(2; 13; 15; 7; 9; 14)$ & $(2; 13; 15; 6; 8; 14)$ & $(1; 6; 7; 2; 13; 15)$ & $(2; 13; 15; 3; 4; 7)$\\
\hline
$7$ & $(2; 13; 15; 3; 5; 6)$ & $(1; 4; 5; 2; 13; 15)$ & $(1; 10; 11; 3; 12; 15)$ & $(2; 9; 11; 3; 12; 15)$ & $(3; 12; 15; 6; 11; 13)$\\
\hline
$8$ & $(3; 12; 15; 5; 11; 14)$ & $(2; 8; 10; 3; 12; 15)$ & $(3; 12; 15; 7; 10; 13)$ & $(3; 12; 15; 4; 10; 14)$ & $(1; 8; 9; 3; 12; 15)$\\
\hline
$9$ & $(3; 12; 15; 7; 9; 14)$ & $(3; 12; 15; 4; 9; 13)$ & $(3; 12; 15; 6; 8; 14)$ & $(3; 12; 15; 5; 8; 13)$ & $(1; 6; 7; 3; 12; 15)$\\
\hline
$10$ & $(2; 5; 7; 3; 12; 15)$ & $(2; 4; 6; 3; 12; 15)$ & $(1; 4; 5; 3; 12; 15)$ & $(3; 9; 10; 4; 11; 15)$ & $(2; 8; 10; 4; 11; 15)$\\
\hline
$11$ & $(4; 11; 15; 7; 10; 13)$ & $(4; 11; 15; 6; 10; 12)$ & $(1; 8; 9; 4; 11; 15)$ & $(4; 11; 15; 7; 9; 14)$ & $(4; 11; 15; 5; 9; 12)$\\
\hline
$12$ & $(4; 11; 15; 6; 8; 14)$ & $(4; 11; 15; 5; 8; 13)$ & $(1; 6; 7; 4; 11; 15)$ & $(2; 5; 7; 4; 11; 15)$ & $(3; 5; 6; 4; 11; 15)$\\
\hline
$13$ & $(1; 2; 3; 4; 11; 15)$ & $(1; 8; 9; 5; 10; 15)$ & $(5; 10; 15; 7; 9; 14)$ & $(4; 9; 13; 5; 10; 15)$ &$(2; 9; 11; 5; 10; 15)$\\
\hline
$14$ & $(5; 10; 15; 6; 8; 14)$ & $(4; 8; 12; 5; 10; 15)$ & $(3; 8; 11; 5; 10; 15)$ & $(1; 6; 7; 5; 10; 15)$ &$(3; 4; 7; 5; 10; 15)$\\
\hline
$15$ & $(2; 4; 6; 5; 10; 15)$ & $(1; 2; 3; 5; 10; 15)$ &$(5; 8; 13; 6; 9; 15)$ &$(4; 8; 12; 6; 9; 15)$ &$(3; 8; 11; 6; 9; 15)$\\
\hline
$16$ & $(2; 8; 10; 6; 9; 15)$ &$(2; 5; 7; 6; 9; 15)$ &$(3; 4; 7; 6; 9; 15)$ &$(1; 4; 5; 6; 9; 15)$ &$(1; 2; 3; 6; 9; 15)$\\
\hline
$17$ & $(3; 5; 6; 7; 8; 15)$ &$(2; 4; 6; 7; 8; 15)$ &$(1; 4; 5; 7; 8; 15)$ &$(1; 2; 3; 7; 8; 15)$ &$(3; 13; 14; 7; 11; 12)$\\
\hline
$18$ & $(3; 13; 14; 6; 10; 12)$ &$(3; 13; 14; 5; 9; 12)$ &$(3; 13; 14; 4; 8; 12)$ &$(1; 10; 11; 3; 13; 14)$ &$(2; 9; 11; 3; 13; 14)$\\
\hline
$19$ & $(3; 13; 14; 4; 11; 15)$ &$(2; 8; 10; 3; 13; 14)$ &$(3; 13; 14; 5; 10; 15)$ &$(1; 8; 9; 3; 13; 14)$ &$(3; 13; 14; 6; 9; 15)$\\
\hline
$20$ & $(3; 13; 14; 7; 8; 15)$ &$(1; 6; 7; 3; 13; 14)$ & $(2; 5; 7; 3; 13; 14)$ &$(2; 4; 6; 3; 13; 14)$ &$(1; 4; 5; 3; 13; 14)$\\
\hline
$21$ & $(1; 10; 11; 2; 12; 14)$ &$(2; 12; 14; 3; 8; 11)$ &$(2; 12; 14; 6; 11; 13)$ &$(2; 12; 14; 4; 11; 15)$ &$(2; 12; 14; 3; 9; 10)$\\
\hline
$22$ & $(2; 12; 14; 7; 10; 13)$ &$(2; 12; 14; 5; 10; 15)$ &$(1; 8; 9; 2; 12; 14)$ &$(2; 12; 14; 6; 9; 15)$ &$(2; 12; 14; 4; 9; 13)$\\
\hline
$23$ & $(2; 12; 14; 7; 8; 15)$ &$(2; 12; 14; 5; 8; 13)$ &$(1; 6; 7; 2; 12; 14)$ &$(2; 12; 14; 3; 4; 7)$ &$(2; 12; 14; 3; 5; 6)$\\
\hline
$24$ & $(1; 4; 5; 2; 12; 14)$ &$(3; 9; 10; 5; 11; 14)$ &$(2; 8; 10; 5; 11; 14)$ &$(5; 11; 14; 7; 10; 13)$ &$(5; 11; 14; 6; 10; 12)$\\
\hline
$25$ & $(1; 8; 9; 5; 11; 14)$ &$(5; 11; 14; 6; 9; 15)$ &$(4; 9; 13; 5; 11; 14)$ &$(5; 11; 14; 7; 8; 15)$ &$(4; 8; 12; 5; 11; 14)$\\
\hline
$26$ & $(1; 6; 7; 5; 11; 14)$ &$(3; 4; 7; 5; 11; 14)$ &$(2; 4; 6; 5; 11; 14)$ &$(1; 2; 3; 5; 11; 14)$ &$(1; 8; 9; 4; 10; 14)$\\
\hline
$27$ & $(4; 10; 14; 6; 9; 15)$ &$(4; 10; 14; 5; 9; 12)$ &$(2; 9; 11; 4; 10; 14)$ &$(4; 10; 14; 7; 8; 15)$ &$(4; 10; 14; 5; 8; 13)$\\
\hline
$28$ & $(3; 8; 11; 4; 10; 14)$ &$(1; 6; 7; 4; 10; 14)$ &$(2; 5; 7; 4; 10; 14)$ &$(3; 5; 6; 4; 10; 14)$ &$(1; 2; 3; 4; 10; 14)$\\
\hline
$29$ & $(5; 8; 13; 7; 9; 14)$ &$(4; 8; 12; 7; 9; 14)$ &$(3; 8; 11; 7; 9; 14)$ &$(2; 8; 10; 7; 9; 14)$ &$(3; 5; 6; 7; 9; 14)$\\
\hline
$30$ & $(2; 4; 6; 7; 9; 14)$ &$(1; 4; 5; 7; 9; 14)$ &$(1; 2; 3; 7; 9; 14)$ &$(2; 5; 7; 6; 8; 14)$ &$(3; 4; 7; 6; 8; 14)$\\
\hline
$31$ & $(1; 4; 5; 6; 8; 14)$ &$(1; 2; 3; 6; 8; 14)$ &$(1; 12; 13; 2; 9; 11)$ &$(1; 12; 13; 3; 8; 11)$ &$(1; 12; 13; 5; 11; 14)$\\
\hline
$32$ & $(1; 12; 13; 4; 11; 15)$ &$(1; 12; 13; 3; 9; 10)$ &$(1; 12; 13; 2; 8; 10)$ &$(1; 12; 13; 5; 10; 15)$ &$(1; 12; 13; 4; 10; 14)$\\
\hline
$33$ & $(1; 12; 13; 7; 9; 14)$ &$(1; 12; 13; 6; 9; 15)$ &$(1; 12; 13; 7; 8; 15)$ &$(1; 12; 13; 6; 8; 14)$ &$(1; 12; 13; 2; 5; 7)$\\
\hline
$34$ & $(1; 12; 13; 3; 4; 7)$ &$(1; 12; 13; 3; 5; 6)$ &$(1; 12; 13; 2; 4; 6)$ &$(3; 9; 10; 6; 11; 13)$ &$(2; 8; 10; 6; 11; 13)$\\
\hline
$35$ & $(5; 10; 15; 6; 11; 13)$ &$(4; 10; 14; 6; 11; 13)$ &$(1; 8; 9; 6; 11; 13)$ &$(6; 11; 13; 7; 9; 14)$ &$(5; 9; 12; 6; 11; 13)$\\
\hline
$36$ & $(6; 11; 13; 7; 8; 15)$ &$(4; 8; 12; 6; 11; 13)$ &$(2; 5; 7; 6; 11; 13)$ &$(3; 4; 7; 6; 11; 13)$ &$(1; 4; 5; 6; 11; 13)$\\
\hline
$37$ & $(1; 2; 3; 6; 11; 13)$ &$(1; 8; 9; 7; 10; 13)$ &$(6; 9; 15; 7; 10; 13)$ &$(5; 9; 12; 7; 10; 13)$ &$(2; 9; 11; 7; 10; 13)$\\
\hline
$38$ & $(6; 8; 14; 7; 10; 13)$ &$(4; 8; 12; 7; 10; 13)$ &$(3; 8; 11; 7; 10; 13)$ &$(3; 5; 6; 7; 10; 13)$ &$(2; 4; 6; 7; 10; 13)$\\
\hline
$39$ & $(1; 4; 5; 7; 10; 13)$ &$(1; 2; 3; 7; 10; 13)$ &$(4; 9; 13; 7; 8; 15)$ &$(4; 9; 13; 6; 8; 14)$ &$(3; 8; 11; 4; 9; 13)$\\
\hline
$40$ & $(2; 8; 10; 4; 9; 13)$ &$(1; 6; 7; 4; 9; 13)$ &$(2; 5; 7; 4; 9; 13)$ &$(3; 5; 6; 4; 9; 13)$ &$(1; 2; 3; 4; 9; 13)$\\
\hline
$41$ & $(1; 6; 7; 5; 8; 13)$ &$(3; 4; 7; 5; 8; 13)$ &$(2; 4; 6; 5; 8; 13)$ &$(1; 2; 3; 5; 8; 13)$ &$(3; 9; 10; 7; 11; 12)$\\
\hline
$42$ & $(2; 8; 10; 7; 11; 12)$ &$(5; 10; 15; 7; 11; 12)$ &$(4; 10; 14; 7; 11; 12)$ &$(1; 8; 9; 7; 11; 12)$ &$(6; 9; 15; 7; 11; 12)$\\
\hline
$43$ & $(4; 9; 13; 7; 11; 12)$ &$(6; 8; 14; 7; 11; 12)$ &$(5; 8; 13; 7; 11; 12)$ &$(3; 5; 6; 7; 11; 12)$ &$(2; 4; 6; 7; 11; 12)$\\
\hline
$44$ & $(1; 4; 5; 7; 11; 12)$ &$(1; 2; 3; 7; 11; 12)$ &$(1; 8; 9; 6; 10; 12)$ &$(6; 10; 12; 7; 9; 14)$ &$(4; 9; 13; 6; 10; 12)$\\
\hline
$45$ & $(2; 9; 11; 6; 10; 12)$ &$(6; 10; 12; 7; 8; 15)$ &$(5; 8; 13; 6; 10; 12)$ &$(3; 8; 11; 6; 10; 12)$ &$(2; 5; 7; 6; 10; 12)$\\
\hline
$46$ & $(1; 4; 5; 6; 10; 12)$ &$(1; 2; 3; 6; 10; 12)$ &$(5; 9; 12; 7; 8; 15)$ &$(3; 8; 11; 5; 9; 12)$ &$(2; 8; 10; 5; 9; 12)$\\
\hline
$47$ & $(1; 6; 7; 5; 9; 12)$ &$(1; 2; 3; 5; 9; 12)$ &$(1; 6; 7; 4; 8; 12)$ &$(1; 2; 3; 4; 8; 12)$ &$(1; 10; 11; 7; 9; 14)$\\
\hline
$48$ & $(1; 10; 11; 6; 9; 15)$ &$(1; 10; 11; 5; 9; 12)$ &$(1; 10; 11; 4; 9; 13)$ &$(1; 10; 11; 7; 8; 15)$ &$(1; 10; 11; 6; 8; 14)$\\
\hline
$49$ & $(1; 10; 11; 5; 8; 13)$ &$(1; 10; 11; 4; 8; 12)$ &$(1; 10; 11; 2; 5; 7)$ &$(1; 10; 11; 3; 4; 7)$ &$(1; 10; 11; 3; 5; 6)$\\
\hline
$50$ & $(1; 10; 11; 2; 4; 6)$ &$(2; 9; 11; 7; 8; 15)$ &$(2; 9; 11; 6; 8; 14)$ &$(1; 6; 7; 2; 9; 11)$ &$(2; 9; 11; 3; 4; 7)$\\
\hline
$51$ & $(2; 9; 11; 3; 5; 6)$ &$(1; 4; 5; 2; 9; 11)$ &$(1; 6; 7; 3; 8; 11)$ &$(2; 5; 7; 3; 8; 11)$ &$(2; 4; 6; 3; 8; 11)$\\
\hline
$52$ & $(1; 4; 5; 3; 8; 11)$ &$(3; 9; 10; 7; 8; 15)$ &$(1; 6; 7; 3; 9; 10)$ &$(1; 6; 7; 2; 8; 10)$ &$(1; 8; 9; 2; 5; 7)$\\
\hline
$53$ & $(1; 14; 15; 9; 11; 13)$ &$(1; 14; 15; 8; 10; 13)$ &$(1; 14; 15; 5; 7; 13)$ &$(1; 14; 15; 4; 6; 13)$ &$(1; 14; 15; 8; 11; 12)$\\
\hline
$54$ & $(1; 14; 15; 9; 10; 12)$ &$(1; 14; 15; 4; 7; 12)$ &$(1; 14; 15; 5; 6; 12)$ &$(1; 14; 15; 3; 7; 11)$ &$(1; 14; 15; 2; 6; 11)$\\
\hline
$55$ & $(1; 14; 15; 2; 7; 10)$ &$(1; 14; 15; 3; 6; 10)$ &$(2; 13; 15; 8 11 12)$ &$(2; 13; 15; 9; 10; 12)$ &$(2; 13; 15; 4; 7; 12)$\\
\hline
$56$ & $(2; 13; 15; 5; 6; 12)$ &$(2; 13; 15; 10; 11; 14)$ &$(2; 13; 15; 3; 7; 11)$ &$(2; 13; 15; 1; 5; 11)$ &$(2; 13; 15; 3; 6; 10)$\\
\hline
$57$ & $(2; 13; 15; 1; 4; 10)$ &$(2; 13; 15; 8; 9; 14)$ &$(2; 13; 15; 1; 7; 9)$ &$(2; 13; 15; 1; 6; 8)$ &$(3; 12; 15; 10; 11; 14)$\\
\hline
$58$ & $(3; 12; 15; 9; 11; 13)$ &$(3; 12; 15; 2; 6; 11)$ &$(3; 12; 15; 1; 5; 11)$ &$(3; 12; 15; 8; 10; 13)$ &$(3; 12; 15; 2; 7; 10)$\\
\hline
$59$ & $(3; 12; 15; 1; 4; 10)$ &$(3; 12; 15; 8; 9; 14)$ &$(3; 12; 15; 1; 7; 9)$ &$(3; 12; 15; 2; 4; 9)$ &$(3; 12; 15; 1; 6; 8)$\\
\hline
$60$ & $(3; 12; 15; 2; 5; 8)$ &$(4; 11; 15; 9; 10; 12)$ &$(4; 11; 15; 8; 10; 13)$ &$(4; 11; 15; 2; 7; 10)$ &$(4; 11; 15; 3; 6; 10)$\\
\hline
$61$ & $(4; 11; 15; 8; 9; 14)$ &$(4; 11; 15; 1; 7; 9)$ &$(4; 11; 15; 3; 5; 9)$ &$(4; 11; 15; 1; 6; 8)$ &$(4; 11; 15; 2; 5; 8)$\\
\hline
$62$ & $(4; 11; 15; 2; 3; 14)$ &$(4; 11; 15; 1; 3; 13)$ &$(4; 11; 15; 1; 2; 12)$ &$(5; 10; 15; 8; 9; 14)$ &$(5; 10; 15; 1; 7; 9)$\\
\hline
$63$ & $(5; 10; 15; 2; 4; 9)$ &$(5; 10; 15; 1; 6; 8)$ &$(5; 10; 15; 3; 4; 8)$ &$(5; 10; 15; 4; 7; 12)$ &$(5; 10; 15; 3; 7; 11)$\\
\hline
$64$ & $(5; 10; 15; 4; 6; 13)$ &$(5; 10; 15; 2; 6; 11)$ &$(5; 10; 15; 2; 3; 14)$ &$(5; 10; 15; 1; 3; 13)$ &$(5; 10; 15; 1; 2; 12)$\\
\hline
$65$ & $(6; 9; 15; 2; 5; 8)$ &$(6; 9; 15; 3; 4; 8)$ &$(6; 9; 15; 5; 7; 13)$ &$(6; 9; 15; 4; 7; 12)$ &$(6; 9; 15; 3; 7; 11)$\\
\hline
$66$ & $(6; 9; 15; 2; 7; 10)$ &$(6; 9; 15; 4; 5; 14)$ &$(6; 9; 15; 1; 5; 11)$ &$(6; 9; 15; 1; 4; 10)$ &$(6; 9; 15; 2; 3; 14)$\\
\hline
$67$ & $(6; 9; 15; 1; 3; 13)$ &$(6; 9; 15; 1; 2; 12)$ &$(7; 8; 15; 5; 6; 12)$ &$(7; 8; 15; 4; 6; 13)$ &$(7; 8; 15; 3; 6; 10)$\\
\hline
$68$ & $(7; 8; 15; 2; 6; 11)$ &$(7; 8; 15; 4; 5; 14)$ &$(7; 8; 15; 3; 5; 9)$ &$(7; 8; 15; 1; 5; 11)$ &$(7; 8; 15; 2; 4; 9)$\\
\hline
$69$ & $(7; 8; 15; 1; 4; 10)$ &$(7; 8; 15; 2; 3; 14)$ &$(7; 8; 15; 1; 3; 13)$ &$(7; 8; 15; 1; 2; 12)$ &$(3; 13; 14; 9; 11; 12)$\\
\hline
$70$ & $(3; 13; 14; 8; 10; 12)$ &$(3; 13; 14; 5; 7; 12)$ &$(3; 13; 14; 4; 6; 12)$ &$(3; 13; 14; 10; 11; 15)$ &$(3; 13; 14; 2; 7; 11)$\\
\hline
\end{tabular}
\end{center}

\begin{center}
\begin{tabular}{|c|c|c|c|c|c|}
\hline &&&&&\\
$71$ & $(3; 13; 14; 1; 4; 11)$ &$(3; 13; 14; 2; 6; 10)$ &$(3; 13; 14; 1; 5; 10)$ &$(3; 13; 14; 8; 9; 15)$ &$(3; 13; 14; 1; 6; 9)$\\
\hline
$72$ & $(3; 13; 14; 1; 7; 8)$ &$(2; 12; 14; 10; 11; 15)$ &$(2; 12; 14; 8; 11; 13)$ &$(2; 12; 14; 3; 6; 11)$ &$(2; 12; 14; 1; 4; 11)$\\
\hline
$73$ & $(2; 12; 14; 1; 5; 10)$ &$(2; 12; 14; 8; 9; 15)$ &$(2; 12; 14; 1; 6; 9)$ &$(2; 12; 14; 1; 7; 8)$ &$(5; 11; 14; 9; 10; 13)$\\
\hline
$74$ & $(5; 11; 14; 8; 10; 12)$ &$(5; 11; 14; 3; 7; 10)$ &$(5; 11; 14; 2; 6; 10)$ &$(5; 11; 14; 8; 9; 15)$ &$(5; 11; 14; 1; 6; 9)$\\
\hline
$75$ & $(5; 11; 14; 3; 4; 9)$ &$(5; 11; 14; 1; 7; 8)$ &$(5; 11; 14; 2; 4; 8)$ &$(5; 11; 14; 2; 3; 15)$ &$(5; 11; 14; 1; 3; 12)$\\
\hline
$76$ & $(5; 11; 14; 1; 2; 13)$ &$(4; 10; 14; 8; 9; 15)$ &$(4; 10; 14; 1; 6; 9)$ &$(4; 10; 14; 2; 5; 9)$ &$(4; 10; 14; 1; 7; 8)$\\
\hline
$77$ & $(4; 10; 14; 2; 7; 11)$ &$(4; 10; 14; 2; 3; 15)$ &$(4; 10; 14; 1; 3; 12)$ &$(4; 10; 14; 1; 2; 13)$ &$(7; 9; 14; 3; 5; 8)$\\
\hline
$78$ & $(7; 9; 14; 2; 4; 8)$ &$(7; 9; 14; 5; 6; 13)$ &$(7; 9; 14; 4; 6; 12)$ &$(7; 9; 14; 3; 6; 11)$ &$(7; 9; 14; 2; 6; 10)$\\
\hline
$79$ & $(7; 9; 14; 4; 5; 15)$ &$(7; 9; 14; 1; 5; 10)$ &$(7; 9; 14; 1; 4; 11)$ &$(7; 9; 14; 2; 3; 15)$ &$(7; 9; 14; 1; 3; 12)$\\
\hline
$80$ & $(7; 9; 14; 1; 2; 13)$ &$(6; 8; 14; 5; 7; 12)$ &$(6; 8; 14; 4; 5; 15)$ &$(6; 8; 14; 2; 5; 9)$ &$(6; 8; 14; 1; 5; 10)$\\
\hline
$81$ & $(6; 8; 14; 1; 4; 11)$ &$(6; 8; 14; 2; 3; 15)$ &$(6; 8; 14; 1; 3; 12)$ &$(6; 8; 14; 1; 2; 13)$ &$(1; 12; 13; 9; 11; 15)$\\
\hline
$82$ & $(1; 12; 13; 8; 11; 14)$ &$(1; 12; 13; 3; 5; 11)$ &$(1; 12; 13; 2; 4; 11)$ &$(1; 12; 13; 9; 10; 14)$ &$(1; 12; 13; 8; 10; 15)$\\
\hline
$83$ & $(1; 12; 13; 2; 5; 10)$ &$(1; 12; 13; 3; 4; 10)$ &$(1; 12; 13; 3; 7; 9)$ &$(1; 12; 13; 2; 6; 9)$ &$(1; 12; 13; 2; 7; 8)$\\
\hline
$84$ & $(1; 12; 13; 3; 6; 8)$ &$(6; 11; 13; 9; 10; 14)$ &$(6; 11; 13; 8; 10; 15)$ &$(6; 11; 13; 2; 5; 10)$ &$(6; 11; 13; 3; 4; 10)$\\
\hline
$85$ & $(6; 11; 13; 8; 9; 12)$ &$(6; 11; 13; 3; 7; 9)$ &$(6; 11; 13; 1; 5; 9)$ &$(6; 11; 13; 2; 7; 8)$ &$(6; 11; 13; 1; 4; 8)$\\
\hline
$86$ & $(6; 11; 13; 2; 3; 12)$ &$(6; 11; 13; 1; 3; 15)$ &$(6; 11; 13; 1; 2; 14)$ &$(7; 10; 13; 8; 9; 12)$ &$(7; 10; 13; 2; 6; 9)$\\
\hline
$87$ & $(7; 10; 13; 1; 5; 9)$ &$(7; 10; 13; 3; 6; 8)$ &$(7; 10; 13; 1; 4; 8)$ &$(7; 10; 13; 5; 6; 14)$ &$(7; 10; 13; 4; 6; 15)$\\
\hline
$88$ & $(7; 10; 13; 3; 5; 11)$ &$(7; 10; 13; 2; 4; 11)$ &$(7; 10; 13; 2; 3; 12)$ &$(7; 10; 13; 1; 3; 15)$ &$(7; 10; 13; 1; 2; 14)$\\
\hline
$89$ & $(4; 9; 13; 2; 7; 8)$ &$(4; 9; 13; 2; 3; 12)$ &$(4; 9; 13; 1; 3; 15)$ &$(4; 9; 13; 1; 2; 14)$ &$(5; 8; 13; 6; 7; 12)$\\
\hline
$90$ & $(5; 8; 13; 1; 3; 15)$ &$(5; 8; 13; 1; 2; 14)$ &$(7; 11; 12; 9; 10; 15)$ &$(7; 11; 12; 8; 10; 14)$ &$(7; 11; 12; 3; 5; 10)$\\
\hline
$91$ & $(7; 11; 12; 2; 4; 10)$ &$(7; 11; 12; 8; 9; 13)$ &$(7; 11; 12; 3; 6; 9)$ &$(7; 11; 12; 1; 4; 9)$ &$(7; 11; 12; 2; 6; 8)$\\
\hline
$92$ & $(7; 11; 12; 1; 5; 8)$ &$(7; 11; 12; 2; 3; 13)$ &$(7; 11; 12; 1; 3; 14)$ &$(7; 11; 12; 1; 2; 15)$ &$(6; 10; 12; 8; 9; 13)$\\
\hline
$93$ & $(6; 10; 12; 2; 7; 9)$ &$(6; 10; 12; 1; 4; 9)$ &$(6; 10; 12; 2; 5; 11)$ &$(6; 10; 12; 2; 3; 13)$ &$(6; 10; 12; 1; 3; 14)$\\
\hline
$94$ & $(6; 10; 12; 1; 2; 15)$ &$(5; 9; 12; 2; 3; 13)$ &$(5; 9; 12; 1; 3; 14)$ &$(5; 9; 12; 1; 2; 15)$ &$(1; 10; 11; 5; 7; 9)$\\
\hline
$95$ & $(1; 10; 11; 4; 6; 9)$ &$(1; 10; 11; 4; 7; 8)$ &$(1; 10; 11; 5; 6; 8)$ &$(1; 10; 11; 3; 7; 15)$ &$(1; 10; 11; 2; 7; 14)$\\
\hline
$96$ & $(1; 10; 11; 3; 6; 14)$ &$(1; 10; 11; 2; 6; 15)$ &$(1; 10; 11; 3; 5; 13)$ &$(1; 10; 11; 2; 5; 12)$ &$(1; 10; 11; 3; 4; 12)$\\
\hline
$97$ & $(1; 10; 11; 2; 4; 13)$ &$(2; 9; 11; 6; 7; 10)$ &$(2; 9; 11; 1; 7; 13)$ &$(2; 9; 11; 3; 6; 14)$ &$(2; 9; 11; 1; 6; 12)$\\
\hline
$98$ & $(2; 9; 11; 4; 5; 10)$ &$(2; 9; 11; 3; 5; 13)$ &$(2; 9; 11; 1; 5; 15)$ &$(2; 9; 11; 3; 4; 12)$ &$(2; 9; 11; 1; 4; 14)$\\
\hline
$99$ & $(3; 8; 11; 2; 6; 15)$ &$(3; 8; 11; 1; 5; 15)$ &$(3; 8; 11; 2; 4; 13)$ &$(3; 8; 11; 1; 4; 14)$ &$(7; 11; 12; 5; 6; 8)$\\
\hline
$100$ & $(6; 11; 13; 4; 5; 10)$ &$(5; 11; 14; 3; 4; 12)$ &$(5; 11; 14; 1; 3; 9)$ &$(4; 11; 15; 2; 3; 10)$ &$(4; 11; 15; 1; 3; 9)$\\
\hline
$101$ & $(4; 11; 15; 1; 2; 8)$ &$(3; 9; 10; 5; 7; 8)$ &$(3; 9; 10; 4; 6; 8)$ &$(3; 9; 10; 2; 7; 15)$ &$(3; 9; 10; 2; 6; 14)$\\
\hline
$102$ & $(2; 8; 10; 4; 7; 9)$ &$(2; 8; 10; 3; 7; 14)$ &$(7; 9; 14; 4; 5; 8)$ &$(7; 9; 14; 3; 5; 15)$ &$(7; 9; 14; 1; 5; 13)$\\
\hline
 \end{tabular}
\end{center}}
\noindent In the above table, each $(a; b; c; d; e; l)$ means a
polynomial $d(x_ax_bx_c)d(x_dx_ex_l)$ in $\mathcal{V}_4^*$, where
$x_1, x_2, ..., x_{15}$ denote the 15 nontrivial elements in
$\Hom(\Z_2,(\Z_2)^4)$ with
$$
\begin{cases}
x_1=\rho^*_1 & x_2=\rho^*_2 \\
 x_3=\rho^*_1+\rho^*_2 & x_4=\rho^*_3\\
x_5=\rho^*_1+\rho^*_3 &x_6=\rho^*_2+\rho^*_3 \\
x_7=\rho^*_1+\rho^*_2+\rho^*_3& x_8=\rho^*_4\\
x_9=\rho^*_1+\rho^*_4 &x_{10}=\rho^*_2+\rho^*_4\\
 x_{11}=\rho^*_1+\rho^*_2+\rho^*_4 &x_{12}=\rho^*_3+\rho^*_4\\
x_{13}=\rho^*_1+\rho^*_3+\rho^*_4 & x_{14}=\rho^*_2+\rho^*_3+\rho^*_4 \\
 x_{15}=\rho^*_1+\rho^*_2+\rho^*_3+\rho^*_4 & \text{$\{\rho^*_1, \rho^*_2, \rho^*_3, \rho^*_4\}$ is the standard basis of $\Hom(\Z_2,(\Z_2)^4)$.}
\end{cases}$$

\vskip .3cm

\noindent{\bf Acknowledgment.} The authors would like to express their gratitude to the referees, who did an extremely careful reading of the original version and many
valuable suggestions, comments and corrections.

\end{document}